\newtheorem{theorem}{Theorem}[section]
\newtheorem{lemma}[theorem]{Lemma}
\newtheorem{proposition}[theorem]{Proposition}
\theoremstyle{definition}
\newtheorem{definition}[theorem]{Definition}
\newtheorem{conjecture}[theorem]{Conjecture}
\newtheorem{remark}[theorem]{Remark}
\theoremstyle{definition}
\newcommand{\thistheoremname}{}
\newtheorem{genericthm}[theorem]{\thistheoremname}
\newtheorem*{genericthm*}{\thistheoremname}
\newenvironment{namedthm*}[1]
  {\renewcommand{\thistheoremname}{#1}%
   \begin{genericthm*}}
  {\end{genericthm*}}
\def\Vol{\mbox{\rm{Vol}}}
\newcommand\tsim{\kern-.4em\sim}
\newcommand{\Z}{\mathbb Z}
\newcommand{\HH}{\mathbb H}
\newcommand{\Mb}{M_{book}}
\newcommand{\sL}{{\mathcal L}}
\theoremstyle{empty}
\def\spl{\backslash\backslash}
\begin{document}

% Declarations for Front Matter

\title[Guts and The Minimal Volume Hyperbolic 3-Manifold with 3 Cusps]{Guts and The Minimal Volume Orientable Hyperbolic 3-Manifold with 3 Cusps}

 \author[Yue Zhang]{%
        Yue Zhang} 
\address{%
    University of California, Berkeley
} 
\email{% 
     yue\_zhang@berkeley.edu}

\maketitle
% Delete (or comment out) the \approvalpage line for the final version.
%\approvalpage

\begin{abstract}
The minimal volume of orientable hyperbolic manifolds with a given number of cusps has been found for $0,1,2,4$ cusps, while the minimal volume of 3-cusped orientable hyperbolic manifolds remains unknown. By using guts in sutured manifolds and pared manifolds, we are able to show that for an orientable hyperbolic 3-manifold with 3 cusps such that every second homology class is libroid, its volume is at least $5.49\ldots = 6 \times $Catalan’s constant.

\end{abstract}

\section{Introduction}
Jorgensen and Thurston \cite[Section 6.6]{thurston1979geometry} proved that the volumes of hyperbolic 3-manifolds are well-ordered. Moreover, if a volume is an $n$-fold limit point of smaller volumes (of order type $\omega^n$), then there is a corresponding hyperbolic manifold of finite volume with precisely $n$ orientable cusps. This gives rise to the problem of determining the minimal volume orientable hyperbolic 3-manifolds with n cusps.

When $n=0$ (closed manifolds), Gabai, Meyerhoff and Milley  \cite{gabai2009minimum} identified the smallest volume orientable hyperbolic 3-manifold to be the Fomenko-Matveev-Weeks manifold with volume $0.94\ldots$.

When $n=1$, Cao and Meyerhoff \cite{cao2001orientable} showed that the figure-eight knot complement and the manifold obtained by the $(5,1)$-Dehn filling from the Whitehead link complement have the minimal volume. Their volume is $2.02... = 2 V_3$, where $V_3$ is the volume of the ideal regular tetrahedron.

When $n=2$, Agol \cite{agol2010minimal} found that the Whitehead link complement and the $(- 2, 3, 8)$ pretzel link complement are the minimal volume orientable hyperbolic 3-manifolds with two cusps, with volume $V_8 = 3.66\ldots = 4 \times K$, where $K$ is Catalan's constant
 $$K= 1-1/9 + 1/25 -1/49 + \cdots + (-1)^n/ (2n+1)^2+ \cdots.$$ 
$V_8$ is also the volume of a regular ideal octahedron in $\HH^3$.

When $n=4$, Yoshida \cite{yoshida2013minimal} showed that the $8^4_2$ link complement is the unique minimal volume orientable hyperbolic manifold with $4$ cusps.

In the case of 2 and 4 cusps, Agol and Yoshida used guts of pared manifolds to estimate the volume.

However the minimal volume orientable hyperbolic 3-manifolds with 3 cusps remain unknown. Here is a conjecture about the minimal volume.

\begin{conjecture}
	The minimal volume of 3-cusped orientable hyperbolic manifold is the volume of the 3-chain link complement, which is $5.33\ldots$.
	
\end{conjecture}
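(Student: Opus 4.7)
The plan is to combine Agol's guts-of-pared-manifolds technique with Miyamoto's volume inequality in a way that upgrades the author's libroid-case bound of $5.49\ldots$ down to the conjectured $5.33\ldots$. Let $M$ be an orientable hyperbolic 3-manifold with exactly three cusps. First I would fix the parabolic locus $P$ consisting of the three cusp tori and try to produce a taut, $\pi_1$-injective, properly embedded surface $S$ realizing the Thurston norm of a well-chosen nonzero class in $H_2(M,\partial M)$. Sutured-manifold decomposition of the pared manifold $(M,P)$ along $S$ then splits off a characteristic submanifold (interval bundles and Seifert pieces) from the \emph{guts} $\Gamma$, and $\Gamma$ admits a finite-volume hyperbolic structure with totally geodesic boundary satisfying $\vol(M)\ge \vol(\Gamma)$.

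The workhorse bound is Miyamoto's theorem: a hyperbolic 3-manifold with nonempty totally geodesic boundary and cusps satisfies $\vol(\Gamma)\ge -V_8\,\chi(\Gamma)/2$, where $V_8=4K=3.66\ldots$ is the volume of the regular ideal octahedron. To land on $5.33\ldots$ one therefore needs a decomposing surface whose guts satisfy $-\chi(\Gamma)\ge 2\cdot 5.33/V_8\approx 2.91$, i.e.\ $-\chi(\Gamma)\ge 3$. A plausible route is to begin with a Thurston-norm minimizer $S$ that meets all three cusps, use homological and parity arguments to force $-\chi(S)\ge 3$, and then argue that no portion of $S$ is absorbed into the characteristic submanifold.

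The main technical step is precisely this last point: ruling out windows. The author's libroid hypothesis is tailored to guarantee exactly this, so resolving the conjecture hinges on removing that hypothesis. I would attempt this in three stages: (i) classify the possible interval-bundle windows that can occur in a taut decomposition of a 3-cusped pared manifold, using the combinatorics of how a window's horizontal boundary sits in $S$ together with the constraint that $\partial M$ has only three torus components; (ii) for thin or exceptional windows, deploy a Dehn-filling or cusp-area estimate in the spirit of Futer--Kalfagianni--Purcell to bound the cusp geometry and hence the volume from below; (iii) for the small cases that survive, identify the manifold directly via an enumeration of low-volume 3-cusped census entries in SnapPy, and verify that the 3-chain link complement is indeed the minimizer.

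The hardest part will almost certainly be stage (i). A non-libroid class can admit a Thurston-norm minimizer that is genuinely eaten up by an interval bundle, and the $5.49\ldots$ bound in the paper already reflects this loss. Closing the $0.16$ gap to $5.33\ldots$ seems to require either (a) a selection procedure that replaces a non-libroid class by a \emph{different} homology class whose minimizer is libroid, or (b) a sharper Miyamoto-type inequality that credits the geometry of the windows rather than discarding them. Either route will need a genuinely new ingredient beyond the methods developed in the present paper, and I would expect most of the effort to go into making (a) work by an exchange argument on the unit ball of the Thurston norm.
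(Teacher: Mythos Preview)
This statement is a \emph{conjecture}; the paper does not prove it. The paper establishes the weaker bound $\Vol(M)\ge 1.5V_8=5.49\ldots$ under the libroid hypothesis (Theorem~\ref{thm:volumeofbook}), and then observes that the full conjecture would follow from the separate Conjecture~\ref{con:acylindricalsutured} on the minimal volume of acylindrical taut sutured manifolds. So there is no proof in the paper to compare against; I can only assess whether your outline is a viable route.

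There is a basic arithmetic obstruction that breaks your plan at the outset. You aim to force $-\chi(\Gamma)\ge 3$ and then invoke Miyamoto, but Miyamoto with $-\chi(\Gamma)\ge 3$ yields $\Vol(M)\ge \tfrac{3}{2}V_8=5.49\ldots$, not $5.33\ldots$. Since the $3$-chain link complement itself has volume $5.33\ldots<5.49\ldots$, no choice of essential surface in that manifold can have pared guts with $|\chi|\ge 3$; in fact the paper exhibits explicitly that cutting the $3$-chain link complement along a $3$-punctured sphere produces the acylindrical sutured manifold $(N_3,\gamma)$ of Figure~\ref{fig:acylindrical}, whose volume is exactly $5.33\ldots$. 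Thus the target bound cannot be reached through Miyamoto's inequality with integer Euler characteristic, and the scheme of ``get $-\chi(\Gamma)\ge 3$ and apply Miyamoto'' is not merely hard but impossible.

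You also have the role of the libroid hypothesis inverted. Libroid does \emph{not} mean ``no windows''; it means the sutured guts $\Gamma(z)$ consist only of the simple pieces of Proposition~\ref{prop:gutcomponentinhyperbolic} (solid tori and $T\times I$'s), i.e.\ there is no acylindrical taut sutured component. The paper's Theorem~\ref{thm:volumeofbook} already disposes of the libroid case with the stronger bound $5.49\ldots$. The outstanding case is precisely the \emph{non}-libroid one, where some $\Gamma(z)$ contains an acylindrical taut sutured piece; this is where the $3$-chain link complement lives. Resolving the conjecture therefore amounts to proving Conjecture~\ref{con:acylindricalsutured}, namely that $(N_3,\gamma)$ minimizes volume among acylindrical taut sutured manifolds. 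Your stages (i)--(iii), which are designed to eliminate interval-bundle windows, do not touch this case at all, and a Dehn-filling or cusp-area estimate of the type you propose will not pin down a sharp non-octahedral constant like $5.33\ldots$.
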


In this paper, we make some progress on this conjecture.

Let $M$ be an orientable hyperbolic 3-manifold with cusps and $z$ be an element in $H_2(M,\partial M)$.

Proposition \ref{prop:gutcomponentinhyperbolic} classifies sutured guts component of $\Gamma(z)$ defined in \cite{AZ1}. Each guts component is a $T\times I$ with a boundary component as a toral suture and two annuli on the other boundary component as annular sutures, a solid torus with 2 non-longitudinal sutures in the boundary, a solid torus with 4 longitudinal sutures in the boundary, or an acylindrical taut sutured manifold.

 We say $z$ is libroid if the sutured guts $\Gamma(z)$  consists of solid tori and $T\times I$'s. See Definition \ref{def:libroid}. If $z$ is libroid, there is a \emph{library bundle structure} of $M$ such that the spine is $\Gamma(z)$. By \emph{exchanging} guts components of $\Gamma(z)$, we are able to have two guts in the adjacent \emph{layers} \emph{intersect} each other. This provides some complicated pared guts in the manifold. See Section \ref{sec:Library Bundles} for definitions and Lemma \ref{lem:intersects} for a more precise statement.
 
 We also find the pared guts of library sutured manifolds with $2,3$ or 4 layers in Lemma \ref{lem:gutsoflibrary}.

By applying Lemma \ref{lem:intersects} and Lemma \ref{lem:gutsoflibrary} as well as techniques in \cite{agol2010minimal} and \cite{yoshida2013minimal}, we prove the following theorem in Section \ref{sec:volume:volumeoflibrary}.

\begin{theorem} \label{thm:2TIor4ST}
	Let $M$ be an orientable hyperbolic 3-manifold of finite volume with 3 cusps such that each class in $H_2(M,\partial M)$ is libroid. Let $z$ be a vertex of the Thurston sphere. If $\Gamma(z)$ contains two $T\times I$'s or 4-STs $G$ and $G'$ such that each suture of $G$ and $G'$ is homologically nontrivial and each suture of $G$ is not isotopic to any suture of $G'$, then $\Vol(M) \ge 1.5 V_8.$ 
\end{theorem}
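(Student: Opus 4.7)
The plan is to use the library bundle structure on $M$ guaranteed by the libroidness of every homology class, together with Lemma \ref{lem:intersects} and Lemma \ref{lem:gutsoflibrary}, to exhibit a pared submanifold of $M$ whose acylindrical part has enough Euler characteristic to yield the desired volume bound via the Agol--Storm--Thurston and Miyamoto inequalities.

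First I would fix a library bundle presentation of $M$ whose spine is the sutured guts $\Gamma(z)$, so that the hypothesized components $G$ and $G'$ both appear as components of the spine. Invoking Lemma \ref{lem:intersects}, I exchange guts components so that $G$ and $G'$ occupy adjacent layers of the library and intersect nontrivially; the non-isotopy hypothesis ensures that the exchange really does force a transverse intersection rather than identifying parallel sutures. The resulting configuration is a library sutured submanifold $L \subseteq M$ with at most four layers, to which Lemma \ref{lem:gutsoflibrary} directly applies.

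Second, I would read off the pared guts of $L$ from Lemma \ref{lem:gutsoflibrary}. The assumption that every suture of $G$ and $G'$ is homologically nontrivial prevents sutures from being capped off by compressing disks and absorbed into trivial pieces, while the non-isotopy assumption prevents adjacent annular pieces from pairing off into product regions after the exchange. Running through each admissible sub-case (two $T\times I$'s versus two 4-STs, with each compatible arrangement of sutures), one verifies that the pared guts has nonempty totally geodesic boundary $\partial_0\,\mathrm{guts}(L)$ of Euler characteristic at most $-3$.

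Finally, the Agol--Storm--Thurston inequality combined with Miyamoto's volume bound, applied in the pared form used in \cite{agol2010minimal} and \cite{yoshida2013minimal}, gives
$$\Vol(M) \;\ge\; -\frac{V_8}{2}\,\chi\bigl(\partial_0\,\mathrm{guts}(L)\bigr) \;\ge\; \frac{3\,V_8}{2} \;=\; 1.5\,V_8.$$
The main obstacle is the case analysis in the second step: one must show that after the forced intersection the annular or longitudinal sutures of $G$ and $G'$ always combine into a sutured structure whose acylindrical piece is complex enough, and does not collapse under further library exchanges into product regions, annuli, or once-punctured disks that would only contribute Euler characteristic $-1$ or $-2$. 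The homological nontriviality and non-isotopy hypotheses are precisely what rule out this degenerate behavior and force $|\chi(\partial_0\,\mathrm{guts}(L))| \ge 3$ uniformly across the cases.
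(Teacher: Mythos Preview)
Your outline captures the general strategy (library bundle, exchanges via Lemma~\ref{lem:intersects}, pared guts via Lemma~\ref{lem:gutsoflibrary}, then Theorem~\ref{thm:AST}), but the crucial claim that one always lands on a pared guts with $|\chi(\partial_0\,\mathrm{guts}(L))|\ge 3$ is not justified and is in fact false in the hardest subcase. When $G$ and $G'$ are both 4-STs, after exchanging so that they are adjacent and intersecting, Lemma~\ref{lem:gutsoflibrary}(1) produces pared guts of the form $X\times I$ where $X$ is the minimal essential subsurface containing $R_+(G)\cup R_-(G')$; here $\chi(X)=-1$ (a punctured torus) is entirely possible, yielding only $V_8$. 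The non-isotopy and homological-nontriviality hypotheses do \emph{not} by themselves force $\chi(X)\le -2$ in $M$. The paper handles this case (Lemma~\ref{lem:24ST}) by passing to the 2-fold cyclic cover $\tilde M$, setting up a four-layer library there, running a specific exchange procedure (Procedure~(E)) until it halts, and then analyzing four stopping cases plus a subsidiary lemma (Lemma~\ref{lem:minimalessentail}) to show $\Vol(\tilde M)\ge 3V_8$, whence $\Vol(M)\ge 1.5V_8$. This covering trick is the reason the bound is $1.5V_8$ rather than an integer multiple of $V_8$, and it is entirely absent from your proposal.

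Similarly, in the case where one of $G,G'$ is a $T\times I$ (Lemma~\ref{lem:2TI4ST}), the paper does not rely on the exchange-and-guts mechanism alone: it performs an iterated sequence of \emph{annular compressions} toward the toral cusp, producing surfaces $Y_1,Y_2,\ldots,Y_n$, and then invokes Yoshida's Theorem~\ref{thm:4cusps} about hyperbolic manifolds with geodesic boundary and at least four cusps to get $2V_8$. That mechanism is also missing from your sketch. In short, your second step asserts the conclusion of a substantial and delicate case analysis without supplying it; the homological hypotheses rule out certain degenerations but do not, as you claim, ``force $|\chi(\partial_0\,\mathrm{guts}(L))|\ge 3$ uniformly across the cases.''
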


We define doubled guts for $z$ as the \emph{horizontally prime guts} of the double of $\Gamma(z)$ in Definition \ref{def:doubledguts}. Let $\Delta$ be an open Thurston cone. We prove in Theorem \ref{thm:invariancedoubledguts} that if any element $z$ in $H_2(M,\partial M)$ is a libroid class and no element in $\Delta$ \emph{vanishes} (see Definition \ref{def:vanish}) on two boundary component, the doubled guts of elements in $\Delta$ remain invariant. This is a stronger result than \cite[Theorem 1.2]{AZ1} for sutured guts. 

By using doubled guts, we show that if $M$ is a hyperbolic 3-manifold with 3 cusps such that every element in $H_2(M,\partial M)$ is a libroid class and $Vol(M) < 2 V_8$, there is a vertex $z$ of the Thurston sphere that satisfies the condition of Theorem \ref{thm:2TIor4ST}. Hence we prove Theorem \ref{thm:volumeofbook}. 

\begin{theorem} \label{thm:volumeofbook}
	Let $M$ be an orientable hyperbolic 3-manifold of finite volume with 3 cusps such that every element in $H_2(M,\partial M)$ is a libroid class. Then $\Vol(M) \ge 1.5V_8.$
	\end{theorem}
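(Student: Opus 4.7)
The plan is to prove the theorem by contradiction/dichotomy: either the volume is already at least $2V_8 > 1.5 V_8$ and we are done, or $\Vol(M)<2V_8$, in which case we will locate a vertex $z$ of the Thurston norm sphere whose sutured guts $\Gamma(z)$ satisfies the hypothesis of Theorem \ref{thm:2TIor4ST}, and conclude $\Vol(M)\ge 1.5 V_8$ from that theorem.

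First I would set up the Thurston norm picture on $H_2(M,\partial M;\RR)$. Since $M$ has three cusps, the norm sphere is a $2$-dimensional polytope, and the top-dimensional open faces correspond to fibered (or at least taut) cones on which the sutured decomposition is combinatorially constant. Under the blanket libroid hypothesis, I can invoke Theorem \ref{thm:invariancedoubledguts}: on each open Thurston cone $\Delta$ in which no class vanishes on two boundary tori, the doubled guts are constant. I would then carefully distinguish cones in which some class does vanish on two boundary components (these classes correspond to surfaces that cap off two cusps, and one can control them directly because $M$ has only three cusps) from the generic cones where the invariance theorem applies.

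The next step is to extract combinatorial information at the vertices. Approaching a vertex $z_0$ from two adjacent open top cones $\Delta_1,\Delta_2$, the doubled guts of interior classes are each constant but may differ between $\Delta_1$ and $\Delta_2$. Using the libroid structure and the classification of sutured guts components from Proposition \ref{prop:gutcomponentinhyperbolic}, the guts of $z_0$ must decompose into solid tori (of 2-ST or 4-ST type) and $T\times I$ pieces whose annular/toral sutures reflect the vanishing of $z_0$ on the various boundary tori of $M$. The strategy is to bound the volume contribution from simple guts pieces (2-STs, $T\times I$'s with isotopic sutures) using the Agol/Yoshida pared-guts estimates, and to argue that if $\Vol(M)<2V_8$, then the total contribution of simple pieces is not enough, so at some vertex $z_0$ of the Thurston sphere $\Gamma(z_0)$ must contain a pair of guts pieces $G,G'$ of the type hypothesized in Theorem \ref{thm:2TIor4ST}, namely either two $T\times I$'s or two 4-STs whose sutures are all homologically nontrivial and pairwise non-isotopic.

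I expect the main obstacle to be the bookkeeping in the previous step: showing that under $\Vol(M)<2V_8$ one is genuinely forced into the two-$T\times I$/two-4-ST scenario, rather than some mixed arrangement of 2-STs and $T\times I$'s with coincident sutures that my volume bound fails to rule out. The key technical tool to push this through will be the exchange/intersection lemma (Lemma \ref{lem:intersects}) together with Lemma \ref{lem:gutsoflibrary}, which together control how the pared guts of the library bundle grow when adjacent layers have non-isotopic sutures. The careful case analysis of how sutures on the three boundary tori of $M$ can be distributed among the libroid guts components, and the resulting constraints from the doubled-guts invariance, is where the argument will be most delicate. Once the correct vertex is produced, Theorem \ref{thm:2TIor4ST} finishes the proof.
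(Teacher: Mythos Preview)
Your overall strategy---assume $\Vol(M)<2V_8$, locate a vertex $z$ of the Thurston sphere satisfying the hypothesis of Theorem~\ref{thm:2TIor4ST}, and conclude---matches the paper's. The gap is in how you propose to locate that vertex: your plan of ``bounding volume contributions from simple pieces'' and doing a case analysis at a single vertex approached from two adjacent cones is not what makes the argument work, and as stated it is not clear it would succeed.

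The paper's mechanism is more specific. For each boundary torus $\partial_i M$ there is a class $y_i\in H_2(M,\partial M)$ vanishing on $\partial_i M$, so $\Gamma(y_i)$ contains a $T\times I$ attached to $\partial_i M$. Using Lemmas~\ref{lem:nonhomologous}, \ref{lem:atleast3}(3), and \ref{lem:sequence}, one obtains for each $i$ a class $z_i$ lying in an open edge or face whose guts contain a $T\times I$ or 4-ST with sutures isotopic to a fixed curve $C_i\subset\partial_i M$. The doubled-guts invariance (Theorem~\ref{thm:invariancedoubledguts}) is then used not merely on a single cone but to propagate this feature across successive edges of the Thurston sphere, producing for each $i$ an entire $2$-dimensional subspace $V_i\subset H_2(M,\partial M)$ on which every nonzero class has a guts piece with sutures isotopic to $C_i$. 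Since $\dim H_2(M,\partial M)=3$ (this uses Lemma~\ref{lem:nononseparating} together with half-lives-half-dies; you asserted the dimension without justification), the intersection $V_1\cap V_2$ is nontrivial. Any nonzero $z$ there has two guts components $G_1,G_2$ with sutures isotopic to $C_1,C_2$, which are non-isotopic because $M$ is acylindrical. Pushing $z$ to a vertex via Lemma~\ref{lem:sequence} then feeds Theorem~\ref{thm:2TIor4ST}.

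One further correction: Lemmas~\ref{lem:intersects} and~\ref{lem:gutsoflibrary} are the engine inside the proof of Theorem~\ref{thm:2TIor4ST}, not tools for the vertex-finding step. The tools you actually need there are Lemma~\ref{lem:sequence}, Lemmas~\ref{lem:atleast3} and~\ref{lem:nonhomologous}, and the doubled-guts invariance.
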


Note that the volume of the 3-chain link $C_3$ complement $= 5.33\ldots <5.49\ldots = 1.5 V_8$. %\cite[Example 6.8.1]{thurston1979geometry}. 
So if an orientable hyperbolic 3-manifold $M$ with 3 cusps has volume smaller than $5.33\ldots$, there is an element $z$ in $H_2(M,\partial M)$ such that $\Gamma(z)$ contains an acylindrical taut sutured manifold.

If we cut the chain link complement along a 3-punctured sphere, we will get the acylindrical sutured manifold $(N_3,\gamma)$ in figure \ref{fig:acylindrical}. We can think of it as a pared manifold with the pared locus as the sutures.

\begin{figure}[hbt]
\begin{center}
	  \includegraphics[width=2 in]{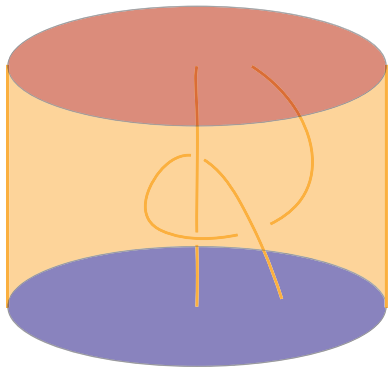}
\end{center}
\caption{Acylindrical sutured manifold $(N_3,\gamma)$.}\label{fig:acylindrical}
\end{figure}

This acylindrical sutured manifold has volume $5.33\ldots$ (see the figure in \cite[Example 6.8.1]{thurston1979geometry}). We state a conjecture for it.

\begin{conjecture}\label{con:acylindricalsutured}
	$(N_3,\gamma)$ is a minimal volume acylindrical taut sutured manifold up to orientation.
\end{conjecture}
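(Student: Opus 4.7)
The plan is to hyperbolize the acylindrical sutured manifold, apply Miyamoto's volume inequality to reduce to small Euler characteristic, and then enumerate the remaining topological types, ruling out each that is not $(N_3,\gamma)$.

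By Thurston's hyperbolization for acylindrical pared manifolds, any acylindrical taut sutured manifold $(M,\gamma)$ admits a unique hyperbolic metric in which $R(\gamma)$ is totally geodesic and the annular sutures correspond to rank-$2$ cusps; its volume $\Vol(M,\gamma)$ is then a topological invariant by Mostow rigidity. Miyamoto's inequality then gives
\[
\Vol(M,\gamma) \ge \tfrac{V_8}{2}\bigl(-\chi(M)\bigr),
\]
with equality iff $(M,\gamma)$ decomposes symmetrically into regular ideal octahedra. Since $2\chi(M)=\chi(R_+)+\chi(R_-)$ and $\Vol(N_3,\gamma)=5.33\ldots$, any counterexample would satisfy $-\chi(M)\le 2$.

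For $-\chi(M)=1$, tautness and acylindricity eliminate spherical, discal, annular, and toral components of $R_\pm$, forcing each of $R_+$ and $R_-$ to be a single thrice-punctured sphere. A combinatorial case analysis of the three-annulus gluings, together with acylindricity, should identify $(N_3,\gamma)$ as the unique such manifold up to orientation-reversal. For $-\chi(M)=2$, one enumerates the possible topologies of $(R_+,R_-)$ (for example, a four-punctured sphere paired with a thrice-punctured sphere plus an extra annular component, a twice-punctured torus paired with a thrice-punctured sphere, or two thrice-punctured spheres on each side), and shows that each is either ruled out by acylindricity or has volume strictly greater than $5.33\ldots$.

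The main obstacle is the case $-\chi(M)=2$: Miyamoto only yields $V_8\approx 3.66\ldots$, well below $5.33$, so the argument cannot be concluded from the a priori inequality alone. A refinement in the spirit of the regular-ideal-octahedron packing arguments of Agol \cite{agol2010minimal} and Yoshida \cite{yoshida2013minimal}, adapted to the sutured setting, will be needed: only combinatorially very constrained gluings of $\gamma$ admit a symmetric octahedral decomposition, and ruling out the others would require either a direct computation of volume using the ideal polyhedral decomposition induced by the canonical Epstein--Penner cell structure, or a strict improvement to Miyamoto using the geometry of the $2$-dimensional orbifold at the cusps.
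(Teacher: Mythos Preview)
The statement you are addressing is labelled \emph{Conjecture} in the paper and is not proved there; the paper uses it only conditionally (Theorem~\ref{thm:AST} together with Theorem~\ref{thm:volumeofbook} reduces the 3-cusp minimal volume problem to this conjecture). So there is no ``paper's own proof'' to compare against: you are attempting to settle an open problem.

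On the content of your sketch, two concrete issues. First, your application of Miyamoto is off by a factor of two. The totally geodesic boundary of the hyperbolized pared manifold is $R_+\cup R_-$, and since $2\chi(M)=\chi(R_+)+\chi(R_-)$ one gets
\[
\Vol(M,\gamma)\;\ge\;\tfrac{V_8}{2}\,\bigl|\chi(R_+\cup R_-)\bigr|\;=\;V_8\,(-\chi(M)),
\]
not $\tfrac{V_8}{2}(-\chi(M))$. With the correct bound, any competitor to $(N_3,\gamma)$ already satisfies $-\chi(M)=1$, so your case $-\chi(M)=2$ is moot. (Incidentally, annular sutures give rank-$1$ annular cusps, not rank-$2$ cusps.)

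Second, and more seriously, the heart of your plan --- ``a combinatorial case analysis of the three-annulus gluings'' for $-\chi(M)=1$ --- is not a finite problem. Fixing $R_+$ and $R_-$ to be thrice-punctured spheres and the sutures to be three annuli constrains only $\partial M$; the interior $3$-manifold is completely free, and there are infinitely many pairwise non-homeomorphic hyperbolic $3$-manifolds with totally geodesic boundary of Euler characteristic $-2$ (already classical from Kojima--Miyamoto type constructions). So there is no enumeration to perform, and Miyamoto alone gives only $\Vol\ge V_8\approx 3.66$, far below $5.33$. Any genuine proof would need a new lower bound for this specific class of pared manifolds (geodesic boundary of Euler characteristic $-2$, three annular cusps) that improves Miyamoto by roughly $1.67$, and no such bound is presently known. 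Your proposal identifies the right reduction but does not supply the missing ingredient; the conjecture remains open.
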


By Theorem \ref{thm:AST}, we actually show that 
\begin{theorem}
Suppose Conjecture \ref{con:acylindricalsutured} is true. Then the smallest volume of 3-cusped orientable hyperbolic manifolds is the volume of the 3-chain link. 
\end{theorem}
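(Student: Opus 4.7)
The plan is to split into two cases based on whether every class in $H_2(M,\partial M)$ is libroid, and in each case extract a lower bound on $\Vol(M)$ that beats or matches the volume $V(C_3) = 5.33\ldots$ of the $3$-chain link complement.

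First I would dispatch the easy direction: the $3$-chain link complement $S^3 \setminus C_3$ is itself a $3$-cusped orientable hyperbolic manifold of volume $V(C_3)$, so the minimal volume (if the lower bound holds) is at most $V(C_3)$. The work is in the matching lower bound.

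For the lower bound, let $M$ be any finite-volume orientable hyperbolic $3$-manifold with exactly $3$ cusps. Suppose first that every $z \in H_2(M,\partial M)$ is libroid. Then Theorem~\ref{thm:volumeofbook} applies and gives $\Vol(M) \ge 1.5\,V_8 = 5.49\ldots > V(C_3)$, so we are done in this case. Suppose instead that some class $z \in H_2(M,\partial M)$ is not libroid. By Proposition~\ref{prop:gutcomponentinhyperbolic}, the sutured guts $\Gamma(z)$ is a disjoint union of $T\times I$'s, solid tori with longitudinal/non-longitudinal sutures, and acylindrical taut sutured manifolds; by Definition~\ref{def:libroid}, failure of the libroid condition forces at least one acylindrical taut sutured component $G \subset \Gamma(z)$. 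Invoking Theorem~\ref{thm:AST} (the Agol--Storm--Thurston style guts volume inequality alluded to in the paper), we conclude
\[
\Vol(M) \;\ge\; \Vol(G),
\]
where $\Vol(G)$ denotes the hyperbolic volume of the pared manifold underlying $G$. Assuming Conjecture~\ref{con:acylindricalsutured}, the acylindrical taut sutured manifold $(N_3,\gamma)$ minimizes volume among all such objects, so $\Vol(G) \ge \Vol(N_3,\gamma) = 5.33\ldots = V(C_3)$. Combining, $\Vol(M) \ge V(C_3)$.

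The step I expect to be least routine is the clean invocation of Theorem~\ref{thm:AST}: one must confirm that the guts component $G$ extracted from a non-libroid class is exactly the sort of object to which the volume inequality applies, i.e.\ that its pared hyperbolic structure is indeed a lower bound for $\Vol(M)$ (not merely for some subquotient), and that Conjecture~\ref{con:acylindricalsutured} is stated in a form that covers $G$ (rather than only closed acylindrical objects). Once this matching is in place, the two-case structure is straightforward: the libroid case is handled by Theorem~\ref{thm:volumeofbook}, which already gives a strictly larger bound than $V(C_3)$, while the non-libroid case extracts an acylindrical guts component whose volume is bounded below by $\Vol(N_3,\gamma) = V(C_3)$ under the conjecture. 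Since the $3$-chain link complement realizes $V(C_3)$, equality is attained, completing the proof.
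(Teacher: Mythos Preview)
Your proposal is correct and follows the same approach as the paper. The paper's proof is essentially the paragraph preceding the theorem together with the phrase ``By Theorem~\ref{thm:AST}'': if $\Vol(M) < V(C_3) < 1.5\,V_8$, then Theorem~\ref{thm:volumeofbook} forces some $z$ to be non-libroid, so $\Gamma(z)$ contains an acylindrical taut sutured component $G$; then Theorem~\ref{thm:AST} gives $\Vol(M) \ge \Vol(G)$ and Conjecture~\ref{con:acylindricalsutured} gives $\Vol(G) \ge \Vol(N_3,\gamma) = V(C_3)$, a contradiction. Your two-case split just makes this explicit, and the concern you flag about invoking Theorem~\ref{thm:AST} is legitimate but easily resolved: an acylindrical component of $\Gamma(z)$ is precisely a type~(4) piece in the characteristic decomposition of $M\spl F(z)$ (this is how Proposition~\ref{prop:gutcomponentinhyperbolic} is proved), hence a component of the pared guts $Guts(M,F(z))$, so $\Vol(M) \ge \Vol(Guts(M,F(z))) \ge \Vol(G)$.
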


This paper is based on and expands upon Chapter 2 of the author's Ph.D. thesis \cite{zhang2020guts}.

{\bf Organization.}

Section \ref{sec:volume:pared} consists of a review of pared manifolds and pared guts.

In Section \ref{sec:volume:sutured}, we classify sutured guts in manifolds whose interior is a hyperbolic manifold of finite volume. Moreover, we show some properties of the sutured guts related to a vertex element of the Thurston norm.

In Section \ref{sec:volume:library}, we introduce the definition of sutured bundles and library bundles which is used to prove that we cannot exchange sutured guts components infinitely many times in a hyperbolic manifold (Lemma \ref{lem:intersects}). We also define pared sutured manifolds which have compatible sutured manifold structure and pared manifold structure which helps find the guts of library sutured manifolds.

In Section \ref{sec:volume:3-cusped}, we show that for a 3-cusped hyperbolic manifold of finite volume, there is a sequence of 2 sutured decomposition for the sutured guts related to different 2-homology classes. 

In Section \ref{sec:volume:volumeoflibrary}, we analyze different cases for a 3-cusped hyperbolic manifold under a non-isotopic condition and estimate the volume of it as in Theorem \ref{thm:2TIor4ST}.

In Section \ref{sec:volume:general}, we define doubled guts for 2-homology classes. In Theorem \ref{thm:invariancedoubledguts}, we show that the doubled guts remain invariant in any open Thurston cone $\Delta$ for a certain class of orientable hyperbolic 3-manifolds. By using doubled guts, we deduce Theorem \ref{thm:volumeofbook} to the case in Section \ref{sec:volume:volumeoflibrary}.

{\bf Acknowledgements.}
	
I am grateful to my Ph.D. advisor, Ian Agol, for his guidance, support, and mentorship throughout the development of this paper and my thesis. He introduced me to the problems of minimal volume hyperbolic manifolds and suggested the approach of using guts in 3-dimensional manifolds. Furthermore, he provided valuable suggestions and identified issues in previous drafts, which were crucial to the completion of this work. I would also like to thank Ken’ichi Yoshida, Jason DeBlois, Sadayoshi Kojima and David Gabai for helpful comments and conversations. 

\section{Pared Manifolds, Pared Guts and Volume Estimation} \label{sec:volume:pared}
We call the guts in \cite{agol2010minimal} \emph{pared guts} since the notion comes from the characteristic manifold of a pared manifold.

\begin{definition}
A {\it pared manifold} is a  pair $(M,P)$ where 
\begin{itemize}
\item
$M$ is a compact, orientable irreducible 3-manifold and
\item
$P\subset \partial M$ is a union of essential annuli
and tori in $M$,
\end{itemize}
such that
\begin{itemize}
\item
every abelian, noncyclic subgroup of $\pi_1(M)$ is peripheral
with respect to $P$ (i.e., conjugate to a subgroup of the fundamental
group of a component of $P$) and
\item 
every map $ \varphi:(S^1\times I, S^1\times \partial I) \to (M,P)$ that
is injective on the fundamental groups deforms, as maps of pairs, into $P$. 
\end{itemize}
$P$ is called the {\it parabolic locus} of the {pared manifold} $(M,P)$.
We denote by $\partial_0 M$ the surface $\partial M - int(P)$. 
\end{definition}

\begin{lemma}[{\cite[Lemma 3.2]{agol2010minimal}}]\label{lem:pared}
	Let $M$ be an orientable compact manifold such that $int(M)$ is hyperbolic of finite volume and so that $P = \partial M$ is the pared locus of $M$. Let $X \subset M$ be an essential surface. Then $(M\spl X,P\spl \partial X)$ is a pared manifold.
\end{lemma}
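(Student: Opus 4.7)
The plan is to verify the four defining conditions of a pared manifold for $(N, Q) := (M \spl X, P \spl \partial X)$: (i) $N$ is compact, orientable, irreducible; (ii) $Q$ is a union of essential annuli and tori; (iii) every abelian noncyclic subgroup of $\pi_1(N)$ is peripheral with respect to $Q$; (iv) every $\pi_1$-injective annulus pair into $(N, Q)$ deforms into $Q$.

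First, for (i) and (ii): since $\inter(M)$ is hyperbolic of finite volume, $M$ is irreducible and $P = \partial M$ consists of tori. The essential surface $X$ is incompressible and $\partial$-incompressible; cutting along $X$ preserves irreducibility by standard 3-manifold arguments, and each component of $N$ is $\pi_1$-injective in $M$. The pared locus $Q$ is the disjoint union of uncut tori of $P$ together with annular pieces obtained from cutting tori of $P$ along parallel essential simple closed curves of $\partial X$; each piece is $\pi_1$-injective in $M$ as a subgroup of a parabolic, hence $\pi_1$-injective in $N$.

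Next, for (iii) and (iv): I would isotope $X$ so that $X \cap (T \times [0, \infty)) = (\partial X \cap T) \times [0, \infty)$ in each cusp neighborhood of $M$, so that cusp pieces of $N$ take the form $A_j \times [0, \infty)$ for annular pieces $A_j$ of a cut $T$, or full torus products for uncut $T$. For (iii): an abelian noncyclic $A \le \pi_1(N)$ is conjugate into $\pi_1(T)$ for some cusp torus $T$ via $\pi_1(N) \hookrightarrow \pi_1(M)$, since $\pi_1(M)$ is a geometrically finite Kleinian group in which every rank-$2$ abelian subgroup is peripheral; the $\pi_1$ of any cut cusp piece is cyclic, so $T$ must be uncut, making $A$ peripheral to $T \subset Q$. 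For (iv): given $\pi_1$-injective $\varphi: (S^1 \times I, S^1 \times \partial I) \to (N, Q)$, compose with $N \hookrightarrow M$ and use pared-ness of $(M, P)$ to deform the annulus into a single cusp torus $T$; if $T$ is uncut the deformation occurs within $T \subset Q$, while if $T$ is cut, I would argue that the two boundary circles of $\varphi$ lie on the same annular piece $A_j$, so the deformation happens inside $A_j \times [0, \infty) \subset N$.

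The main obstacle is ruling out, in (iv) with $T$ cut, an essential annulus whose boundary circles lie on distinct annular pieces $A_j, A_k$ of cut $T$. Suppose such an annulus exists; it yields a conjugating path $\gamma$ in $N$ from $A_j$ to $A_k$, and pairing $\gamma$ with a path $\gamma_T$ in $T$ representing the same element of $\pi_1(M)$ produces a null-homotopic loop $\gamma \cdot \gamma_T^{-1}$ bounding a disk $D$ in $M$. Putting $D$ in general position with $X$ and performing innermost-circle and outermost-arc surgery on $D \cap X$ (circles eliminated by incompressibility of $X$) yields a $\partial$-compressing disk for $X$, contradicting $\partial$-incompressibility.
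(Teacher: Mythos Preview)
The paper does not prove this lemma---it is simply cited from Agol---so there is no in-paper argument to compare against. Your outline is the standard one and is correct in spirit, but two steps have genuine gaps.

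For (iii), you argue that since cut cusp pieces of $N$ have cyclic fundamental group, the torus $T$ carrying $A$ must be uncut, ``making $A$ peripheral to $T\subset Q$.'' But \emph{peripheral} means conjugate in $\pi_1(N)$, and all you have established is conjugacy in $\pi_1(M)$; the fact that cut cusp pieces are cyclic does not by itself constrain where an abstract $\mathbb{Z}^2\le\pi_1(N)$ sits. One clean repair is to first prove $N$ is atoroidal: an incompressible torus $T'\subset N$ is incompressible in $M$, hence $\partial$-parallel there, and the parallelism region $T\times I$ cannot meet $X$ (any component of $X\cap(T\times I)$ would be a torus or $\partial$-parallel annulus component of $X$, contradicting essentiality), so $T'$ is $\partial$-parallel in $N$ too. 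Atoroidality then forces every $\mathbb{Z}^2\le\pi_1(N)$ to be peripheral in $N$, and since the $X$-copies in $\partial N$ have negative Euler characteristic, $A$ must land in an uncut torus of $Q$.

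For (iv), the outermost-arc step does not directly yield a $\partial$-compressing disk: nothing prevents the arc $f(a)$ from being $\partial$-parallel in $X$, in which case $D''$ is not a compression. What $\partial$-incompressibility actually gives you is precisely that $f(a)$ \emph{is} $\partial$-parallel; use the resulting bigon in $X$ to homotope $D$ rel $\gamma$, replacing $\gamma_T$ by a homotopic path in $T$ with strictly fewer crossings of $\partial X$, and iterate. Since the endpoints of $\gamma_T$ lie in distinct pieces $A_j$, $A_k$, every such path must cross $\partial X$, so the process cannot terminate---that is the contradiction. The ``easy'' subcases ($T$ uncut, or $j=k$) likewise need a line: the homotopy of $\varphi$ into $P$ is a priori through $M$ and must be surgered off $X$ (using incompressibility of $X$) to produce a homotopy in $N$ with boundary in $Q$.
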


Let $(M,P)$ be a {pared manifold} such that $\partial_0 M$ is incompressible.
There is a canonical set of essential annuli $(A,\partial A)\subset (M,\partial_0 M)$, called the {\it characteristic annuli}, 
such that $(P,\partial P)\subset (A,\partial A)$, and
characterized (up to isotopy) by the property that they are the maximal collection of non-parallel essential
annuli such that every other essential annulus $(B,\partial B)\subset (M,\partial_0 M)$
may be relatively isotoped to an annulus $(B',\partial B')\subset (M,\partial_0 M)$
so that $B'\cap A=\emptyset$.
Each complementary component  $L\subset M\spl A$  is 
one of the
following types:

\begin{enumerate}
\item
$T^2\times I$, a neighborhood of a torus component of $P$,
\item 
$(S^1\times D^2, S^1\times D^2 \cap \partial_0 M)$, a solid torus with annuli in the boundary,
\item
$(I-bundles, \partial I -subbundles)$, where the $I$-bundles over
the boundary are subsets of $A$, or
\item
all essential annuli in $(L,\partial_0 M\cap L)$ are parallel in $L$ into 
$(L\cap A, \partial(L\cap A) ).$
\end{enumerate}

The \hypertarget{window}{union} of components
of type $(3)$, denoted $(W,\partial_0 W)\subset (M, \partial_0 M)$, is called the {\it window} of $(M,\partial_0 M)$. 
It is unique up to isotopy of pairs. 

The union of the components of type 4 is called the \emph{pared guts} of $M$ and denoted by $Guts(M,P)$ (a bit different from \cite{agol2010minimal}). Note that the parabolic locus of $Guts(M,P)$ will consist of characteristic annuli. If $M$ is compact {orientable} and $int(M)$ admits a metric of finite volume, and $(X,\partial X)\subset (M,\partial M)$ is an essential surface,
then define $Guts(M,X)=Guts(M\spl X,\partial M\spl \partial X)$. 
The components of type (4) are acylindrical {pared manifolds}, which have a complete hyperbolic
structure of finite volume with geodesic boundary \cite{morgan1984chapter}. 
We let $\Vol(Guts(M,P))$ denote the volume of this hyperbolic metric. If $D(M,P)$ is
obtained by taking two copies of $M$ and gluing them along the corresponding surfaces
$\partial_0 M$, then $\Vol(Guts(M,P))=\frac{1}{2} \Vol( D(M,P) )$, where $\Vol(D(M,P))$ is
the simplicial volume of $D(M,P)$, i.e., the sum of the volumes of the hyperbolic
pieces of the geometric decomposition.

In this paper, we heavily use the following theorem to estimate the volume of a hyperbolic manifold of finite volume.

\begin{theorem}[{\cite[Theorem 5.5]{calegari2010positivity}, \cite[Theorem 9.1]{agol2005lower}}]
\label{thm:AST}
 
Let $M$ be a finite volume orientable hyperbolic manifold, and $S \subset M$ be 
an essential surface. Then 
\[
\mathrm{Vol}(M) \geq \mathrm{Vol}(Guts(M,S)) \geq \frac{V_8}{2} 
|\chi (\partial Guts(M,S))|. 
\]

\end{theorem}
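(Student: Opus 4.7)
The plan is to show, under Conjecture \ref{con:acylindricalsutured}, that every 3-cusped orientable hyperbolic 3-manifold $M$ of finite volume satisfies $\Vol(M) \ge 5.33\ldots = \Vol(C_3)$. Since $C_3$ itself is 3-cusped orientable hyperbolic and realizes this value, the minimum is then attained at $C_3$. I will argue by contradiction: assume $\Vol(M) < 5.33\ldots$ and produce an acylindrical sutured component whose volume alone exceeds $5.33\ldots$, contradicting the assumption.

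First, because $1.5 V_8 = 5.49\ldots > 5.33\ldots$, the contrapositive of Theorem \ref{thm:volumeofbook} forces the existence of an element $z \in H_2(M,\partial M)$ that is not libroid. By the classification of sutured guts components in Proposition \ref{prop:gutcomponentinhyperbolic}, a non-libroid class is one whose $\Gamma(z)$ is not built solely from $T\times I$'s and solid tori; thus $\Gamma(z)$ must contain at least one component $G$ which is an acylindrical taut sutured manifold. This is the precise content of the observation highlighted in the introduction that ``if $\Vol(M) < 5.33\ldots$, there is $z$ with $\Gamma(z)$ containing an acylindrical taut sutured manifold''.

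Next, apply the conjecture directly to $G$: since $G$ is an acylindrical taut sutured manifold, Conjecture \ref{con:acylindricalsutured} yields $\Vol(G) \ge \Vol(N_3,\gamma) = 5.33\ldots$. Transfer this to $\Vol(M)$ as follows. Choose a norm-minimizing essential surface $S \subset M$ representing $z$; by Lemma \ref{lem:pared}, $(M\spl S,\partial M\spl \partial S)$ is a pared manifold, and $G$ appears as an acylindrical pared component of $Guts(M,S)$ once the sutures of $G$ are reinterpreted as characteristic annuli in the pared decomposition. Theorem \ref{thm:AST} then yields
$$\Vol(M) \ge \Vol(Guts(M,S)) \ge \Vol(G) \ge 5.33\ldots,$$
contradicting $\Vol(M) < 5.33\ldots$, and completing the proof.

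The principal obstacle is the compatibility in the last step: one must verify that an acylindrical component $G$ of the sutured guts $\Gamma(z)$, defined in \cite{AZ1}, sits as an acylindrical pared-manifold component inside the pared guts $Guts(M,S)$, with its intrinsic hyperbolic volume (in the sense used by Conjecture \ref{con:acylindricalsutured}) equal to its contribution to $\Vol(Guts(M,S))$. Concretely, the sutures of $G$ must coincide with the characteristic annuli separating $G$ from window and solid-torus pieces in the characteristic decomposition of $M\spl S$. Once this identification is in place, the chain of inequalities above is immediate and the theorem follows from the conjecture.
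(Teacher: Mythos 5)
You have proved the wrong statement. The statement you were given is Theorem~\ref{thm:AST}, the guts volume inequality
$\mathrm{Vol}(M) \geq \mathrm{Vol}(Guts(M,S)) \geq \frac{V_8}{2}|\chi(\partial Guts(M,S))|$,
which the paper does not prove but imports from the literature: the first inequality is the deep Agol--Storm--Thurston / Calegari--Gabai estimate (using Perelman's monotonicity of volume under Ricci flow with surgery, or the earlier smoothing argument), and the second inequality, as the paper notes immediately afterward, follows from Miyamoto's Theorem~\ref{thm:Miyamoto} on hyperbolic manifolds with totally geodesic boundary. Your proposal instead proves the unlabeled theorem just after Conjecture~\ref{con:acylindricalsutured}: that if the conjecture holds, then $C_3$ realizes the minimal $3$-cusped volume. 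These are unrelated statements.

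More tellingly, your argument explicitly invokes Theorem~\ref{thm:AST} in its final step (``Theorem \ref{thm:AST} then yields $\Vol(M)\ge\Vol(Guts(M,S))\ge\Vol(G)\ge 5.33\ldots$''), so it presupposes the very result you were asked to establish and is therefore circular if read as a proof of it. If you intended to address Theorem~\ref{thm:AST} directly, nothing of your argument applies: you would need to argue the volume comparison between $M$ and its doubled guts (or appeal to the Perelman/Ricci-flow machinery for the first inequality) and then apply Miyamoto's octahedron lower bound for hyperbolic manifolds with totally geodesic boundary for the second. Neither Conjecture~\ref{con:acylindricalsutured}, Theorem~\ref{thm:volumeofbook}, nor Proposition~\ref{prop:gutcomponentinhyperbolic} is relevant to that proof.
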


The estimation of $\mathrm{Vol}(Guts(M,S))$ in 
Theorem \ref{thm:AST} follows from the following theorem. 

\begin{theorem}[{\cite[Theorem 5.2]{miyamoto1994volumes}}]
\label{thm:Miyamoto}

Let $M$ be a hyperbolic manifold with totally geodesic boundary. Then 
$\mathrm{Vol}(M) \geq \frac{V_8}{2} |\chi (\partial M)|$. Moreover, $M$ is obtained  
from ideal regular octahedra by gluing along their faces when the equality holds. 
\end{theorem}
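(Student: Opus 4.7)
The plan is to follow Miyamoto's decomposition strategy: canonically partition $M$ into cells perpendicular to $\partial M$, establish a sharp local volume bound on each cell by comparison with the half regular ideal octahedron, and then sum. First I would pass to the universal cover $\widetilde M \subset \HH^3$, which is the intersection of open half-spaces bounded by the totally geodesic planes covering $\partial M$. For each such plane $\Pi$, the Voronoi cell $V_\Pi \subset \widetilde M$ is the set of points closer to $\Pi$ than to any other boundary lift, and collectively these cells tile $\widetilde M$. After a Ford--Voronoi truncation at the cusps and subdivision by the equidistant loci between short orthogeodesics, this descends to a decomposition of $M$ into perpendicular cells $C_F$ sitting over a polygonal decomposition $\{F\}$ of $\partial M$, closely related to Kojima's canonical polyhedral decomposition.

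Next I would establish the sharp local volume inequality. On a perpendicular cell $C_F$ over a base polygon $F \subset \partial M$ with depth function $h$, the hyperbolic volume admits the Fermi-coordinate expression
\[
\Vol(C_F) = \int_F \int_0^{h(x)} \cosh^2(t)\, dt\, dA(x).
\]
The technical heart is to show that this integral is at least $\tfrac{V_8}{2}\lvert\chi(F)\rvert$, where the local Euler contribution accounts for both the hyperbolic interior of $F$ and its ideal vertices. Combined with additivity over the decomposition and the Gauss--Bonnet identity $\Area(\partial M) = -2\pi\chi(\partial M)$, summing over cells then yields $\Vol(M) \ge \tfrac{V_8}{2}\lvert\chi(\partial M)\rvert$.

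Finally, I would analyze equality: the local inequality is tight precisely when $C_F$ is isometric to a half regular ideal octahedron, so when the global inequality is an equality, every cell is such a half-octahedron. These cells match across the $\partial M$ walls and across the internal cell walls of the Voronoi decomposition to produce a decomposition of $M$ into regular ideal octahedra identified along their faces. The main obstacle is the sharp local volume estimate itself; it is in essence a B\"or\"oczky-style density bound for hyperbolic packings by totally geodesic half-spaces and horoballs, and verifying that the half regular ideal octahedron is the unique extremal configuration requires the delicate case-by-case hyperbolic trigonometric analysis that constitutes the technical core of Miyamoto's argument.
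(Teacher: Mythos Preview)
The paper does not prove this theorem at all; it is stated purely as a citation of Miyamoto's result and used as a black box to justify the Euler-characteristic term in Theorem~\ref{thm:AST}. There is therefore no ``paper's own proof'' to compare your proposal against.

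For what it is worth, your sketch is in the right spirit of Miyamoto's actual argument (a decomposition of $M$ relative to the lifts of $\partial M$, a sharp local volume-versus-boundary-area density estimate, and an analysis of the equality configuration). One point to be careful about: the local inequality you write, $\Vol(C_F)\ge \tfrac{V_8}{2}\lvert\chi(F)\rvert$, is not quite how Miyamoto packages things. He works with truncated simplices and a monotone density function comparing volume to boundary area as the truncation parameter varies; the constant $V_8/2$ emerges from the limiting (ideal) configuration rather than from a per-cell Euler-characteristic count. Your formulation would need a precise convention for distributing angle defects and ideal vertices among the polygons $F$ before it becomes a well-posed local statement. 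But since the present paper simply quotes the result, none of this affects anything here.
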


The following two results come from \cite{yoshida2013minimal} which is about the minimal volume orientable hyperbolic 3-manifold with 4 cusps. We will also use them to estimate the volume of some pared guts.
\begin{lemma}[{\cite[Lemma 3.2]{yoshida2013minimal}}]
\label{lem:nonsepsurface}
Let $L$ be an orientable hyperbolic 3-manifold with geodesic boundary $S$, with $k$ 
annular cusps $A_1 , \dots , A_k$ and with $n-k$ torus cusps $T_{k+1} , \dots , T_n$, 
where $1 \leq k \leq 3$ and $n \geq 4$. Assume that $\chi (S) = -2$. 
Then there is an essential surface $Y \subset L$ such that 
$Y \cap S = \emptyset$ and $[\partial Y] \neq 0 \in H_1 (\partial L; \mathbb{Z})$. 
\end{lemma}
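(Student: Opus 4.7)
The plan is to produce $Y$ via a homological dimension count combined with Thurston norm minimization, using one key observation: if $Y \subset L$ is any properly embedded surface with $\partial Y \subset \mathrm{int}(P)$, where $P = \overline{\partial L \setminus S}$ denotes the pared locus (the union of the $k$ annular cusps $A_i$ and the $n-k$ torus cusps $T_j$), then automatically $Y \cap S = \emptyset$. Indeed, $\mathrm{int}(Y) \subset \mathrm{int}(L)$ is disjoint from $S \subset \partial L$, while $\partial Y$ avoids $S$ by construction. So it suffices to exhibit a class in $H_2(L, P; \mathbb{Z})$ whose image in $H_1(\partial L; \mathbb{Z})$ is nontrivial, and to realize it by an essential surface via Thurston norm minimization.

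First I would analyze the topology of $\partial L$. Setting $\Sigma = \overline{\partial L \setminus (T_{k+1} \cup \cdots \cup T_n)} = S \cup A_1 \cup \cdots \cup A_k$, one has $\chi(\Sigma) = \chi(S) = -2$. Since $L$ is hyperbolic with totally geodesic boundary, each component of $\Sigma$ is a closed orientable surface of negative Euler characteristic (no component can be a torus, else it would be a torus cusp). Hence every component has $\chi \leq -2$, and $\Sigma$ must be a single genus-$2$ surface. Consequently $H_1(\partial L; \mathbb{Q}) \cong \mathbb{Q}^{4 + 2(n-k)}$. By half-lives-half-dies, the subspace $B = \ker(H_1(\partial L; \mathbb{Q}) \to H_1(L; \mathbb{Q}))$ has dimension $2 + (n-k)$. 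The image $I$ of $H_1(P; \mathbb{Q}) \to H_1(\partial L; \mathbb{Q})$ has dimension $2(n-k) + r$, where $r$ is the rank of the map sending each annular cusp core to its class in $H_1(\Sigma; \mathbb{Q})$. The standard rank inequality yields
\[
\dim(I \cap B) \geq (n - k) + r - 2.
\]

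For $k = 1$ (so $n - k \geq 3$), this is already positive regardless of $r$. For $k \in \{2, 3\}$, I would invoke acylindricity of $L$ to bound $r$ from below: any two parallel annular cusp cores in $\Sigma$ would cobound an annulus which, pushed slightly into $\mathrm{int}(L)$, becomes an essential annulus with boundary on $P$, contradicting acylindricity. Since two disjoint essential separating curves on $\Sigma_2$ are parallel, at most one core can be separating; and by an Euler characteristic computation, two disjoint non-parallel non-separating curves on $\Sigma_2$ are linearly independent in $H_1(\Sigma_2)$ (a homology between them would force them to cobound an annulus). Together these observations force $r \geq k - 1$, making $(n-k) + r - 2 \geq 1$ in the tight cases $k = 2, n = 4$ and $k = 3, n = 4$. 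Given any resulting nonzero integral class $\gamma \in I \cap B$, the long exact sequence of the pair $(L, P)$ produces $\beta \in H_2(L, P; \mathbb{Z})$ with $\partial \beta = \gamma$, and a Thurston-norm-minimizing representative of $\beta$ is the desired essential $Y$ once $\partial Y$ is isotoped into $\mathrm{int}(P)$.

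The main obstacle will be the case analysis for $k \in \{2, 3\}$, where the naive rank inequality is too weak and one must carefully use acylindricity together with the constrained topology of $\Sigma_2$ to rule out degenerate configurations of the annular cusp cores. The sharpest subcase is $k = 3, n = 4$, which requires $r \geq 2$ and hence both of the preceding structural observations about curves on $\Sigma_2$.
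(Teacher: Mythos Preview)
The paper does not supply a proof of this lemma; it is quoted verbatim from Yoshida \cite{yoshida2013minimal} and used as a black box in the proof of Theorem~\ref{thm:4cusps}. So there is no in-paper argument to compare against, and I can only assess your proposal on its own merits.

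Your homological approach is sound and correct in substance. The identification of $\Sigma = S \cup A_1 \cup \cdots \cup A_k$ as a single closed genus-$2$ surface is right (each component of the geodesic boundary has $\chi \le -2$, forcing $\Sigma$ connected). The half-lives-half-dies count $\dim B = 2 + (n-k)$ and the inequality $\dim(I \cap B) \ge (n-k) + r - 2$ are both correct. The crux is the bound $r \ge k - 1$ for $k \in \{2,3\}$, and your two structural facts about disjoint simple closed curves on $\Sigma_2$ are valid: disjoint essential separating curves are parallel, and disjoint non-parallel non-separating curves are $\mathbb{Q}$-independent in $H_1(\Sigma_2)$ (a dependence between primitive classes forces $[c_1] = \pm[c_2]$, so $c_1 \cup c_2$ bounds, and on $\Sigma_2$ an Euler-characteristic count forces one complementary region to be an annulus). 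One phrasing to sharpen: you should invoke \emph{pared} acylindricity of $(L,P)$ rather than ``acylindricity of $L$''---this is exactly what the hyperbolic structure with totally geodesic boundary provides, and it is what rules out parallel cusp cores, since an annulus in $\Sigma$ between cores of distinct $A_i, A_j$ has its boundary components in different connected components of $P$ and hence cannot be deformed into $P$.

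The one step that deserves more care is extracting an \emph{essential} $Y$ from $\beta \in H_2(L,P;\mathbb{Z})$. A norm-minimizing representative with $\partial Y \subset P$ is incompressible, and $\partial$-parallel components can be discarded without affecting $[\partial Y]$ in $H_1(\partial L)$ (each contributes zero there). But $\partial$-incompressibility in $L$ is not automatic: a $\partial$-compressing disk whose boundary arc runs through $S$ is not excluded by norm-minimality relative to $P$, and performing such a compression would drag $\partial Y$ into $S$. One clean way to close this is to pass to the double $\hat L = D_S L$: the inclusion $H_1(P) \hookrightarrow H_1(\partial \hat L)$ is injective, so the image class still has nontrivial boundary, and a norm-minimizing representative in $\hat L$ is genuinely essential; making it disjoint from the separating totally geodesic $S$ (or taking a $\tau$-equivariant minimizer) then returns the desired $Y \subset L$. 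This is routine but should be said explicitly rather than absorbed into the phrase ``Thurston-norm-minimizing representative.''
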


\begin{theorem}[{\cite[Theorem 4.1]{yoshida2013minimal}}]
	\label{thm:estimategeodbd} 
Let $L$ be an orientable hyperbolic 3-manifold with geodesic boundary $S$. 
Suppose that there is an essential surface $Y \subset L$ such that 
$Y \cap S = \emptyset$ and $[\partial Y] \neq 0 \in H_1 (\partial L; \mathbb Z)$. 
Then there is an essential surface $Y^{\prime}$ such that 
$\chi (\partial \mathrm{Guts}(L \spl Y^{\prime})) \leq -4$ and 
$\mathrm{vol}(L) \geq 2V_8$. 
\end{theorem}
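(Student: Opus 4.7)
The plan is to produce an essential surface $Y'\subset L$ with
$$|\chi(\partial\mathrm{Guts}(L\spl Y'))|\geq 4,$$
and then invoke Theorem \ref{thm:AST} to conclude
$$\mathrm{vol}(L)\ \geq\ \tfrac{V_8}{2}\cdot 4\ =\ 2V_8.$$
Regard $(L,P)$ as a pared manifold whose parabolic locus $P$ is the union of all torus and annular cusps, so that the totally geodesic surface $S$ equals the nonparabolic boundary $\partial_0 L$. If $|\chi(S)|\geq 4$ already, then any essential $Y'$ (in particular $Y$ itself, provided $S$ is not absorbed into a window) gives $|\chi(\partial\mathrm{Guts})|\geq 4$ directly from $S$; so one may assume $|\chi(S)|\leq 3$.

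Starting from the hypothesized $Y$, I would replace it by an essential surface $Y'$ that is incompressible, boundary-incompressible, and of minimal $\chi_-$ subject to $Y'\cap S=\emptyset$ and $[\partial Y']\neq 0 \in H_1(\partial L;\mathbb Z)$. By Lemma \ref{lem:pared}, cutting along $Y'$ gives a pared manifold $(L\spl Y', P\spl \partial Y')$ whose nonparabolic boundary is $S$ together with two parallel copies $Y'_\pm$ of $Y'$, yielding
$$\chi(\partial_0(L\spl Y'))\ =\ \chi(S) + 2\chi(Y').$$

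The proof then splits on the characteristic submanifold of $(L\spl Y',P\spl \partial Y')$. In the generic case where no part of $S\cup Y'_\pm$ is absorbed into a window piece (torus neighborhood, solid torus with annular sutures, or $I$-bundle), one has $\partial\mathrm{Guts}(L\spl Y')=\partial_0(L\spl Y')$, so $\chi(S)+2\chi(Y')\leq -4$ suffices; this holds whenever $\chi(Y')\leq -1$ and $\chi(S)\leq -2$, or $\chi(Y')\leq -2$. When absorption does occur, the window piece involved produces either (a) a boundary-compressing arc for $Y'$ or a lower-complexity essential surface competing with $Y'$, contradicting minimality, or (b) an $I$-bundle gluing two copies of $Y'$, or gluing $Y'$ to $S$, via an essential annulus in $L$ whose existence is sharply constrained by hyperbolicity (no essential tori, and all essential annuli lie in the characteristic submanifold). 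In the low-complexity cases $\chi(Y')=0$ (essential annulus) or $\chi(S)=-1$, I would use the window $I$-bundle or solid torus produced by absorption to construct a boundary-tubed or Haken-summed replacement $Y''$ of strictly greater $|\chi|$, still essential and disjoint from $S$ with $[\partial Y'']\neq 0$, and reiterate the argument with $Y''$ in place of $Y'$.

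The main obstacle is controlling absorption into windows: every absorbed horizontal leaf reduces $|\chi(\partial\mathrm{Guts})|$ by the Euler characteristic of the window's horizontal boundary, so one must either rule out enough absorption via minimal-complexity and hyperbolicity arguments, or exploit the absorbed window to construct a better $Y''$. The subtlest situation is $\chi(Y')=0$ with $|\chi(S)|$ small, where essential annuli in $L$ and annular window pieces conspire to make boundary-tubing produce surfaces that either compress or coincide with already-rejected candidates; here one must carefully use the fact that $L$ is hyperbolic to exclude enough such candidates, and a complexity measure (Thurston norm of $[\partial Y']$ in $H_1(\partial L)$, or weight relative to a fixed triangulation) guarantees that the iteration terminates in finitely many steps, ending in the generic case with $|\chi(\partial\mathrm{Guts})|\geq 4$.
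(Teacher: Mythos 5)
This statement is not proved in the paper at hand: it is quoted verbatim as \cite[Theorem~4.1]{yoshida2013minimal}, so there is no ``paper's own proof'' to compare against; the relevant argument lives in Yoshida's paper (which in turn follows the template of Agol's Theorem~3.4 in \cite{agol2010minimal}, built around \emph{annular compressions}). Your sketch has the right overall shape --- produce an essential $Y'$ with $|\chi(\partial\mathrm{Guts}(L\spl Y'))|\ge 4$ and invoke Theorem~\ref{thm:AST} --- but as written it is a plan rather than a proof, and it diverges from the cited argument in a way that creates gaps.

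Concretely: you begin by choosing $Y'$ incompressible, $\partial$-incompressible, and of minimal $\chi_-$ among surfaces disjoint from $S$ with $[\partial Y']\neq 0$, and then in the low-complexity cases you propose to ``construct a boundary-tubed or Haken-summed replacement $Y''$ of strictly greater $|\chi|$ \ldots and reiterate with $Y''$ in place of $Y'$.'' This is internally inconsistent: having already fixed $Y'$ to be $\chi_-$-minimal, you cannot later pass to a surface of strictly larger $|\chi|$ and appeal to the same minimality. More importantly, the step where absorption into a window ``produces either a boundary-compressing arc for $Y'$ \ldots or an $I$-bundle gluing \ldots constrained by hyperbolicity'' is exactly where the real work is, and it is left unjustified. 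The actual route (as in Agol/Yoshida) is to perform repeated annular compressions on $Y$; these preserve essentiality and $\chi$, increase the number of boundary components, and so terminate, and at the terminal stage one analyzes the characteristic submanifold of the pared manifold $(L\spl Y', P\spl\partial Y')$ directly to rule out small guts by exhibiting essential annuli or tori that would contradict hyperbolicity of $L$ (and using that $[\partial Y']\neq 0$). Annular compression is not the same operation as boundary-tubing or Haken sum, and the termination argument based on them is cleaner than the ``complexity measure (Thurston norm of $[\partial Y']$ \ldots or weight relative to a triangulation)'' you invoke, which is not developed. Also note one minor slip: when $\chi(S)\le -4$ one does not need $Y'$ or the guts at all --- Miyamoto's Theorem~\ref{thm:Miyamoto} applied to $L$ itself already gives $\mathrm{vol}(L)\ge 2V_8$ --- so the case split should be phrased that way rather than via ``any essential $Y'$.''

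In short: your strategy points in the right direction, but to turn it into a proof you should replace the boundary-tube/Haken-sum iteration with the annular-compression iteration of \cite{agol2010minimal,yoshida2013minimal}, fix the minimality inconsistency, and carry out the case analysis of which components of $\partial_0(L\spl Y')$ can be absorbed into windows, using hyperbolicity of $L$ and $[\partial Y']\neq 0$ to exclude the bad configurations.
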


\begin{theorem} \label{thm:4cusps}
	Let $L$ be an orientable hyperbolic 3-manifold with geodesic boundary $S$, with $k$ 
annular cusps $A_1 , \dots , A_k$ and with $n-k$ torus cusps $T_{k+1} , \dots , T_n$, where $1 \leq k \leq 3$ and $n \geq 4$. Then $\mathrm{vol}(L) \geq 2V_8$. 
\end{theorem}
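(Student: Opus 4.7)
The plan is to prove Theorem \ref{thm:4cusps} by a dichotomy on $\chi(S)$, applying Theorem \ref{thm:Miyamoto} when $|\chi(S)|$ is large and combining Lemma \ref{lem:nonsepsurface} with Theorem \ref{thm:estimategeodbd} when $|\chi(S)|$ is small. The key preliminary step is an observation that, under these hypotheses, $\chi(S)$ is automatically even, so the low-complexity case reduces exactly to $\chi(S)=-2$, which matches Lemma \ref{lem:nonsepsurface} verbatim.

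To establish the parity, I would work with a compact core $L'$ of $L$ obtained by truncating each cusp: torus cusps along horospherical tori $T$, and annular cusps along annular cross-sections $C$ whose two boundary circles lie on the truncated geodesic boundary $S^*$. This gives
\[
\partial L' \;=\; T \;\sqcup\; \bigl(S^* \cup_\partial C\bigr).
\]
Since $\chi(T)=\chi(C)=0$ and all the gluings occur along circles, a short inclusion-exclusion yields $\chi(\partial L')=\chi(S)$. But $L'$ is a compact orientable $3$-manifold, so $\chi(\partial L')=2\chi(L')$ is even, forcing $\chi(S)$ to be even. Combined with $\chi(S)<0$ (since $S$ supports a finite-area hyperbolic structure), this gives $\chi(S)\in\{-2,-4,-6,\ldots\}$.

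With the parity in hand, the first case is $\chi(S)\leq -4$, where Theorem \ref{thm:Miyamoto} immediately produces
\[
\mathrm{vol}(L) \;\geq\; \tfrac{V_8}{2}\,|\chi(S)| \;\geq\; 2V_8.
\]
In the remaining case $\chi(S)=-2$, the hypotheses of Lemma \ref{lem:nonsepsurface} are satisfied word for word, producing an essential surface $Y\subset L$ with $Y\cap S=\emptyset$ and $[\partial Y]\neq 0$ in $H_1(\partial L;\mathbb{Z})$. Plugging $Y$ into Theorem \ref{thm:estimategeodbd} then gives $\mathrm{vol}(L)\geq 2V_8$, completing the argument.

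All substantial volume estimates are already encapsulated in the cited results, so the only truly new ingredient is the parity statement. I expect the main place where care is required to be the bookkeeping for how the annular cross-sections attach to the truncated geodesic boundary and the verification that $S^* \cup_\partial C$ really is a closed surface; beyond that, I do not anticipate a genuine obstacle.
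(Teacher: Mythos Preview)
Your proposal is correct and follows essentially the same approach as the paper: establish that $\chi(S)$ is even (the paper phrases this as $\chi(S)=2\chi(L)$ via doubling along $S$, while you truncate to a compact core, but these are equivalent), then split into $\chi(S)\le -4$ (Miyamoto) and $\chi(S)=-2$ (Lemma~\ref{lem:nonsepsurface} plus Theorem~\ref{thm:estimategeodbd}). The only cosmetic difference is that the paper cites Theorem~\ref{thm:AST} in the first case whereas you cite Theorem~\ref{thm:Miyamoto} directly; your citation is arguably the more precise one since $L$ already has totally geodesic boundary.
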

\begin{proof}
Since the double of $L$ along $S$ has Euler characteristic 0, $\chi(S) = 2 \chi(L)$. 
If $\chi(S) \le -4$, by Theorem \ref{thm:AST}, $\Vol(L) \ge 2V_8$. If $\chi(S) = -2$, by Lemma \ref{lem:nonsepsurface}, there is an essential surface $Y \subset L$ such that 
$Y \cap S = \emptyset$ and $[\partial Y] \neq 0 \in H_1 (\partial L; \mathbb Z)$. Then by Theorem \ref{thm:estimategeodbd}, $\mathrm{vol}(L) \geq 2V_8$.
\end{proof}

We also state the definition of an annular compression from \cite{agol2010minimal} which will help the estimation of volume.

Let $(X, \partial X) \subset (M,\partial M)$ be a properly embedded essential surface. A {\it compressing annulus} for $X$ is an embedding 
$$ i: (S^1\times I, S^1\times\{0\} , S^1\times \{1\}) \hookrightarrow (M, X, \partial M)$$
such that 
\begin{itemize}
\item
$i_{\ast}$ is an injection on $\pi_1$,
\item 
$i(S^1\times I) \cap X = i(S^1\times \{0\})$, and 
\item
$i(S^1\times\{0\})$ is not {isotopic} in $X$ to $\partial X$. 
\end{itemize}

An {\it annular compression} of $(X,\partial X) \subset (M,\partial M)$ is a surgery 
along a compressing annulus $ i: (S^1\times I, S^1\times\{0\} , S^1\times \{1\}) \hookrightarrow (M, X, \partial M)$ for $X$.

 Let $U$ be a regular neighborhood of $i(S^1\times I)$ in $M\spl X$, let $\partial_1 U$ be the frontier of $U$ in
$M\spl X$, and let $\partial_0 U=\partial U \cap (X\cup \partial M)$. Then let $X'= (X-\partial_0 U)\cup \partial_1 U$. 
The surface $X'$ is the \emph{annular compression} of $X$. We remark that if $X$ is essential, then $X'$ is as well.

\section{Sutured Manifolds and Sutured Guts}\label{sec:volume:sutured}

We call the guts defined in \cite{AZ1} \emph{sutured guts} since they are related to sutured manifolds. In this section, we set up some notation and terminology for the theory of sutured decompositions and sutured guts.

We classify the sutured guts components of a hyperbolic manifold of finite volume. 
\begin{definition}
	We call a sutured manifold $(N,R_+,R_-,\gamma)$ \emph{acylindrical} if $N$ is atoroidal and every essential annulus in $(N,R_+\cup R_-)$ is parallel in $N$ into $(\gamma,\partial \gamma)$ rel $R_+\cup R_-)$.
\end{definition}

\begin{proposition}[{Cf. \cite{agol2010minimal}}]\label{prop:gutcomponentinhyperbolic}  Let $M$ be a compact 3-manifold with toral boundary such that its interior is a hyperbolic manifold of finite volume. Each guts component with respect to a facet surface is one of the following types:
	\begin{enumerate}

\item a $T\times I$ with a boundary component as a sutured torus and two annuli on the other boundary component as sutures,
%$(T^2\times I,T^2\times \{0\} \cup \gamma)$, 

\item  a solid torus with 2 non-longitudinal sutures in the boundary,
%$(S^1\times D^2, \gamma)$,
\item 
a solid torus with 4 longitudinal sutures in the boundary, or
%$(S^1\times D^2, \gamma)$, 
\item an acylindrical taut sutured manifold.
\end{enumerate}

\end{proposition}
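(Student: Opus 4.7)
The approach is to apply the characteristic annulus decomposition to $M\spl X$ (where $X$ is the facet surface) via the pared manifold structure provided by Lemma \ref{lem:pared}, and then use the hyperbolicity of $\mathrm{int}(M)$ together with the definition of sutured guts from \cite{AZ1} to refine the pared classification into the four sutured types stated in the proposition.

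First I would let $G$ be a sutured guts component of $M\spl X$. Since $(M\spl X,\partial M\spl \partial X)$ is a pared manifold, the characteristic annulus decomposition recalled in Section \ref{sec:volume:pared} splits it into pieces of four pared types: $T^2\times I$ neighborhoods of toral parabolic locus, solid tori meeting the parabolic locus in annuli, $I$-bundles (the window), and acylindrical components. An acylindrical component, endowed with the sutured structure induced from the ambient sutured manifold, is a taut acylindrical sutured manifold and produces exactly type (4).

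Next I would use the fact that $M$ is atoroidal (since $\mathrm{int}(M)$ carries a finite-volume hyperbolic metric), so every essential torus is boundary parallel. This forces each Seifert-fibered characteristic piece to be either a solid torus or a $T^2\times I$, and in the latter case one of the two boundary tori must coincide with a cusp of $M$, hence be a toral suture of the induced sutured structure. For the remaining torus boundary of such a $T^2\times I$, if the induced sutures present the piece as a product sutured manifold then it is absorbed into the sutured window and is not a guts component; the minimal non-product configuration is exactly two parallel annular sutures on that torus, yielding type (1). For a solid torus piece, a parallel case analysis shows that two longitudinal sutures exhibit it as $(\text{annulus})\times I$ and absorb it into the window, while two non-longitudinal sutures yield type (2) and four longitudinal sutures yield type (3).

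The main technical step, and the one I expect to be most delicate, is verifying that no other suture configurations on a solid torus or on a $T^2\times I$ can appear as indecomposable sutured guts components. Concretely, for every other candidate pattern (a solid torus with more than four longitudinal sutures, a mix of slopes, or a $T^2\times I$ with more than two parallel annular sutures on the non-cusp torus) one must exhibit either an essential product disk or a non-peripheral essential product annulus. Such a product region either decomposes the piece into strictly simpler sutured pieces or forces it into the sutured window, and so by the maximality built into the definition of sutured guts in \cite{AZ1} the piece cannot occur as a guts component. Once this enumeration is carried out, the four types (1)--(4) exhaust all possibilities.
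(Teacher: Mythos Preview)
Your proposal has a genuine gap: the obstruction you name in the ``main technical step'' is the wrong one, and the correct obstruction---horizontal primeness---never appears in your argument.

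The sutured guts of \cite{AZ1} are not obtained from the pared characteristic annulus decomposition; they are the pieces that are \emph{reduced} (no essential product annulus) and \emph{horizontally prime} (every horizontal surface is parallel to $R_\pm$). The paper's proof uses exactly these two properties. The existence of a non-product essential annulus $A$ with $\partial A\subset R_+$ gives, via cut-and-paste on $R_+$, a horizontal surface $R'$; horizontal primeness forces $R'$ to be parallel to $R_-$, and this is what pins down the $4$-ST. Your route through the characteristic decomposition does not isolate the $4$-ST cleanly, since a $4$-ST contains essential non-product annuli and would be further cut by the characteristic annuli.

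More concretely, your claimed mechanism---finding ``an essential product disk or a non-peripheral essential product annulus''---fails for the key case you need to exclude. Take a $T^2\times I$ with one boundary torus as a toral suture and four parallel annular sutures on the other torus. All of $R_+\cup R_-$ lives on a single boundary torus, so any properly embedded annulus with one boundary in $R_+$ and one in $R_-$ is boundary-parallel; there is no essential product annulus or disk to find. What rules this configuration out is a non-boundary-parallel \emph{horizontal} annulus (Figure~\ref{fig:TI} in the paper), contradicting horizontal primeness. The same horizontal-surface argument, not a product-annulus argument, is what limits the solid torus case to two sutures when it is acylindrical. You need to invoke \cite[Lemma 3.6]{AZ1} and argue via horizontal surfaces rather than product regions.
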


 \begin{figure}[hbt]
 \begin{center}
 		\includegraphics[height = 1 in]{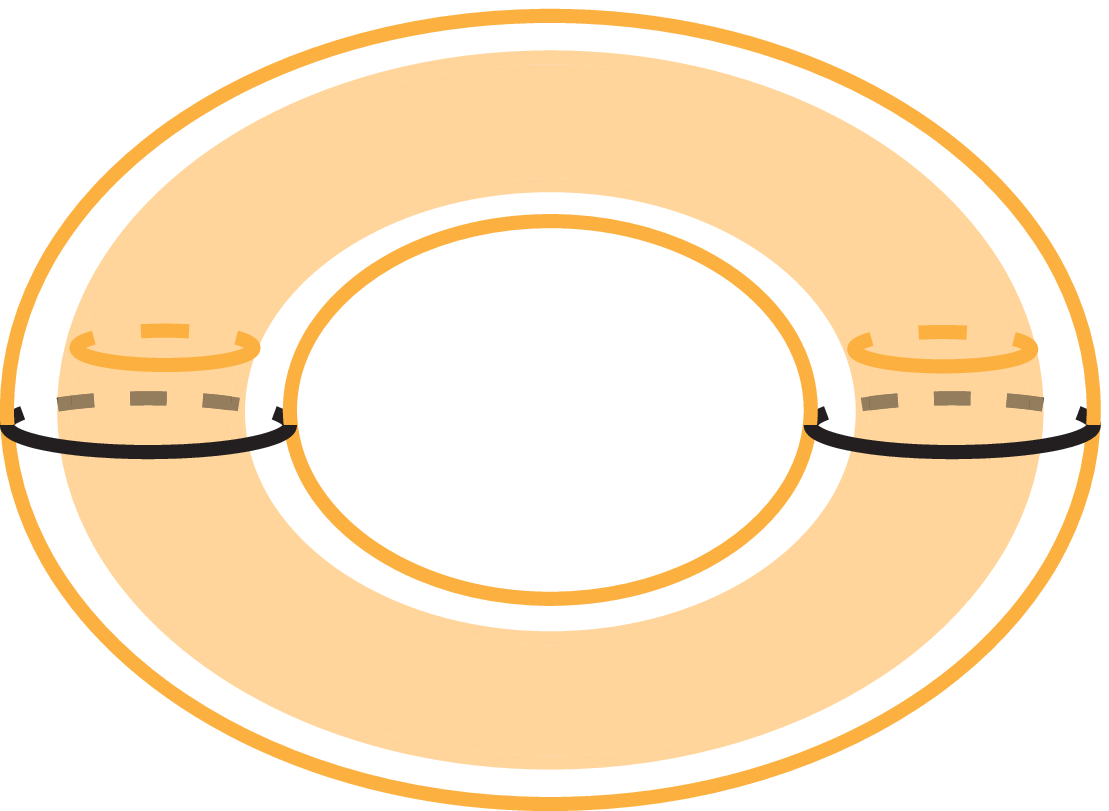}\quad
	\includegraphics[height = 1 in]{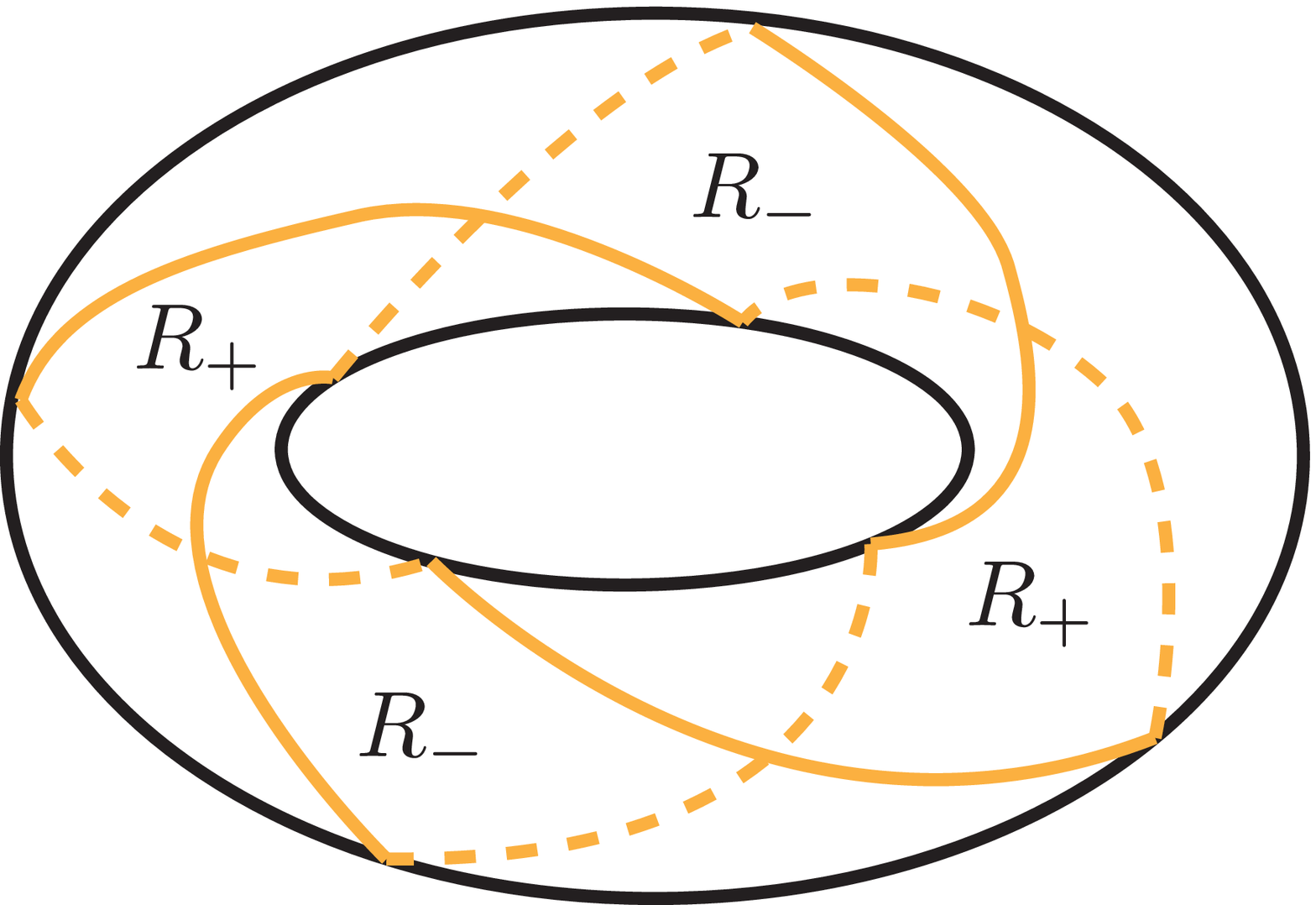}\quad
	\includegraphics[height = 1 in]{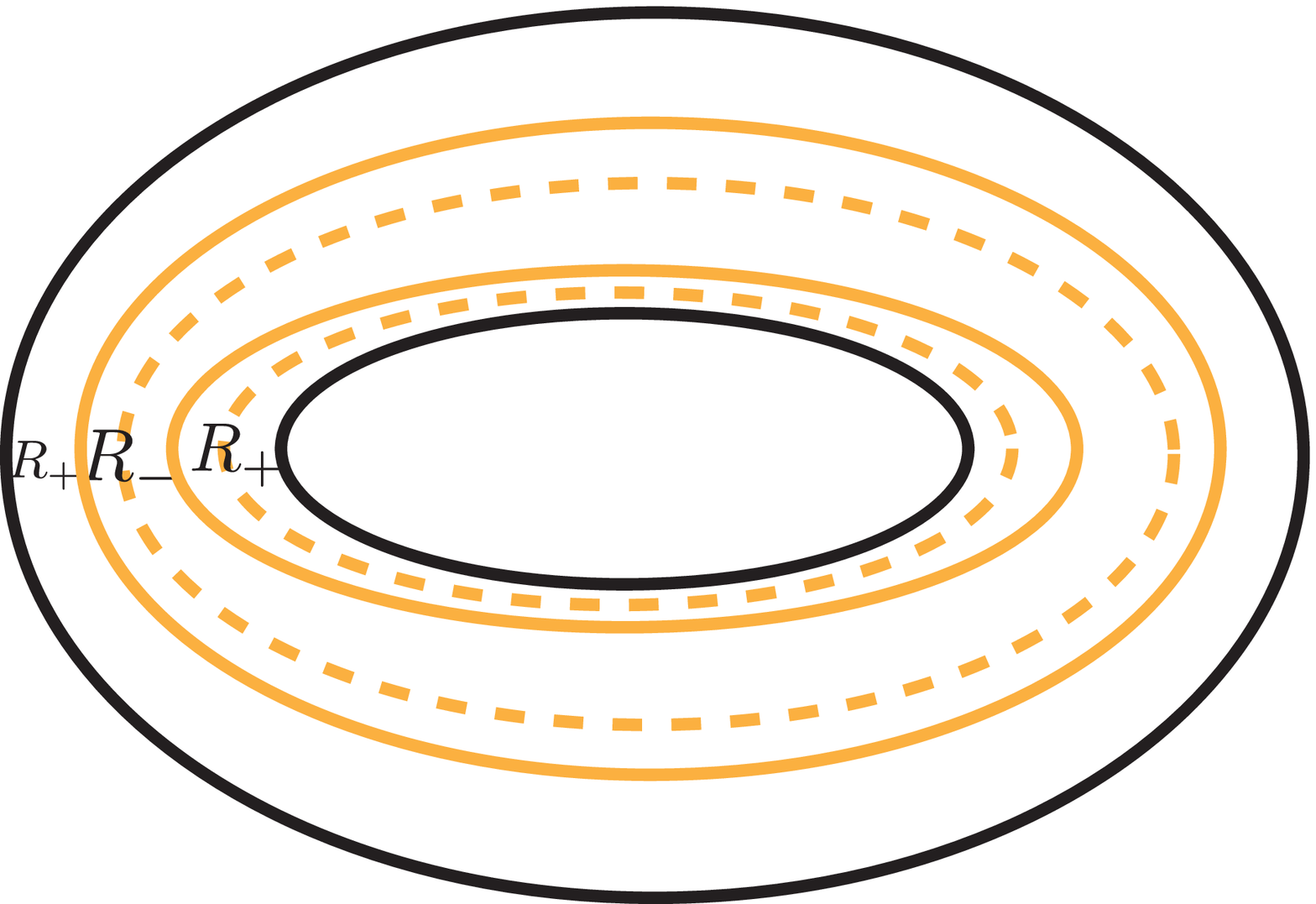}
	\caption{$T\times I$, 2-ST and 4-ST.}
 \end{center}

\end{figure}

\begin{proof}

	Let $(G,R_+,R_-,\gamma)$ be a guts component with respect to a facet surface $F$. Because of \cite[Lemma 3.6]{AZ1}, $G$ is reduced and horizontally prime. Hence it does not have essential product annulus.

	Suppose there is a non-product annulus $A$. Without loss of generality, we assume its boundary lies on $R_+$ of $G$. Then we do a cut-and-paste surgery for $R_+$ along $A$ to have $R'$. Because $R'$ is homologous to $R_+$ in $H_2(G,\gamma;\Z)$ and $\chi_-(R')= \chi_-(R_+)$, $R'$ is a horizontal surface in $G$. Because $G$ is horizontally prime, $R'$ is parallel to $R_-$. Then $\partial A$ induce product annuli that connect $R_+$ and $R_-$ and these annuli are parallel to the sutures of $G$. Therefore $R_+$ consists of 2 copies of annuli and $G$ is a solid torus with 4 longitudinal sutures. 
	 
	 In the following, we consider the case when $G$ is acylindrical. Hence by the statement about characteristic annuli in \cite[Section 2]{agol2010minimal}, $G$ is a $T^2 \times I$ which is a neighborhood of a component of $\partial M$, a solid torus with annuli in the boundary or an acylindrical pared manifold.
	 
	 If $G$ is a $T^2 \times I$, $G$ cannot have more than 2 annular sutures on a boundary component. Otherwise, there is a horizontal surface which is an annulus and not parallel to boundary. See Figure \ref{fig:TI}. Hence $G$ has a boundary component as a toral suture and two annuli on the other boundary component as annular sutures.
	 \begin{figure}[hbt]
	\begin{center}
  \includegraphics[width = 2 in]{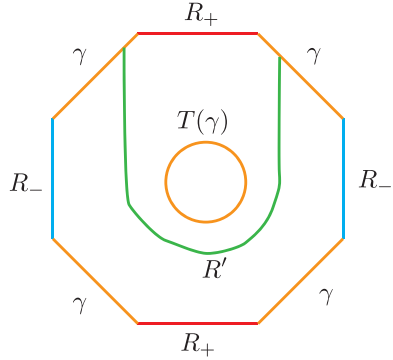}
  \caption{A cross section of $T\times I$. $R'$ is an annulus and not parallel to boundary} \label{fig:TI}
  \end{center}
\end{figure}

	 Let $G$ be a solid torus. If $G$ have more than 2 sutures, there is a non-trivial horizontal surface which is an annulus as in the case of $T^2 \times I$. Since $G$ is not a product sutured manifold, the sutures on $\partial G$ are not the longitudes of $G$. Hence $G$ is a solid torus with 2 non-longitudinal sutures in the boundary.
	 \end{proof}

In the following we sometimes use the core of an annulus to represent the annulus.

\begin{definition}
	We define a \emph{2-ST} as a solid torus with 2 non-longitudinal sutures in the boundary and a \emph{4-ST} as a solid torus with 4 longitudinal sutures in the boundary.
\end{definition}

\begin{definition}
	We call an element in $H_2(M,\partial M)$ \emph{a vertex element of the Thurston norm} if it is a multiple of a vertex of the Thurston sphere.
\end{definition}

We are interested in the sutured guts related to a vertex element because it has the most complexity.

\begin{lemma} \label{lem:connected}
	A norm-minimizing surface $S$ presenting a primitive vertex element of the Thurston norm is connected.
\end{lemma}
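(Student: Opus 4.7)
My plan is to argue by contradiction. Suppose $S = S_1 \sqcup S_2$ decomposes nontrivially as an oriented properly embedded surface, with $z := [S]$ the given primitive vertex class in $H_2(M,\bdry M)$, and let $z_i := [S_i]$, so $z = z_1 + z_2$. First I would observe that each $S_i$ must itself be norm-minimizing in the class $z_i$: otherwise, swapping $S_i$ for a smaller-norm representative would produce a representative of $z$ of strictly smaller total norm, violating norm-minimality of $S$. Writing $x$ for the Thurston norm, this yields the additivity
\[
x(z) = x(S_1) + x(S_2) = x(z_1) + x(z_2).
\]

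Next I would apply the vertex property of the Thurston unit ball. Let $v := z/x(z)$, which by hypothesis is a vertex of the unit ball $B_x$. Provided both $x(z_i) > 0$, we have
\[
v \;=\; \frac{x(z_1)}{x(z)} \cdot \frac{z_1}{x(z_1)} \;+\; \frac{x(z_2)}{x(z)} \cdot \frac{z_2}{x(z_2)},
\]
exhibiting $v$ as a convex combination of the points $z_i/x(z_i) \in B_x$. Extremality of the vertex $v$ forces $z_i/x(z_i) = v$ for $i=1,2$, so each $z_i$ is a positive rational multiple of $z$, say $z_i = a_i z$ with $a_i = x(z_i)/x(z)$ and $a_1 + a_2 = 1$. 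Primitivity of $z$ together with integrality of $z_i \in H_2(M,\bdry M;\Z)$ then forces $a_i \in \Z_{\ge 0}$, inconsistent with $a_1 + a_2 = 1$ and both $a_i > 0$. Hence at least one $x(z_i)$ must vanish; say $x(z_2) = 0$.

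Finally, $S_2$ is a norm-minimizing representative of $z_2$ with $x(S_2) = 0$, so each of its components is a sphere, disk, torus, or annulus. In the hyperbolic finite-volume setting of Proposition~\ref{prop:gutcomponentinhyperbolic}, irreducibility and boundary-irreducibility rule out essential sphere and disk components, and essential tori and annuli are peripheral---each such peripheral component bounds a product region cobounded by a component of $\bdry M$ and therefore represents $0$ in $H_2(M,\bdry M)$. Under the standard convention that a norm-minimizing surface is pruned of such trivially removable components, this forces $S_2 = \emptyset$, contradicting the assumption that $S_1, S_2$ are both non-empty.

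The main obstacle is the last step: controlling the zero-norm piece $S_2$. The cleanest way to address it is to rely on the fact that in a hyperbolic manifold of finite volume with toral boundary the Thurston seminorm, restricted to the integer lattice, is actually a norm, since its kernel is generated by classes of peripheral surfaces, all of which vanish in $H_2(M,\bdry M)$. Once this is in hand, the vertex property plus primitivity drive the argument mechanically.
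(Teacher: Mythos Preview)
Your argument is correct and follows essentially the same route as the paper: both derive the additivity $x(z)=x(z_1)+x(z_2)$ from norm-minimality of $S$, and then invoke the extremality of the vertex. The paper packages the second step differently, showing $x(a[S_1]+b[S_2])=ax([S_1])+bx([S_2])$ for all $a,b\ge 0$ and concluding that $z$ lies in the interior of a higher-dimensional linear region of the norm, contradicting the vertex condition; you instead write $z/x(z)$ as a convex combination of $z_i/x(z_i)$ and use extreme-point rigidity together with primitivity. Your version is more explicit about the degenerate case $x(z_i)=0$, which the paper leaves implicit (it tacitly uses that in a finite-volume hyperbolic manifold the Thurston seminorm is a norm and that taut representatives carry no null-homologous zero-complexity components). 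Either formulation is fine; the underlying idea is the same.
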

\begin{proof}
	If $S$ is not connected, $S$ consists of two surfaces $S_1$ and $S_2$. Then $x_M(S_i) \le \chi^+(S_i)$ and hence \[x_M(S) \le x_M(S_1) + x_M(S_2) \le \chi^+(S_1) + \chi^+(S_2) = \chi^+(S)= x_M(S)\] 
	which means $S_1$ and $S_2$ are norm-minimizing surfaces. For any $a \ge b\ge 0$, $a x_M(S_1) + bx_M(S_2) \ge x_M(aS_1 + bS_2) \ge x_M(a(S_1 + S_2)) - x_M((a-b)S_2) = ax_M(S_1) + bx_M(S_2)$ which means $x_M(aS_1 + bS_2) = ax_M(S_1) + bx_M(S_2)$. Similarly the equation works for $a < b$. However, this violates the condition that $[S]$ is a vertex element of the Thurston face.
\end{proof}

By Lemma \ref{lem:connected} and \cite[Lemma 3.7]{AZ1}, we have the following lemma.
\begin{lemma} \label{lem:label} 
	If $S$ is a facet surface for a vertex element of the Thurston norm, each component of $M\spl S$ contains exactly one guts component.

\end{lemma}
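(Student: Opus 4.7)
The plan is a short deduction from the two results cited in the hint. First I would invoke Lemma \ref{lem:connected} to conclude that the facet surface $S$ representing the primitive vertex class is connected. A connected facet surface gives a complement $M \spl S$ with either one or two components (depending on whether $S$ is separating or not), and the sutures on the resulting sutured manifold all come from $\partial S$, which sits inside $S$.

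Next I would apply \cite[Lemma 3.7]{AZ1}, which should be the key input identifying guts components with certain pieces of the horizontally prime decomposition. Concretely, I expect that lemma to say: the number of sutured guts components in a given piece of the cut-open manifold is bounded above by the number of components of the facet surface inducing the decomposition (or some equivalent statement tying extra guts components to a genuine decomposition of the underlying homology class). Once $S$ is connected, each component of $M \spl S$ therefore contains at most one guts component, and containment is nonempty because the chosen class is a vertex element, so the guts decomposition is nontrivial.

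For the uniqueness direction, the strategy I would follow (if not read off directly from \cite[Lemma 3.7]{AZ1}) is by contradiction: if a single component $C \subset M \spl S$ contained two distinct sutured guts components $G_1, G_2$, then the horizontal surface separating them inside $C$, when glued back across $S$, would express $[S]$ as a nontrivial sum $[S_1]+[S_2]$ of classes with $x_M(S_1)+x_M(S_2) = x_M(S)$. This contradicts $[S]$ being a vertex of the Thurston norm ball, exactly as in the proof of Lemma \ref{lem:connected}.

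The step that packages all the technical content is the appeal to \cite[Lemma 3.7]{AZ1}; the remaining work here is just to observe that a vertex class forces a connected facet surface and then to check that the one-per-component conclusion survives. The main potential obstacle is verifying that the horizontal surface separating two hypothetical guts components in $C$ truly descends to a norm-minimizing class realizing an affine combination of vertex classes, but this is precisely the content being imported from \cite[Lemma 3.7]{AZ1}, so the lemma reduces to a one-line application.
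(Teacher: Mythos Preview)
Your citations match the paper exactly: the proof there is the single sentence ``By Lemma~\ref{lem:connected} and \cite[Lemma~3.7]{AZ1}.'' However, your explanation of how these two inputs combine contains a misunderstanding of what a facet surface is.

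A facet surface $S=F(z)$ for a primitive vertex element $z$ is \emph{not} a single connected surface; it is a union $\Sigma_0\cup\cdots\cup\Sigma_{k-1}$ of pairwise non-isotopic norm-minimizing surfaces each representing $z$ (so that $[S]=kz$), and $M\spl S$ has $k$ components (the layers $\sL_i$), not one or two. Lemma~\ref{lem:connected} does not assert that $S$ is connected; it asserts that each individual $\Sigma_i$, being a norm-minimizing representative of the primitive vertex class $z$, is connected. This is the correct way to feed Lemma~\ref{lem:connected} into the argument: it gives that every layer $\sL_i$ has connected $R_+$ and connected $R_-$. The appeal to \cite[Lemma~3.7]{AZ1} then yields that each such layer carries exactly one guts component.

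Your backup contradiction argument is also off for the same reason: a horizontal surface separating two hypothetical guts components inside a layer would represent the primitive class $z$ itself, not produce a splitting $[S]=[S_1]+[S_2]$; the contradiction is rather with the maximality built into the definition of a facet surface (one would have found an additional non-isotopic $\Sigma_i$), which is what \cite[Lemma~3.7]{AZ1} packages. Once you correct the role of Lemma~\ref{lem:connected} to ``each $\Sigma_i$ is connected,'' your one-line reduction is exactly the paper's proof.
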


\begin{definition}
We use the notation $M \stackrel{S}{\overline \leadsto} N$ to denote that we do a sutured decomposition for $M$ along $S$ and throw away product sutured manifold components to have $N$. If $M$ and $N$ are taut, we call it a \emph{taut sutured decomposition}.

	We say a taut sutured manifold $M_0$ is \emph{depth} $k$ if there is a sequence of $k$ nontrivial taut sutured decompositions 
	\[
	M_0 \stackrel{S_1}{\overline \leadsto} M_1 \leadsto \cdots \stackrel{S_k}{\overline \leadsto} M_k.
	\]

\end{definition}

\section{Sutured Bundles and Library Bundles} \label{sec:Library Bundles} \label{sec:volume:library}

 In this section, we introduce some general settings that will help with analyzing the volume of hyperbolic manifolds.

\begin{definition} \label{def:suturedbundle}
	Let $M$ be an orientable, irreducible, $\partial$-irreducible manifold with toral boundary and $F$ be a properly norm-minimizing surface representing multiple of a primitive element $z$ of the Thurston norm. Then $[F] = kz$ for some positive integer $k$. 
	
	Choose a base point $p$. For a point $q$ in $M\backslash F$, we define the \emph{potential} function $\Phi(q)$ to be the intersection number of a curve connecting $p,q$ and $F$ mod $k$. The potential function $\Phi: M\backslash F \to \Z/k\Z$ is well-defined and we can assign each component of $M\spl F$ to a number in $\Z/k\Z$ (cf. \cite[Lemma 3]{gabai1986genera}). Then for a number $i$ in $\Z/k\Z$, we say the $i$-th \emph{layer} is the union of points in $M\spl F$ having the value of $\Phi$ as $i$. We think of a layer as a sutured manifold with $R_\pm$ as a union of components of $F$ and denote it as $\sL_i(F)$. 
	
	We say the structure defined here is a \emph{sutured bundle} structure for $M$ with respect to $F$. We let $\Sigma_{i}$ be the $R_-$ part of $\sL_i(F)$ and call it a \emph{horizontal surface} of the sutured bundle. Then $F = \cup_{i=0}^{k-1} \Sigma_{i}$.
	
\begin{figure}[hbt]
\includegraphics[height = 1.5 in]{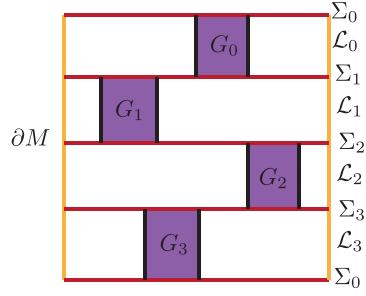}
\caption{Sutured bundle structure and horizontal surfaces}
\end{figure}	
	
\end{definition}

\begin{definition}[Cf. {\cite[Section 6.3]{agol2015certifying}}]\label{def:librarybundle}
	Let $\{\sL_{j}(F) | 0 \le i \le k-1 \}$ be a sutured bundle structure for $M$ with respect to $F$. We denote $G_i$ be the sutured guts component of $\sL_i(F)$. We call the union of $G_i$ as the \emph{spine} of the sutured bundle. We say $\Sigma_i$ is the \emph{horizontal surface} from $G_{i-1}$ to $G_i$. If each $G_i$ is a union of solid tori and $T\times I$'s, we say $F$ give $M$ a structure of a \emph{library bundle}.
\end{definition}

For a layer $\sL_i(F)$ related to a gut $G_i$ as a 4-ST, $\sL_i(F)$ is a union of $G_i$ and a product sutured manifold. We take a union $R'$ of two annulus in $G_i$ such that $R',\gamma$ and $R_-$ bound a product sutured manifold $R_-\times I$. Furthermore, $R'$ and $R_+$ bound a thickened annulus. See Figure \ref{fig:4ST}.

\begin{figure}[hbt]
\begin{center}
\includegraphics[width = 2 in]{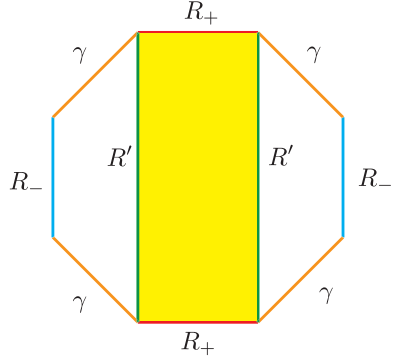}
  \caption{A cross section of a 4-ST.}\label{fig:4ST}
\end{center}
 
\end{figure}

Hence the layer comes from gluing a thickened annulus to $\Sigma_i\times I$ (or $\Sigma_{i+1}\times I$) via $\Sigma_i\times \{1\}$ (or $\Sigma_{i+1}\times \{0\}$). Furthermore, the two boundary curves of the annulus are nonparallel closed curves on $\Sigma_i$ (or $\Sigma_{i+1}$). In fact, the two boundary curves of the annulus are the cores of $R_+$ part of $G_i$ (or the $R_-$ part of $G_{i}$). 

For a layer related to a gut $G_i$ which is a 2-ST or a $T\times I$, it can be constructed as follows. We pick a nontrivial closed curve $c_i$ on $\Sigma_i$ and do a $\beta_i$ Dehn surgery on $\xi_i = c_i\times\{ 1/2 \}$ in $\Sigma_i \times I$ where $\beta_i = q_i/p_i$ with $|q_i|\ge 2$ when $G_i$ is a 2-ST or $\beta_i =\infty$ when $G_i$ is a $T\times I$ (See \cite[Example 7.1]{AZ1}).

\begin{remark}
	Let $\{\sL_{j}(F) | 0 \le i \le k-1 \}$ be a sutured bundle structure for $M$ with respect to $F$. If the guts $G_i$ of $\sL_{i}(F)$ does not intersect the guts $G_{i+1}$ of $\sL_{i+1}(F)$, we can extend a boundary component of each sutured annulus of $G_i$ to $\sL_{i+1}(F)$ and denote the union of the new annuli as $A$. Then by cutting $\sL_i(F(z)) \uplus \sL_{i+1}(F(z))$ along $A$, we will have $G_i$ and its complement up to homeomorphism.
\end{remark}

\begin{definition}
	We think of $R_+(G_i)$ and $R_-(G_{i+1})$ are both subsurfaces of $\Sigma_{i+1}$. If $R_+(G_i)$ is disjoint with $R_-(G_{i+1})$, we can do a cut-and-paste operation for $F$ to have a new $F'$ by replacing $\Sigma_{i+1}$ with $\Sigma_{i+1}\backslash( R_-(G_{i+1}) \cup R_+(G_i))$ glued by $( R_+(G_{i+1}) \cup R_-(G_i))$ and the annular components of $(\gamma(G_{i+1})\cup\gamma(G_{i}))$. We can relabel the guts with respect to $F'$ so that we have $G_0,\ldots,G_{i+1},G_{i},\ldots,G_{k-1}$ in sequence. In the following, we mean \emph{exchanging} $G_i$ and $G_{i+1}$ by this specific operation. We say $G_i$ and $G_{i+1}$ is \emph{exchangeable} if we can exchange $G_i$ and $G_{i+1}$.
	
\begin{figure}[hbt]
\begin{subfigure}[t]{.25\textwidth}
\begin{center}
\includegraphics[width=1.3 in]{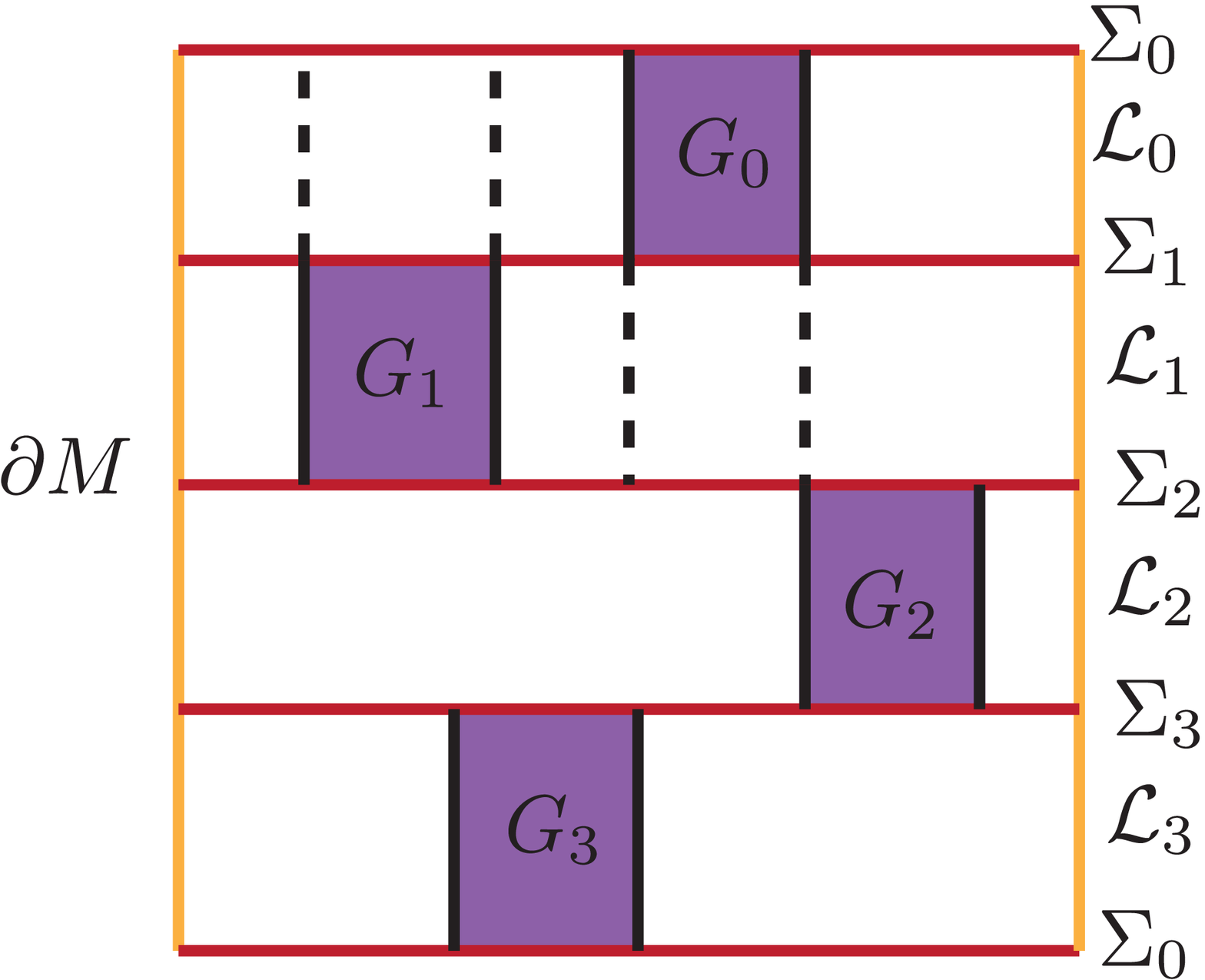}
\caption{}
\end{center}
\end{subfigure}%
\begin{subfigure}[t]{.25\textwidth}
\begin{center}
\includegraphics[width=1.3 in]{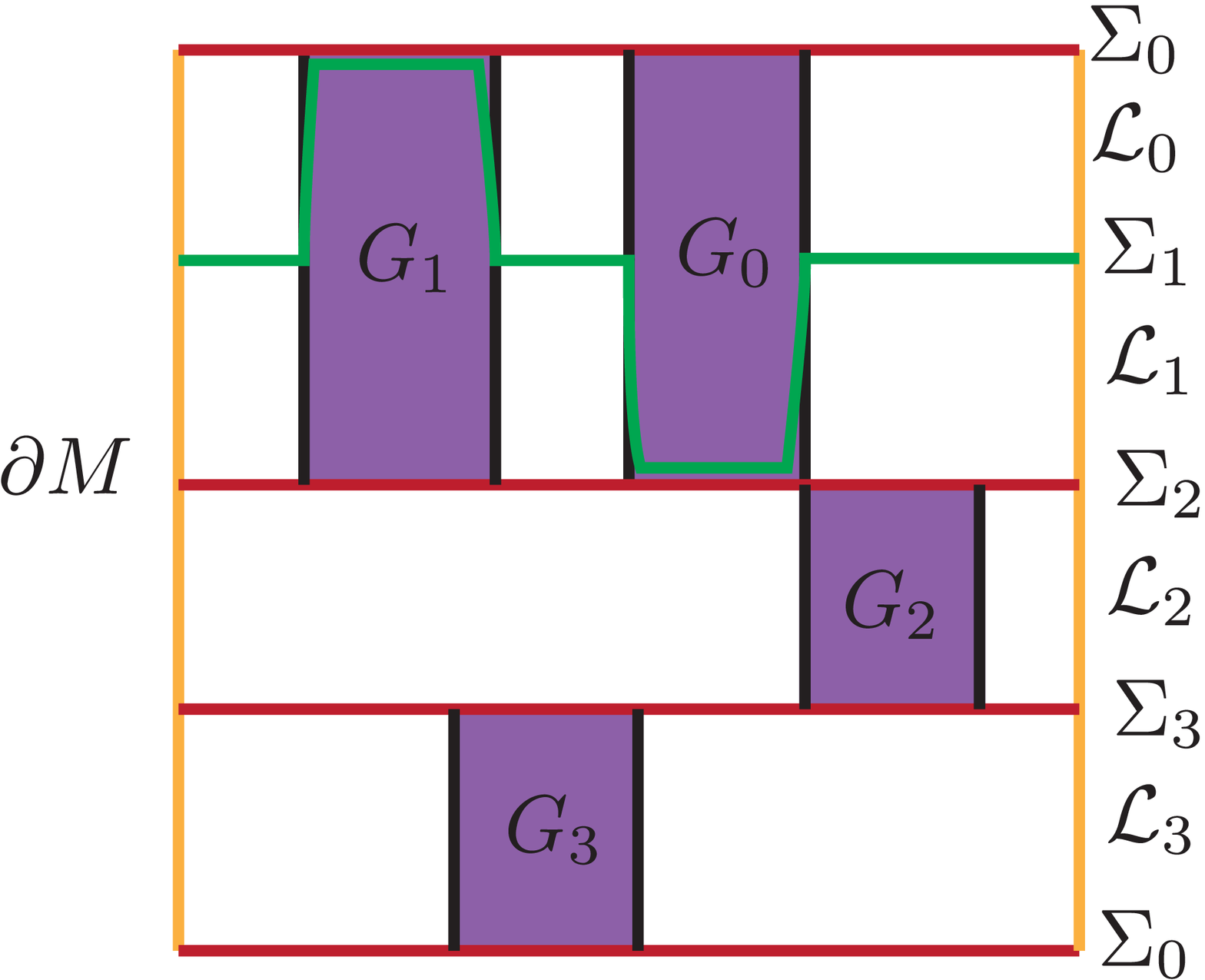}
\caption{}
\end{center}
\end{subfigure}%
\begin{subfigure}[t]{.25\textwidth}
\begin{center}
\includegraphics[width=1.3 in]{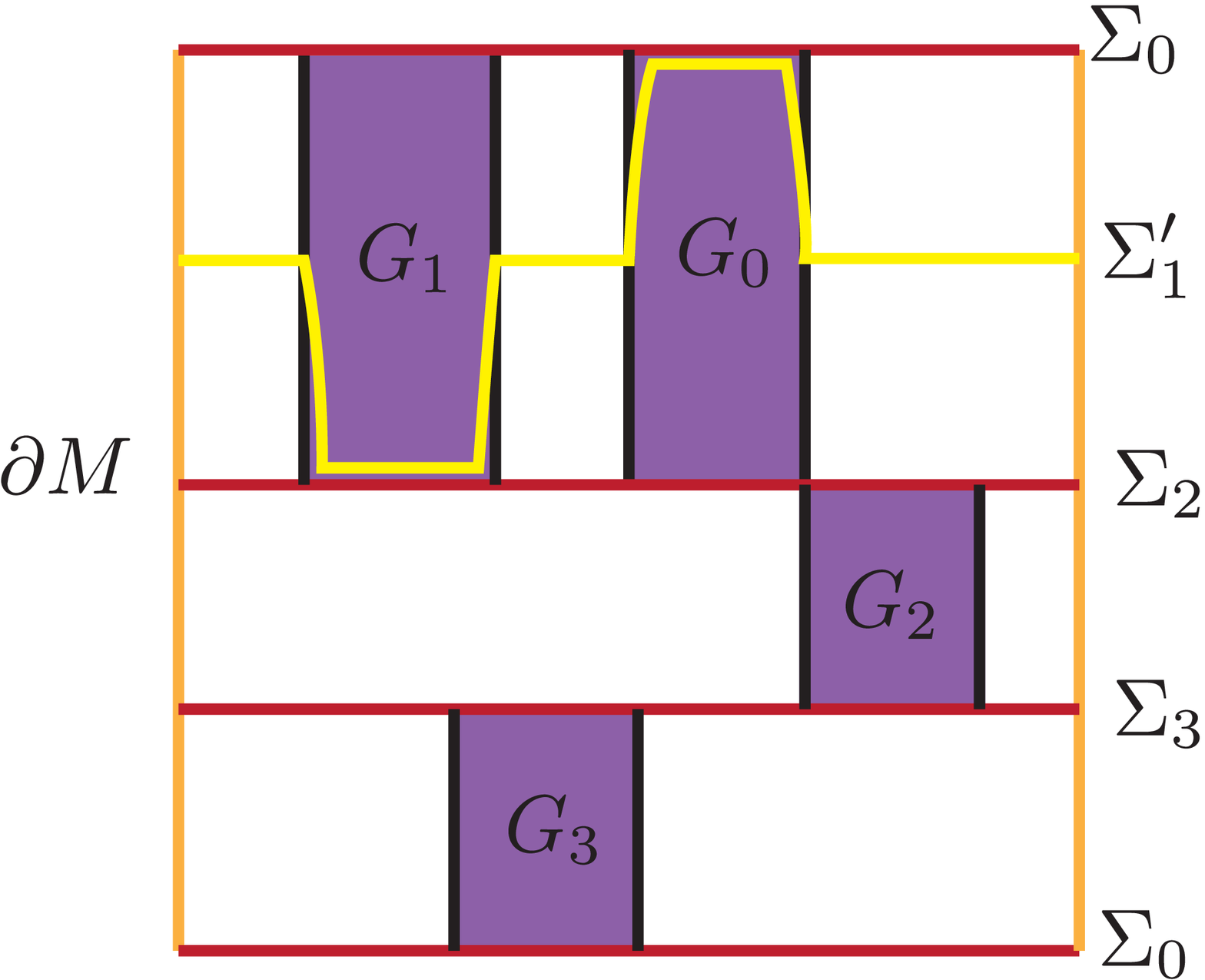}
\caption{}
\end{center}
\end{subfigure}%
\begin{subfigure}[t]{.25\textwidth}
\begin{center}
\includegraphics[width=1.3 in]{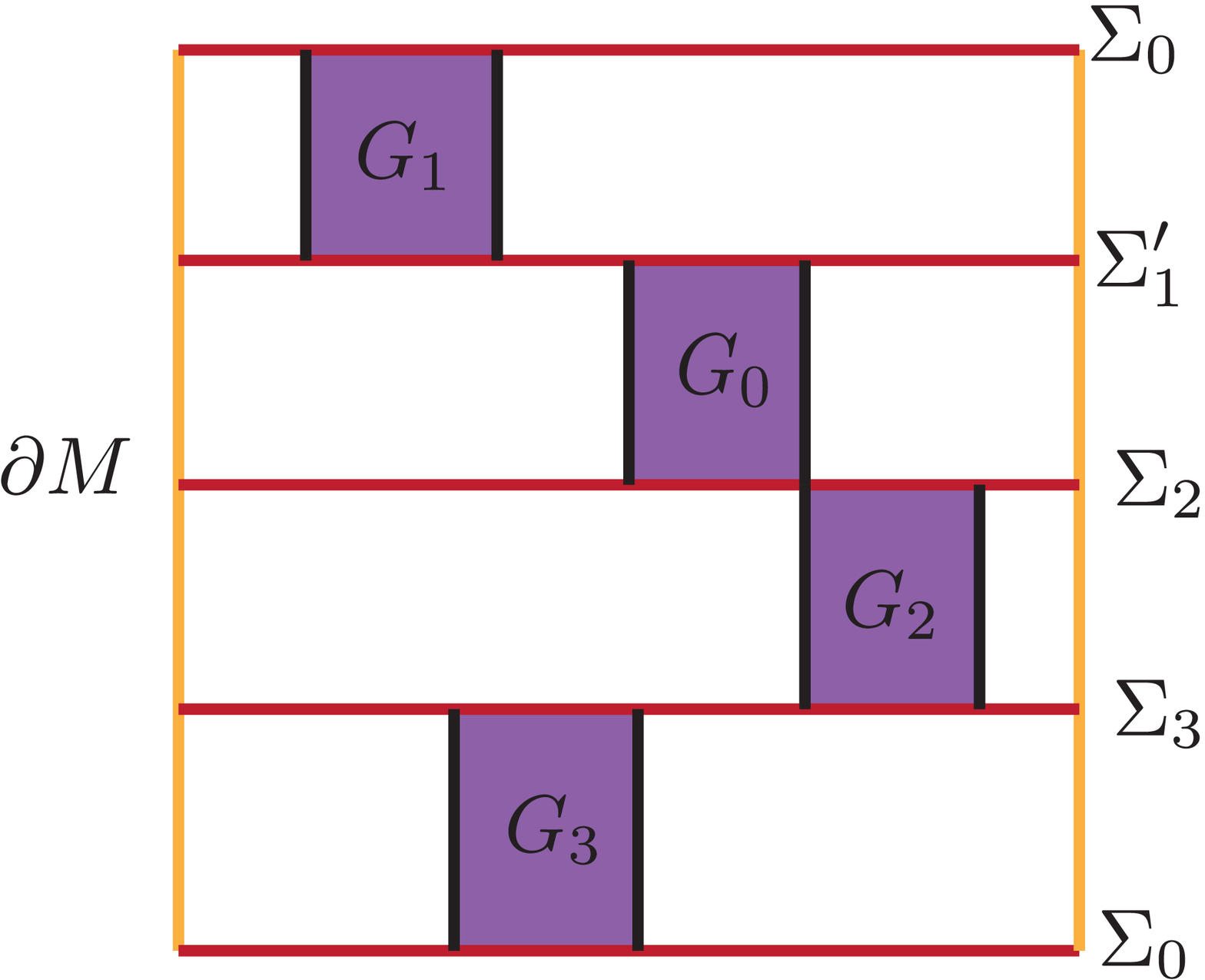}
\caption{}
\end{center}
\end{subfigure}%  

\caption{A schematic example of exchanging $G_0$ and $G_1$.} 
\end{figure}

\end{definition}

  For a {sutured bundle} structure with respect to $F$, we say a sutured guts component $G$ is \emph{next} to another $G'$ if $\Phi(G') =\Phi(G) + 1$, i.e., $G_{i+1}$ is \emph{next} to $G_i$ where we think of $G_k$ as $G_0$. We say that $G_i$ \emph{intersects} $G_{i+1}$ if $R_+(G_i)$ intersects $R_-(G_{i+1})$ in $\Sigma_{i+1}$ when they are isotoped to intersect minimally in $\Sigma_{i+1}$.

We define a map which relates $R_+$ and $R_-$ of a taut sutured manifold.

\begin{definition}\label{def:mapofR+R-}
Let $(M,R_\pm,\gamma)$ be a taut sutured manifold and $\{G_i | 0 \le i \le k-1\}$ are the sutured guts components of $M$. Let $X$ be a subsurface in $R_-$. We say $X$ is a \emph{step surface} if it either contains $R_-(G_i)$ or is disjoint with $R_-(G_i)$ for each $i$. Since $M \backslash G$ is a product sutured manifold, there is a natural projection from its $R_-$ to its $R_+$. We let $X'_1$ be the image of $X \backslash R_-(G)$ under this projection and glue $X'_1$ with $R_+(G_i)$ that are contained in $X$ to have $X'$. We define $\psi(X) \stackrel{\Delta}{=} X'$ and call it the \emph{ascending map} $\psi$ of $M$. Note that $\psi$ preserves the Euler characteristic and the genus.
\end{definition}

\begin{definition}
	Let $\{\sL_{j}(F) | 0 \le i \le k-1 \}$ be a sutured bundle structure for $M$ with respect to $F$. For a set of layers $\{\sL_{j}(F) | j\in J \}$, we denote $\uplus_{j\in J} \sL_{j}(F(z))$ as the sutured manifold obtained from gluing these $\sL_{j}(F)$ along $R_\pm$ if they coming from the same components of $F$.
\end{definition}

We have the following lemma to help producing some complicated pared guts.

\begin{lemma} \label{lem:intersects} Let $M$ be a hyperbolic manifold of finite volume. Given a sutured bundle structure for $M$ with respect to $F$, for an $i$ with $0 \le i \le k-1$, we can exchange $G_i$ with $G_{i+1},\ldots,G_{k-1},G_0,G_1,\ldots$ in sequence finitely many times so that in the new sutured bundle structure, $G_i$ intersects the guts component next to $G_i$.
\end{lemma}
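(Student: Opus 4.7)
The strategy is proof by contradiction. Suppose the sequence of exchanges starting at $G_i$ can be iterated without $G_i$ ever meeting its new successor in the shared horizontal surface. Since each exchange is an adjacent transposition in a cyclic order of $k$ objects, there are only finitely many reachable cyclic configurations; so the infinite non-blocking assumption forces a loop in the exchange graph, which implies that $G_i$ is exchangeable past every other guts component $G_{i+1},\ldots,G_{i-1}$ in sequence.

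The first step is to iterate the Remark preceding the definition of exchangeability. Each individual exchangeability produces an annular surface in the union of two adjacent layers that splits off $G_i$ from its neighbor up to homeomorphism. Chaining these annuli through the full cycle, and verifying that each stage's cut-and-paste surgery preserves the disjointness needed at later stages (the modification occurs only in the two layers being exchanged, and the disjointness of $R_\pm(G_i)$ from the $R_\mp$'s of every other guts is preserved throughout), I would assemble a properly embedded essential surface $A\subset M$, a disjoint union of annuli, that simultaneously separates $G_i$ from every other guts component.

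The second step extracts a topological contradiction. By Lemma \ref{lem:pared}, $(M\spl A,\partial M\spl\partial A)$ is a pared manifold, and the component containing $G_i$ is now $G_i$ joined along annuli to a product sutured region with no other nontrivial guts. Passing to the infinite cyclic cover $\widetilde M\to M$ corresponding to $z$, the $\Z$-periodic union of translates of $A$ together with the product structure between consecutive translates produces either an essential non-peripheral torus in $M$ or an essential product annulus inside $G_i$ itself. The former contradicts the atoroidality of the finite-volume hyperbolic manifold $\mathrm{int}(M)$; the latter contradicts the horizontal primeness of $G_i$ as a guts component established in the proof of Proposition \ref{prop:gutcomponentinhyperbolic}.

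The main obstacle is the assembly in the second paragraph. Each exchange modifies the facet surface, so the annulus produced at stage $j$ naturally lives in the intermediate surface $F^{(j)}$ rather than in the original $F$. Verifying that these successively defined annuli can be simultaneously realized as pairwise disjoint essential annuli in $M$ requires careful bookkeeping of how each cut-and-paste interacts with annuli built at earlier stages, and uses crucially that the supports of $R_\pm(G_i)$ remain disjoint from the corresponding supports of every other guts throughout the entire cycle. Once the global $A$ is in hand, the appeal to Lemma \ref{lem:pared} and the cyclic-cover argument yielding the geometric contradiction are routine.
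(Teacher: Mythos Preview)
Your overall strategy---assume infinite exchangeability and extract an essential torus via the cyclic cover---matches the paper, but the mechanism you propose has a genuine gap. The annular surface $A\subset M$ you want to assemble does not close up: after one full cycle of exchanges the sutured annuli of $G_i$ extend to a union $A_0$ of product annuli in $M\spl\Sigma_0$ with $\partial A_0\subset\Sigma_0$, but the boundary circles of $A_0$ coming from above and from below $\Sigma_0$ are \emph{a priori} different isotopy classes in $\Sigma_0$, so neither $A_0$ nor its $\Z$-translates in $\tilde M$ glue to anything closed. (Your ``finitely many cyclic configurations'' remark is also off: an exchange modifies the facet surface itself, not just a labeling of $k$ objects, so there are infinitely many configurations; this is harmless, since the non-blocking hypothesis already yields exchangeability past every $G_j$ directly, without any loop argument.)

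The missing idea, which the paper supplies, is a \emph{height versus Euler characteristic} pigeonhole in the cover. One defines \emph{stair surfaces} $\tilde\Sigma_n\subset\tilde M$---built from step surfaces in the lifts $\tilde K^j$ of $\Sigma_0$ together with product annuli in the lifted layers---and shows that each successive exchange produces a $\tilde\Sigma_n$ of height $n$ (meeting both $\tilde K^0$ and $\tilde K^n$) with $\chi(\tilde\Sigma_n)=\chi(\Sigma_0)$ fixed. Since a surface of fixed Euler characteristic can have non-annular intersection with at most $|\chi(\Sigma_0)|$ of the levels $\tilde K^j$, for $n$ large there is a long run of consecutive levels where each $\tilde\Sigma_n\cap\tilde K^j$ is a union of annuli. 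Projecting to $M$ gives many pairwise disjoint product annuli in $M\spl\Sigma_0$; since $\Sigma_0$ carries only finitely many disjoint non-isotopic simple closed curves, two of these annuli are parallel, and the annuli between them glue to an essential torus in $M$, contradicting atoroidality. This height/pigeonhole step is precisely what converts ``infinitely many exchanges'' into ``essential torus'', and your sketch does not contain it.
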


\begin{proof}
	Without loss of generality, we assume $G_i$ is $G_0$. Suppose the lemma is not true for $G_0$. Since we can exchange $G_0$ with $G_1$, this means $R_+(G_0)$ is disjoint with $R_-(G_{1})$. Hence we can extend the sutured annuli of $G_0$ and $G_1$ to have $A$ so that its boundary lies on $\Sigma_0$ and $\Sigma_2$. If we cut $\sL_{0}(F)\uplus \sL_{1}(F) $ along $A$, we will have $G_0$, $G_1$ and some product sutured manifolds. Then we can replace $\Sigma_1$ by $\Sigma_2\backslash (R_+(G_0))$ glued by $R_-(G_0)$ to exchange $G_0$ and $G_1$. Since we can still exchange $G_0$ with $G_2,\ldots ,G_{k-1}$, we can extend the annuli sutures of $G_0$ so that its boundary lies on $\Sigma_0$, which means we have a union $A_0$ of product annuli in $M\spl \Sigma_0$ such that it separates $G_0$ and its complement.
	
	Let $\sL_{-1}(F)$ be the sutured manifold by gluing $\sL_{1}(F),\ldots,\sL_{k-1}(F)$, i.e., $M\spl \sL_{0}(F)$ and $G_{-1}$ be the guts of $\sL_{-1}(F)$. If we exchange $G_0$ with $G_1, \ldots ,G_{k-1}$ in sequence, we exchanging $G_0$ with $G_{-1}$ with respect to $\Sigma_0\cup \Sigma_1$.
	
	Let $\sL \stackrel{\Delta}{=} M\spl \Sigma_0$ and $\psi$ be the ascending map in Definition \ref{def:mapofR+R-}. Then the guts of $\sL$ are $G_0$ and $G_{-1}$. Let $R_{\pm,i}$ and $A_{i}$ be the $R_\pm$ and the sutured annuli of $G_i$ respectively for $i = 0,-1$. Then $\Sigma_1$ is $(\Sigma_0 \backslash R_{-,0})\cup (\psi(R_{-,0}) \cup A_{0}) = (\Sigma_0 \backslash R_{-,0})\cup (R_{+,0} \cup A_{0})$ up to isotopy.
	
	 We consider the cyclic covering $\tilde M$ of $M$ with respect to $\Sigma_0$. Let $\tilde \sL^0$ be a lifting of $\sL = M\spl \Sigma_0$ in $\tilde M$. We let $\tilde G_i^0$, $\tilde R^0_{\pm,i}$ and $\tilde A^0_{i}$ be the liftings of $G_i$, $R_{\pm,i}$ and $A_{i}$ in $\tilde \sL^0$ respectively for $i=0,-1$. Choose the deck transformation $\pi$ that $\pi$ maps the $R_-$ of $\tilde \sL^0$ to the $R_+$ of $\tilde \sL^0$. Here we think of all $R_-$ and $R_+$ are subsurfaces of liftings of $\Sigma_0$. Then we define $\tilde \sL^j$, $\tilde G_i^j$, $\tilde R^j_{\pm,i}$ and $\tilde A^j_{i}$ to be $\pi^j(\tilde \sL^0)$, $\pi^j(\tilde G_i^0)$, $\pi^j(\tilde R^0_{\pm,i})$ and $\pi^j(\tilde A^0_{i})$ respectively. Let $\tilde K^j$ be the $R_-$ of $\tilde \sL^j$ and $\tilde W^j$ be $\tilde \sL^j \backslash (\tilde G_0^j \cup \tilde G_{-1}^j)$. So $\{\tilde K^j\}$ are the liftings of $\Sigma_0$ and $\tilde W^j$ is a sutured product manifold.

\begin{definition}
	Let $X$ be a surface in $\tilde K^j$. We say $X$ is a \emph{step surface} if either contains $\tilde R^j_{-,i}$ or is disjoint with $\tilde R^j_{-,i}$ for $i=0,-1$.
\end{definition}

\begin{definition}\label{def:stairsurface}
	Let $X$ be a surface in $\tilde M$. We say $X$ is a \emph{stair surface} if it can be decomposed into the following parts: product annuli in $\tilde L^j$ for $j \in \Z$ and step surfaces in $\tilde K^j$ for $j \in \Z$. We call the \emph{height} of $X$ as the maximal $j$ such that $X \cap \tilde K^j$ is non-empty.
\end{definition}

We let $\psi_j$ be the ascending map for $\tilde \sL^j$ as in Definition \ref{def:mapofR+R-}.

If we exchange $G_0$ with $G_{-1}$, we actually replace $\Sigma_{1}$ with another surface $\Sigma'_{1} \stackrel{\Delta}{=} R_{-,0} \cup A_{0} \cup \psi(\Sigma_0 \backslash R_{-,0}) $. Let $\tilde \Sigma_0$ be $\tilde K^0$ and $\tilde \Sigma_{1}$ be $\tilde R^0_{-,0} \cup \tilde A_{0} \cup \psi_0(\tilde \Sigma_0 \backslash \tilde R^0_{-,0}) $. Then $\tilde \Sigma_0$ and $\tilde \Sigma_{1}$ are liftings of $\Sigma_0$ and $\Sigma'_1$. Furthermore, $\tilde \Sigma_0$ and $\pi(\tilde \Sigma_0)$ bounds a lifting of $\sL$ which contains $\tilde \Sigma_1$. Between $\tilde \Sigma_0$ and $\tilde \Sigma_1$ there is a guts component $\tilde G^0_0$ and between $\tilde \Sigma_1$ and $\pi(\tilde \Sigma_0)$ there is a guts component $\tilde G^0_{-1}$.

Since we can perform exchanging $G_0$ with $G_{-1}$ infinitely many times, we have a sequence of decomposition surfaces $\Sigma_0,\Sigma'_1,\Sigma'_2,\ldots$, where $\Sigma'_i,\Sigma'_{i+1}$ cut $M$ into two sutured manifolds each of which contains the sutured guts $G_0$ and $G_{-1}$ respectively.

Now, we perform a sequence of cut-and-paste operations to obtain a sequence of stair surfaces $\tilde \Sigma_0, \tilde \Sigma_1,\ldots$, where $\tilde \Sigma_{i+1}$ is obtained from $\tilde \Sigma_{i}$ by a cut-and-paste operation. These sequence has the property that $\tilde \Sigma_{i}$ is a lifting of $\Sigma'_i$ (here $\Sigma'_0 =\Sigma_0$), the height of $\Sigma'_i$ is $i$, between $\tilde \Sigma_{i+1}$ and $\pi(\tilde \Sigma_i)$ there is a guts component $\tilde G^0_0$ and between $\pi(\tilde \Sigma_i)$ and $\pi(\tilde \Sigma_{i+1})$ there is a guts component $\tilde G^{i+1}_{-1}$. See Figure \ref{fig:intersects}.

\begin{figure}[hbt]
\begin{center}
	 \includegraphics[width = 3 in]{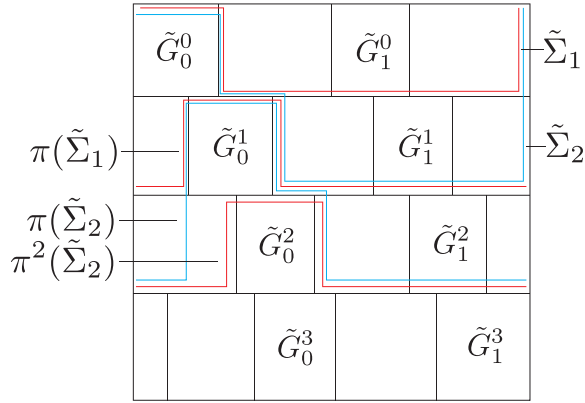}
  \caption{Stair surfaces.}\label{fig:intersects}

\end{center}
 \end{figure}

Let $\tilde \Sigma_{i}, \tilde \Sigma_{i+1}$ be in the sequence. We are going to construct $\tilde \Sigma_{i+2}$ from $\tilde \Sigma_{i+1}$. Since $G_0$ and $G_{-1}$ can be exchanged with respect to $ \Sigma'_{i}\cup \Sigma'_{i+1}$, $\tilde G^0_0$ and $\tilde G^{i+1}_{-1}$ can be exchanged. Hence the sutured annuli of $\tilde G^0_0$ can be extended to meet $\pi(\tilde \Sigma_{i+1})$ and the union $A_{i+1}$ of these extended annuli decompose the sutured manifold between $\tilde \Sigma_{i+1}$ and $\pi(\tilde \Sigma_{i+1})$ into two subset $G$ and $G'$, where $G$ is a sutured manifold equivalent to $\tilde G^0_0$. Furthermore, $R_+(G)$ is disjoint with $R_+(\tilde G^{i+1}_{-1})$ in $\pi(\tilde \Sigma_{i+1})$. We let $\tilde \Sigma_{i+2}$ be $(\tilde \Sigma_{i+1} \backslash R_-(G')) \cup A_{i+1} \cup R_+(G') $ and isotope it to be a stair surface. Since $R_+(G')$ is in $\pi(\tilde \Sigma_{i+1})$ and $R_+(\tilde G^{i+1}_{-1}) \subset R_+(G')$, $R_+(G')$ has height $i+2$. Since $\tilde \Sigma_{i+2} \cap \tilde K^{i+2} = R_+(G') \cap \tilde K^{i+2}$, $\tilde \Sigma_{i+2}$ has height ${i+2}$ too. Since $R_-(G)$ contains $R_-(\tilde G^0_0)$, $\tilde \Sigma_{i+2} \cap \tilde K^0$ is not empty.

So we have an infinite sequence $\tilde \Sigma_0, \tilde \Sigma_1,\ldots$ such that $\tilde \Sigma_i$ has height $i$ and $\tilde \Sigma_i \cap \tilde K^0 \ne \emptyset$. Since $\chi(\tilde \Sigma_i)= \chi(\tilde \Sigma_0) \stackrel{\Delta}{=} C$ is a constant, there is at most $|C|$ choices of $j$ such that $\tilde \Sigma_i \cap \tilde K^j$ is not a union of annuli. For any integer $n$, we take $i > (n+2)|C|$. Then there is an integer $m$ such that each of $\tilde \Sigma_i \cap \tilde K^j$ for $m \le j \le m+n$ is a union of annuli. Let $\tilde A_j$ be a product annulus in $\tilde \sL^j$ for $m \le j \le m+n$ such that $\tilde A_m, \tilde A_{m+1},\ldots, \tilde A_{m+n}$ can be connected to be an annulus. Because $\tilde \Sigma_i$ is a lifting of $\Sigma_i$, the images of $\tilde A_m, \tilde A_{m+1},\ldots, \tilde A_{m+n}$ under the projection to $M$ are disjoint product annuli in $\sL$. Because there are finitely many disjoint non-isotopic closed curves in a compact surface, when $n$ is big enough, there are $A_{m+d_1}$ and $A_{m+d_2}$ which are parallel. Then by gluing $A_{m+d_1},\ldots,A_{m+d_2-1}$, we have an essential torus in $M$ which is a contradiction to the atoroidality of $M$.

Therefore, we prove Lemma \ref{lem:intersects}.

\end{proof}

\begin{definition}[{\cite[Section 6.3]{agol2015certifying}}]
We call a taut sutured manifold $(M,R_\pm,\gamma)$ a \emph{library sutured manifold} if there is a taut surface $(\Sigma ,\partial \Sigma ) \subset (M,\eta(\gamma))$ such that $[\Sigma] = n[R_+] \in H_2(M,\eta(\gamma);\Z)$ for some $n \ge 0$, and the sutured manifold $M \backslash \Sigma$ is a book of $I$-bundles. We say that $\Sigma$ is a \emph{shelf surface} of $M$ and $M$ has height $(n+1)$.

As in Definition \ref{def:suturedbundle}, choose a base point $p$ close to $R_-$ and for a point $q$ in $M\backslash \Sigma$ we define $\Phi(q)$ to be the intersection number of a curve connecting $p,q$ and $\Sigma$ mod $k$. The \emph{potential} function $\Phi: M\backslash \Sigma \to \Z$ is well-defined and we can assign each component of $M\spl \Sigma$ to a number in $\Z$. Then for a number $i$ in $\Z$, we say the $i$-th \emph{layer} is the union of points in $M\spl \Sigma$ having the value of $\Phi$ as $i$. We think of the layer as a sutured manifold with $R_\pm$ as a union of components of $\Sigma$ and denote it as $\sL_i(\Sigma)$. We let $\Sigma_{i}$ be the $R_-$ part of $\sL_i(F)$ and call it a \emph{horizontal surface} of the library sutured manifold. 

	Given a library $M$ with a shelf surface $\Sigma$, We denote $G_i$ as the sutured guts component of $\sL_i(\Sigma)$. We call the union of $G_i$ the \emph{spine} of the sutured bundle. 		
\end{definition}
\begin{definition}\label{def:libroid}
In a 3-manifold $M$, we say that a taut surface $S \subset M$ is a \emph{libroid surface} if $M \backslash S$ is a library sutured manifold. An element $z$ in $H_2(M,\partial M)$ is \emph{libroid} if there is a libroid surface representing $z$. This is equivalent as saying $\Gamma(z)$ consists of solid tori and $T\times I$'s.

\end{definition}
\begin{definition}
	We use a diagram which we call \emph{library diagram} to represent the relation of the sutured guts in a library sutured manifold $M$ with a shelf surface $\Sigma$. Let $\sL_i(\Sigma)$ be the layers and $G_{i,j}$ be the guts components of $\sL_i(\Sigma)$. For each $i$, we place all $G_{i,j}$ vertically and if $G_{i,j}$ intersects $G_{i+1,j'}$, we draw an arrow from $G_{i,j}$ to $G_{i+1,j'}$. Worth to be mentioned, the non-existence of an arrow from $G_{i,j}$ to $G_{i+1,j'}$ does not indicate whether $G_{i,j}$ intersects $G_{i+1,j'}$.
	
	For example, in the following diagram, $G_{0,0}$ and $G_{0,1}$ are in the same layer and $G_{0,0}$ intersects $G_{1,0}$.
	\begin{equation*}
		\xymatrix{
G_{0,0} \ar[r] &   G_{1,0}     \\
G_{0,1} \ar[ur] & 
 }
\end{equation*}
	
In the same way, we can also use the library diagram to represent the relation of the sutured guts in a library bundle $M$ with respect to $F$.

\end{definition}

\begin{definition}[{\cite[Section 4.3]{agol2015certifying}}]We call a taut sutured manifold $(M,R_\pm,\gamma)$ a \emph{book of $I$-bundle} if the sutured guts of $M$ consists of solid tori and $T\times I$'s.

\end{definition}

We will frequently use the following definitions in finding the pared guts of a library sutured manifold.

\begin{definition} \label{def:essential}
Let $S$ be a compact surface and $X$ be a subsurface of $S$. We say $X$ is \emph{essential} if no component of $\partial X$ bounds a disk in $S$. For a subset $Y$ of $S$, we say $X$ is the \emph{minimal essential subsurface containing $Y$} if $X$ is essential and contains $Y$ with the maximal Euler characteristic, the minimal genus and then the maximal number of boundary components among all such surfaces. %The reason why we require the maximal number of boundary components is because we want to separate parallel closed curves.
\end{definition}

\begin{definition}
	A \emph{pared sutured manifold} $(M,P,R_\pm,\gamma)$ is a compact oriented manifold with two sets $P$ and $\gamma$ consisting of pairwise disjoint annuli and tori such that $\gamma \subset P \subset \partial M$. Furthermore, $(M,R_\pm,\gamma)$ is a sutured manifold and $(M,P)$ is a pared manifold.
\end{definition}

The following lemma is about the pared guts of library sutured manifolds.

\begin{lemma} \label{lem:gutsoflibrary}
	 Let $(M,R_\pm,\gamma)$ be a {library sutured manifold} of height $n$ with spine consists of 2-STs, 4-STs and $T\times I$'s. Furthermore, if we think of $\gamma$ as the pared locus, $M$ is a pared manifold. Let $\sL_i$ be the $i$-th layers and $G_i$ be the sutured guts of $\sL_i$ for $0\le i \le n-1$. Let $\Sigma_0, \ldots, \Sigma_n$ be the horizontal surfaces and $\psi_i$ be the ascending map for $\sL_i$ as in Definition \ref{def:mapofR+R-}. Assume that components of $R_-(G_i)$ (components of $R_+(G_i)$) are not isotopic to each other in $\Sigma_i$ ($\Sigma_{i+1}$) for $0\le i \le n-1$.
	 \begin{enumerate}
	 	\item $n=2$. Let $X$ be the union of negative Euler characteristic components of minimal essential subsurface in $\Sigma_1$ containing $R_+(G_0)$ and $R_-(G_1)$. Then the pared guts of $M$ is a pared sutured manifold $(N,P,R_\pm,\gamma)$ where $N$ is $X\times I$, $\gamma = \partial \times I$, $R_- = X\times \{0\}$ and the pared locus $P$ consists of $\gamma$, $ (R_+(G_0) \cap X)$ and $(R_-(G_1) \cap X)$.
	 	\item $n=3$. Suppose that $R_-$ of each component of $G_1$ intersects $R_+(G_0)$ and $R_+$ of each component of $G_1$ intersects $R_-(G_2)$. Let $Z$ be the minimal essential subsurface in $\Sigma_1$ such that $R_+(G_0) \cup R_-(G_1) \subset Z$ and $R_+(G_1)\cup R_-(G_2) \subset \psi_1(Z)$. Take $X$ as the union of negative Euler characteristic components of $Z$. Then the pared guts of $M$ is a pared sutured manifold $(N,P,R_\pm,\gamma)$ where $R_-$ is $X$, $R_+ =\psi_1(X)$, $\gamma$ is the union of product annuli connecting $\partial R_+$ and $\partial R_-$ and tori from $T\times I$ components of $G_1$, and $P$ is $\gamma \cup (R_+(G_0) \cap R_-)\cup (R_-(G_1) \cap R_+)$.
	 	\item $n=4$. Suppose that for each $i$, $G_i$ is a 4-ST and $G_i$ intersects $G_{i+1}$. Let $Z$ be the minimal essential subsurface in $\Sigma_2$ such that $R_+(G_1) \cup R_-(G_2) \subset Z$, $R_+(G_2)\cup R_-(G_3) \subset \psi_2(Z)$ and $R_+(G_0)\cup R_-(G_1) \subset \psi_1^{-1}(Z)$. Take $X$ as the union of negative Euler characteristic components of $Z$. The pared guts of $M$ is a pared sutured manifold $(N,P,R_\pm,\gamma)$ where $N$ is a product manifolds $X\times I$ glued by a thickened annulus on each side via $R_+(G_1)$ and $R_-(G_2)$ respectively and with annular cusps on each side coming from $R_+(G_0)$ and $R_-(G_3)$. 
	  \end{enumerate}

\end{lemma}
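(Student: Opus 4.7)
The plan is to realize $N$ as the pared characteristic piece of $(M,\gamma)$ surviving after cutting along characteristic annuli. Two structural facts drive the argument: each sutured guts component of the spine is a window-type piece (the $T\times I$ and 2-ST are $I$-bundles, the 4-ST is a solid torus with annular boundary), and each complement $\sL_i\setminus G_i$ is a product sutured manifold, hence an $I$-bundle. Consequently $M$ is assembled from window-type pieces glued along the horizontal surfaces $\Sigma_i$, and the pared guts appears at the locus where these pieces cannot be amalgamated into a single larger window piece.

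For $n=2$, I first construct the candidate $N$ as a product region $X\times I\subset M$ sitting across $\Sigma_1$: set $R_-(N)=X\times\{0\}$ on the $\Sigma_0$-side, $R_+(N)=X\times\{1\}$ on the $\Sigma_2$-side, let the vertical annuli $\gamma(N)=\partial X\times I$ extend through the products $\sL_0\setminus G_0$ and $\sL_1\setminus G_1$ to play the role of sutures, and augment $\gamma(N)$ to $P$ by adjoining the annuli $R_+(G_0)\cap X$ and $R_-(G_1)\cap X$ as pared cusps, since they escape into cusps of the solid-torus or $T\times I$ guts. I would then verify that $M\setminus N$ is a book of $I$-bundles: the components of $\Sigma_1\setminus X$ sweep through $\sL_i\setminus G_i$ to yield $I$-bundle pieces, and the guts components $G_0,G_1$ contribute their own window-type structures, all gluing compatibly along the characteristic annuli $\partial X\times I$.

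For $n=3$ and $n=4$, the construction follows the same template, with the ascending maps $\psi_i$ sweeping $X$ across intermediate layers. For $n=3$, $N$ is the region above $X\subset\Sigma_1$ and below $\psi_1(X)\subset\Sigma_2$, containing $G_1$ in its interior (with any $T\times I$ component of $G_1$ contributing a toral component of $\gamma(N)$); the defining property of $Z$ is exactly what is needed so that all solid-torus cusps of $G_0,G_2$ become annular components of $P$. For $n=4$ with all 4-STs, $X\subset\Sigma_2$ is swept down to $\psi_1^{-1}(X)$ and up to $\psi_2(X)$, and $N$ is $X\times I$ glued to thickened annuli on each side corresponding to the cores of $R_+(G_1)$ and $R_-(G_2)$, decorated with annular cusps from $G_0,G_3$ via the ascending maps.

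The main obstacle I anticipate is showing that no essential annulus in $N$ has been missed, i.e., that every essential annulus in $(N,\partial_0 N)$ is parallel into a characteristic annulus or into $P$. The minimality of $X$ together with the hypothesis that components of $R_\pm(G_i)$ are non-isotopic in $\Sigma_i$, $\Sigma_{i+1}$ should rule out parallel sutures that could be chained into an essential annulus, but verifying this in cases $n=3,4$ requires a layer-by-layer analysis using the fact that $\psi_i$ preserves genus and Euler characteristic. A secondary technical point in the $n=4$ case is checking that $Z\subset\Sigma_2$ can be arranged as a step surface so that $\psi_1^{-1}(Z)$ is well-defined; this should follow from the intersection hypotheses between consecutive 4-ST guts.
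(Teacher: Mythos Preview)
Your construction of $N$ and the decomposition of $M\setminus N$ into window-type pieces essentially matches the paper's approach: the paper cuts first along $R_+(G_0)\cup A(G_0)$ and $R_-(G_{n-1})\cup A(G_{n-1})$ (which are unions of essential annuli precisely because the spine pieces are solid tori and $T\times I$'s), then along product annuli over $\partial X$, arriving at the same $N$. One small inaccuracy: a 2-ST is not an $I$-bundle, but this does not matter---what is used is only that $R_\pm(G_i)\cup A(G_i)$ is a union of annuli.

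The genuine gap is in the acylindricity verification for $n=3,4$. Your proposed mechanism---minimality of $X$ plus non-isotopy of the $R_\pm(G_i)$ components, analyzed layer by layer via Euler characteristic preservation of $\psi_i$---handles product annuli in $N$ and suffices for $n=2$, but gives no leverage on a non-product essential annulus $A$ with both boundary circles on, say, $R_-(N)$. The paper's key move here is different: perform cut-and-paste surgery on the relevant horizontal surface along $A$. The resulting surface is taut and the region between it and the original horizontal surface is a 4-ST whose $R_+$ is the double of $A$. By the uniqueness of horizontally prime guts (\cite[Corollary 3.20]{AZ1}), this 4-ST must coincide with a component of $G_1$ (for $n=3$), so $\partial A$ is forced to be isotopic to the cores of $R_-$ of that component. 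Now the intersection hypothesis (that $R_-$ of every component of $G_1$ meets $R_+(G_0)$) contradicts $\partial A\subset\partial_0 N$. For $n=4$ the same idea is applied after first isotoping $A$ to meet $X$ minimally and picking a non-product subannulus. Without invoking this guts-uniqueness classification of non-product annuli, your layer-by-layer plan has no way to pin down where $\partial A$ sits.
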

\begin{proof} Let $X$ be as in the statements of the lemma.

 Because $G_0$ consists of solid tori and $T\times I$'s, the union $R_+(G_0)\cup A(G_0)$ of $R_+$ and sutured annuli of $G_0$ is a union of essential annuli and so is $R_-(G_{n-1})\cup A(G_{n-1})$. We first cut $M$ along $R_+(G_0)\cup A(G_0)$ and $R_-(G_{n-1})\cup A(G_{n-1})$ to have a new pared sutured manifold $(M',P',R'_\pm,\gamma')$ where $M' = \uplus_{i=1}^{n-1} \sL_i$ (if $i=2$, $M'$ is a product manifold $\Sigma_1 \times I$), $\gamma' =\gamma$, and $P' = \gamma \cup R_+(G_0) \cup R_-(G_{n-1})$. Then we cut along product annuli coming from $\partial X$ to get the pared sutured manifold $(N,P,R_\pm,\gamma)$ which is stated in Lemma \ref{lem:gutsoflibrary} and some product sutured manifolds.
 
 It suffices to show that $(N,P)$ is acylindrical. Let $A$ be an essential annulus with $\partial A \subset P$. If $A$ is a product annulus and by the minimality of $X$, $A$ intersects $ R_+(G_0) \cup R_-(G_{n-1})$, which is impossible. So we assume both of the boundary curves of $A$ lie on $R_-(N)$ or $R_+(N)$. Without loss of generality, we assume that both of them lie on $R_-(N)$. 
 
In Case (1) for $n=2$, $(N,R_\pm,\gamma)$ is a product sutured manifold. Because the components of $R_+(G_0)$ are not parallel in $\Sigma_1$, any essential annulus is a product annulus. 
 
In Case (2) for $n=3$, do a cut-and-paste surgery for $\Sigma_1$ along $A$ to have $S'$. Then $S'$ is a taut surface homologous to $\Sigma_1$ and the guts $G$ between $S'$ and $\Sigma_1$ is a 4-ST with the double of $A$ as $R_+$. Since the horizontally prime guts of $(N,R_\pm,\gamma)$ is $G_1$, by \cite[Corollary 3.20]{AZ1}, $G$ is a component of $G_1$ and $\partial A$ is the union of the cores of $R_-(G)$ up to isotopy. Because $R_-$ of each component of $G_1$ intersects $R_+(G_0)$, we cannot have $\partial A \subset P$, a contradiction.
 
In Case (3) for $n=4$, we isotope $A$ so that it intersects $X$ minimally. Then $X$ decomposes $A$ into pieces of annuli. Let $A_0$ be a component of $A\backslash X$ such that $A_0$ is not a product annulus. By the same argument in the previous paragraph, $\partial A_0$ is the union of the cores of $R_-(G_i)$ up to isotopy for $i=1$ or 2. Since $R_+(G_1)$ intersects $R_-(G_2)$, if $\partial A_0$ is the union of the cores of $R_-(G_2)$, $A_0$ cannot be connected by annuli in $\sL_1$. So we know that $A_0$ is in $\sL_1$. Since $R_+(G_0)$ intersects $R_-(G_1)$, we cannot have $\partial A \subset P$, a contradiction.

\end{proof}

\section{3-Cusped Hyperbolic Manifolds} \label{sec:volume:3-cusped}

 Let $M$ be 3-cusped orientable hyperbolic manifold of finite volume. By applying the following ``half lives, half dies" lemma, we know that $H_2(M, \partial M)$ is at least rank 3.
 
 \begin{lemma}[{\cite[Lemma 3.5]{hatcher2000notes}}] \label{lem:halflives}
 	If $M$ is a compact orientable 3-manifold then the image of the boundary
map $H_2(M, \partial M)\to H_1(\partial M)$ has rank equal to one half the rank of $H_1(\partial M)$.
 \end{lemma}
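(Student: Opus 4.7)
The plan is to work with rational coefficients throughout and reduce the statement to a single rank equality coming from Poincaré-Lefschetz duality. From the long exact sequence of the pair $(M,\partial M)$,
\[
\cdots \to H_2(M,\partial M;\Q) \xrightarrow{\partial} H_1(\partial M;\Q) \xrightarrow{i_*} H_1(M;\Q) \to \cdots,
\]
exactness gives
\[
\dim(\mathrm{image}\,\partial) \;=\; \dim H_1(\partial M;\Q) - \mathrm{rank}(i_*).
\]
So it suffices to prove $\mathrm{rank}(\partial) = \mathrm{rank}(i_*)$; doubling and substituting then yields the claim $2\,\mathrm{rank}(\partial) = \dim H_1(\partial M;\Q)$.

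Next I would invoke Poincaré-Lefschetz duality over $\Q$. For the compact oriented pair $(M,\partial M)$ one has $H_k(M,\partial M;\Q) \cong H^{3-k}(M;\Q)$, and for the closed oriented surface $\partial M$ one has $H_k(\partial M;\Q) \cong H^{2-k}(\partial M;\Q)$. Combining with the universal coefficient theorem produces identifications $H_1(M;\Q)^* \cong H_2(M,\partial M;\Q)$ and $H_1(\partial M;\Q)^* \cong H_1(\partial M;\Q)$. Under these identifications, the linear dual of $i_*\colon H_1(\partial M;\Q) \to H_1(M;\Q)$ is the connecting homomorphism $\partial\colon H_2(M,\partial M;\Q)\to H_1(\partial M;\Q)$, by naturality of the cap product and the compatibility of the two Poincaré duality pairings with the long exact sequence of the pair. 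Since a linear map and its transpose have the same rank, $\mathrm{rank}(i_*) = \mathrm{rank}(\partial)$, and the lemma follows.

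The main obstacle is precisely the naturality verification that the duality isomorphisms intertwine $i_*$ with $\partial$ (up to sign); everything else is formal. A more geometric alternative I would keep in reserve is to show directly that $\ker(i_*)$ is a Lagrangian subspace of $H_1(\partial M;\Q)$ under the intersection form on the closed surface $\partial M$. Isotropy is verified by noting that if $\alpha=\partial A$ and $\beta=\partial B$ for $2$-chains $A,B$ in $M$, then the algebraic intersection of $\alpha$ and $\beta$ on $\partial M$ equals that of $A$ with $\beta$ in $M$, which can be pushed to zero by a transverse perturbation into the interior. Half-dimensionality of $\ker(i_*)$ then follows from non-degeneracy of the intersection pairing on the closed oriented surface $\partial M$.
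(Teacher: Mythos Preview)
The paper does not supply its own proof of this lemma; it is quoted directly from Hatcher's notes and used as a black box. Your main argument via Poincar\'e--Lefschetz duality is correct and is essentially the standard proof one finds in Hatcher: the commuting square
\[
\begin{array}{ccc}
H_2(M,\partial M;\Q) & \xrightarrow{\ \partial\ } & H_1(\partial M;\Q) \\
\cong\big\uparrow\,\cap[M] & & \big\uparrow\cong\,\cap[\partial M] \\
H^1(M;\Q) & \xrightarrow{\ i^*\ } & H^1(\partial M;\Q)
\end{array}
\]
(coming from the cap-product identity $\partial(a\cap[M])=\pm\, i^*(a)\cap[\partial M]$) identifies $\partial$ with $i^*$, the transpose of $i_*$, so $\mathrm{rank}(\partial)=\mathrm{rank}(i_*)$ and the exact sequence gives the half-rank statement.

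One caution about your reserve argument: showing that $\ker(i_*)=\mathrm{image}(\partial)$ is \emph{isotropic} for the intersection form on $\partial M$ only yields the inequality $\dim\ker(i_*)\le\tfrac12\dim H_1(\partial M;\Q)$. Non-degeneracy of the form does not by itself upgrade an isotropic subspace to a Lagrangian one; for the reverse inequality you still need an input equivalent to the duality step (e.g.\ that $\mathrm{image}(i^*\colon H^1(M;\Q)\to H^1(\partial M;\Q))$ annihilates $\ker(i_*)$ and has complementary dimension). So as written the geometric sketch is not a self-contained substitute for your first proof.
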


 \begin{lemma} \label{lem:sequence}
 
	Let $M$ be a 3-cusped orientable hyperbolic manifold and $z$ is a vertex of the Thurston sphere of $M$. Suppose $\Gamma(z)$ consists of solid tori and $T\times I$'s. Then $\Gamma(z)$ is at least depth 2. 
	
	Furthermore, let $e$ be an open Thurston face whose closure contains $z$ and $f$ be an open Thurston face whose closure contains $e$. Then for any element $y$ in $e$ and $y'$ in $f$, we can take $u$ in the open segment $(z,y)$ and $v$ in the open segment $(u,y')$ such that $\Gamma(z)$ has a sequence of 2 sutured decomposition
\[
	\Gamma(z) \stackrel{S_1}{\overline \leadsto} \Gamma(u)  \stackrel{S_2}{\overline \leadsto} \Gamma(v).
	\]

where $S_1$ and $S_2$ are properly norm-minimizing surfaces in $M$. Furthermore, $[S_1]$ is in $(z,y)$ and $[S_2]$ is in $(u,y')$.

\end{lemma}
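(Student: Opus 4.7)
The plan is to build the two decomposition surfaces $S_1$ and $S_2$ by interpolating between facet surfaces associated to the chain of Thurston faces $\{z\}\subset\bar e\subset\bar f$.

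\medskip

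\emph{Dimensional setup.} Since $\partial M$ consists of three tori, Lemma \ref{lem:halflives} gives $\mathrm{rank}\,H_2(M,\partial M)\geq 3$, so the Thurston unit ball is a polytope of dimension at least $3$ and its boundary sphere has real dimension at least $2$. This is consistent with the hypothesized chain of faces, with $\dim e\geq 1$ and $\dim f\geq 2$.

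\medskip

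\emph{Construction of $S_1$.} Let $F_z$ be a libroid facet surface representing $m_0 z$ for some $m_0\geq 1$, so that $\Gamma(z)$ is the sutured guts of $M\spl F_z$; by hypothesis it is a book of $I$-bundles whose spine consists of $2$-STs, $4$-STs and $T\times I$'s. For $y\in e$, choose an integer class $u=\alpha z+\beta y$ with $\alpha\gg\beta>0$. Projectively $u$ lies close to $z$ on the open segment toward $y$, and since $e$ is open with $z\in\bar e$ and $y\in e$, the point $u$ lies in $e$. Invoking the cone-invariance of guts on open Thurston faces from \cite[Theorem 1.2]{AZ1} together with the interpolation philosophy there, one realizes a libroid facet surface for $u$ in the form $F_u=kF_z\sqcup S_1$, where $k\geq 1$ is an integer and $S_1$ is a properly embedded oriented essential surface sitting inside the spine of $\Gamma(z)$: meridional discs of $2$-STs, horizontal discs or annuli in $4$-STs, and horizontal tori or annuli in $T\times I$-components. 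Computing homology classes, $[S_1]=[F_u]-k[F_z]=(\alpha-k)z+\beta y$ lies in the open cone over $(z,y)$ provided $1\leq k<\alpha$, which is automatic when $\alpha$ is chosen large. Since $M\spl F_u$ equals $(M\spl F_z)\spl S_1$ up to $k-1$ parallel copies of $F_z$ (each contributing only product pieces), cutting $\Gamma(z)$ along $S_1$ and discarding products yields $\Gamma(u)$, producing the taut sutured decomposition $\Gamma(z)\stackrel{S_1}{\overline{\leadsto}}\Gamma(u)$.

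\medskip

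\emph{Construction of $S_2$.} Apply the same construction with $\Gamma(u)$ in place of $\Gamma(z)$ and $y'\in f$ in place of $y$. Choose an integer class $v=\alpha' u+\beta' y'$ with $\alpha'\gg\beta'>0$; since $f$ is open with $u\in\bar f$, $v$ lies in $f$ and projectively on $(u,y')$. One obtains a surface $S_2$ supported in the spine of $\Gamma(u)$ with $[S_2]=(\alpha'-k')u+\beta' y'$ in the open cone over $(u,y')$, and a taut decomposition $\Gamma(u)\stackrel{S_2}{\overline{\leadsto}}\Gamma(v)$.

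\medskip

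\emph{Nontriviality and depth.} Because $\{z\}$, $e$, $f$ are Thurston faces of strictly increasing dimension, the cone-invariance statement of \cite[Theorem 1.2]{AZ1} forces the associated guts to strictly shrink, $\Gamma(z)\supsetneq\Gamma(u)\supsetneq\Gamma(v)$ up to product pieces; equivalently, the surfaces $S_1$ and $S_2$ are essential and non-boundary-parallel. Hence both decompositions are nontrivial and $\Gamma(z)$ has depth at least $2$. \emph{The principal technical obstacle} is the realization step: producing $F_u$ (and $F_v$) in the form $kF_z\sqcup S_1$ with $S_1$ supported inside the spine of $\Gamma(z)$ and with the desired homology class on the segment. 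This will be done by exploiting the explicit $2$-ST, $4$-ST and $T\times I$ structure of the spine to assemble $S_1$ from meridional discs, horizontal tori, and horizontal annuli whose $H_2$-contribution provides exactly the needed increment $(\alpha-k)z+\beta y$, and then verifying that the resulting surface remains Thurston-norm-minimizing by cone-invariance.
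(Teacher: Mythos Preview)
Your proposal is a plan rather than a proof: you explicitly flag the ``principal technical obstacle'' (realizing $F_u=kF_z\sqcup S_1$ with $S_1$ supported in the spine) as something still to be done. The paper bypasses this entirely by invoking \cite[Theorem~1.4]{AZ1}, which directly produces, for a vertex $z$ and a class $y$ in an adjacent face, a properly norm-minimizing surface $S_1$ in $M$ with $[S_1]\in(z,y)$ along which $\Gamma(z)$ decomposes. The cone-invariance result you cite, \cite[Theorem~1.2]{AZ1}, concerns only open faces and gives no mechanism for producing a decomposition surface when passing from a boundary vertex into a face; nor does it imply the strict shrinking $\Gamma(z)\supsetneq\Gamma(u)\supsetneq\Gamma(v)$ that you assert for nontriviality. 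Separately, your local description of $S_1$ as ``meridional discs of $2$-STs'' etc.\ is incompatible with $S_1$ being a properly embedded surface in $M$: such a disc has its boundary on a suture of $\Gamma(z)$ (which lies on $F_z$), not on $\partial M$, so it does not define a class in $H_2(M,\partial M)$ at all.

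There is also a genuine step in the paper's argument that your sketch omits. After decomposing $\Gamma(z)$ along $S_1$, one obtains a priori only the guts $\Gamma(M,X)$ of some surface $X$ representing (a multiple of) $u$, together with product pieces. To conclude that this equals $\Gamma(u)$ one must check that $\Gamma(M,X)$ is already horizontally prime. The paper does this by observing that each component of $\Gamma(z)$ is a $2$-ST, $4$-ST or $T\times I$, so a taut decomposition either leaves a given component intact or turns it into a product sutured manifold; the surviving components are therefore again of these three types, and by Proposition~\ref{prop:gutcomponentinhyperbolic} they contain no further non-boundary-parallel taut surfaces. Without this observation the identification of the decomposed manifold with $\Gamma(u)$ is unjustified, and the chain $\Gamma(z)\overline{\leadsto}\Gamma(u)\overline{\leadsto}\Gamma(v)$ does not follow.
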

\begin{proof}
	By \cite[Theorem 1.4]{AZ1}, we can take an element $w$ in $(z,y)$ such that $\Gamma(z)$ can be decomposed along a properly norm-minimizing surface $S_1$. The resulting sutured manifold is actually a union of the sutured guts of a surface $X$ representing a multiple of $u$ in $(z,y)$ and some product sutured manifolds. Since each component of $\Gamma(z)$ either remains the same or is decomposed to product sutured manifolds, the sutured guts $\Gamma(M,X)$ should consist of some components of $\Gamma(z)$. Furthermore, by Proposition \ref{prop:gutcomponentinhyperbolic}, there are no other taut surfaces in $\Gamma(M,X)$ not parallel to the boundary. Hence $\Gamma(M,X)$ is actually $\Gamma(u)$ and we have:
	\[\Gamma(z) \stackrel{S_1}{\overline \leadsto} \Gamma(u).\]
	
	Similarly, we can take an element $v$ in $(u,y)$ such that $\Gamma(z)$ has a sequence of 2 sutured decomposition
	\[
	\Gamma(z) \stackrel{S_1}{\overline \leadsto} \Gamma(u)  \stackrel{S_2}{\overline \leadsto} \Gamma(v).
	\]
	
	\end{proof}

	\begin{definition}\label{def:vanish}
		Let $M$ be an orientable hyperbolic 3-manifold with 3 cusps and $z$ be a class in $H_2(M,\partial M)$. There is a natural boundary map $\partial : H_2(M,\partial M) \to H_1(\partial M)$ coming from the long exact sequence for $(M,\partial M)$. We say $z$ \emph{vanishes} on a boundary component $P$ of $M$ if $\partial z$ is 0 in the $H_1(P)$ summand of $H_1(\partial M)$.
	\end{definition}

From Lemma \ref{lem:sequence}, we can deduce the following lemma.

\begin{lemma} \label{lem:suturenontrivial}
	Let $M$ be an orientable hyperbolic 3-manifold of finite volume with 3 cusps and $z$ be a vertex of the Thurston sphere and also a libroid class. If there is no class in $H_2(M,\partial M)$ that vanishes on two boundary components, then there exists two sutured guts components $G_0$ and $G_1$ of $\Gamma(z)$ such that each component of $s(G_0)$ and $s(G_1)$ is homologically non-trivial and a component of $s(G_0)$ is not homologous to any multiple of a component of $s(G_1)$ in $H_1(M)$.
\end{lemma}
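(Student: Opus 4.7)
The plan is to invoke Lemma~\ref{lem:sequence}, which produces a sequence of two nontrivial taut sutured decompositions
\[
\Gamma(z) \stackrel{S_1}{\overline \leadsto} \Gamma(u) \stackrel{S_2}{\overline \leadsto} \Gamma(v),
\]
with the freedom to choose $y$ in one open Thurston face $e$ and $y'$ in another open face $f$ satisfying $z \in \overline{e} \subset \overline{f}$. I would take $G_0$ to be a connected component of $\Gamma(z)$ that is turned into a product sutured manifold by the first decomposition $S_1$, and $G_1$ to be a component of $\Gamma(z)$ that survives $S_1$ but is killed by (the descendant of) $S_2$. Both exist because each decomposition is nontrivial and $\Gamma(z)$ has depth at least two; moreover $G_0 \neq G_1$ by construction.

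For homological nontriviality of the sutures, I plan to use the vanishing hypothesis together with half-lives-half-dies. The annular sutures of any guts component of $\Gamma(z)$ have cores on $\partial M$ whose slopes are the primitive parts of $\partial z|_{P_i}$ on the relevant boundary torus $P_i$. If such a core $\alpha$ were nullhomologous in $M$, then $[\alpha] \in \ker(H_1(P_i) \to H_1(M))$, which by Lemma~\ref{lem:halflives} and the long exact sequence of $(M,\partial M)$ equals the image of $\partial : H_2(M,\partial M) \to H_1(\partial M)$ restricted to the $H_1(P_i)$ summand. Thus there is a class $w$ with $\partial w$ concentrated on $P_i$, and $w$ vanishes on the other two boundary components, contradicting the hypothesis. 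A torus component of $s(G_i)$ (possible when $G_i$ is a $T\times I$ whose toral suture is an entire boundary torus $P_j$) is handled by the same argument applied to an essential curve on $P_j$, since the hypothesis forbids all of $H_1(P_j)$ from dying in $H_1(M)$.

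The delicate step is the non-multiplicity statement. Since $z$ is a vertex of the Thurston sphere and $e \neq f$, the classes $[S_1] \in (z,y) \subset \overline{e}$ and $[S_2] \in (u,y') \subset \overline{f}$ span genuinely different Thurston directions, so their boundary images $\partial[S_1], \partial[S_2] \in H_1(\partial M;\Q)$ are not scalar multiples of each other modulo $\partial z$. Only a guts component whose sutures interact transversely with $\partial[S_i]$ on some $P_j$ can be killed by $S_i$, so by suitable choices of $e$ and $f$ the components $G_0, G_1$ can be arranged to have sutures lying on different sets of boundary tori. Combined with the relation $\iota(\alpha_1) + \iota(\alpha_2) + \iota(\alpha_3) = 0$ in $H_1(M)$ (where $\alpha_i$ is the primitive slope of $\partial z|_{P_i}$ and $\iota$ is the inclusion-induced map) and the non-vanishing of each $\iota(\alpha_i)$ established above, we select a suture of $G_0$ whose $H_1(M)$-class is not proportional to any suture of $G_1$.

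The main obstacle is carrying out this last step precisely: tracing exactly which guts components die under each of $S_1, S_2$ and verifying that, for a suitable choice of the auxiliary faces $e, f$ and the associated $y, y'$, we always obtain non-proportional images in $H_1(M)$. The case where $\partial z$ is supported on only two of the three boundary tori (forcing a $T\times I$ component of $\Gamma(z)$ whose toral suture is the third torus) requires separate handling; in this situation I expect to take one of $G_0$ or $G_1$ to be this $T\times I$, whose toral suture supplies essential curves in directions transverse to the annular sutures of the other component, so that non-multiplicity is automatic.
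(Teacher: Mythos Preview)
Your argument rests on the claim that the annular suture cores of every guts component lie on $\partial M$ with slope given by $\partial z$. This is false. A $2$-ST or $4$-ST component is a solid torus sitting in the interior of $M$: its boundary torus is assembled from pieces of the facet surface together with the product annuli along which the window was split off, so its sutures are curves in $\interior(M)$ with no a priori relation to any boundary torus $P_i$. Only for a $T\times I$ component is there a toral suture equal to some $P_j\subset\partial M$, allowing the two annular sutures on the opposite side to be pushed through the product onto $P_j$. Consequently your half-lives-half-dies step does not establish homological nontriviality of the sutures of solid-torus guts, and your non-multiplicity step---which relies on assigning each guts component to a specific boundary torus---collapses for the same reason.

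The paper argues instead by contradiction. If the conclusion fails, every suture class in $H_1(M)$ is either zero or a rational multiple of a single $c\in H_1(M)$; write $\Gamma(z)=\Gamma_1\cup\Gamma_2$ accordingly. Poincar\'e duality produces a class with nonzero pairing against $c$, and one then chooses the face $e$ in Lemma~\ref{lem:sequence} so that the first decomposing surface $S_1$ also pairs nontrivially with $c$. This forces $S_1$ to meet the sutures of every component of $\Gamma_2$ transversely and hence to turn all of $\Gamma_2$ into products, so $\Gamma(u)\subset\Gamma_1$. Every surviving component now has null-homologous sutures, and one checks that $S_2$ can then nontrivially decompose such a component only if it is a $T\times I$ (on a $2$-ST or $4$-ST with null-homologous sutures, $S_2$ is forced to meet the boundary in curves parallel to the sutures and hence acts trivially). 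Thus $\Gamma_1$ contains a $T\times I$, and only at this final step does one push its annular suture onto the adjacent component of $\partial M$ and invoke the long exact sequence exactly as you propose, obtaining a class vanishing on two boundary tori and the desired contradiction.
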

\begin{proof}
	Otherwise, we suppose there exists a nontrivial 1-homology class $c $ in $H_1(M)$ such that for each sutured guts component $G$ of $\Gamma(z)$, any component of $s(G)$ is either homologically trivial or homologous to a multiple of $c$. We denote $\Gamma_1$ and $\Gamma_2$ as the union of guts components of the first type and the second type respectively.
	
	By Poincare duality, there exists a class $y$ in $H_2(M,\partial M)$ such that the algebraic intersection number $\langle y,c\rangle \ne 0.$ Because of \cite[Theorem 1.3]{AZ1}, $z$ is trivial in $H_2(\Gamma(z),\partial \Gamma(z))$. So $z$ is not a multiple of $y$. 
	
	Let $e$ and $e'$ be two 1-codimensional open Thurston face whose closure contains $z$ such that they are not in the opposite direction. Then the subspace spanned by $e$ and $e'$ is $H_2(M,\partial M)$. Since $\langle y,c\rangle \ne 0$, there is an element $w$ in one of $e$ and $e'$, say $e$, such that $\langle w,c \rangle \ne 0$.

	Let $f$ be an open Thurston face whose closure contains $e$. By Lemma \ref{lem:sequence}, we can take $u$ in the open segment $(z,w)$ and $v$ in $f$ such that $\Gamma(z)$ has a sequence of 2 sutured decomposition
\[
	\Gamma(z) \stackrel{S_1}{\overline \leadsto} \Gamma(u)  \stackrel{S_2}{\overline \leadsto} \Gamma(v).
	\]

where $S_1$ and $S_2$ are properly norm-minimizing surfaces in $M$. Furthermore, $[S_1]$ is in $(z,w)$ and $[S_2]$ is in $f$. 

Since $\langle [S_1], c \rangle$ is a multiple of $\langle w,c\rangle$, $[S_1]$ is nontrivial in $H_2(G,\partial G)$ for each guts component $G$ of $\Gamma_2$. So by \cite[Section 4]{Ga1} or \cite[Theorem 3.5]{FK}, $S_1$ nontrivially decompose each guts component $G$ of $\Gamma_2$. Furthermore, since $S_1$ intersects $c$ nontrivially and each component of $\Gamma(z)$ is a 2-ST, a 4-ST or a $T\times I$, $S_1$ decomposes $\Gamma_2$ to a union of product sutured manifolds. So $\Gamma(u)$ is a non-empty subset of $\Gamma_1$. Since for each  guts component $G'$ of $\Gamma(u)=\Gamma_1$, each of its sutures is homologically trivial, $S_2$ should meet $\partial G'$ in curves parallel to the sutures. Hence if $G'$ is a 2-ST or 4-ST, $S_2 \cap G'$ is a trivial decomposition surface. Since $\Gamma(u)  \stackrel{S_2}{\overline \leadsto} \Gamma(v)$ is a nontrivial sutured decomposition, there is a $G' =T\times I$ in $\Gamma_1$. By the definition of $\Gamma_1$, its sutures are homologically trivial in $H_1(M)$. Let $\eta$ be a closed curve on $\partial M \cap \partial (T \times I)$ such that $\eta$ is isotopic to the sutures of $G'$. Then $[\eta]$ is 0 in $H_1(M)$. Let $b$ be $[\eta]$ in $H_1(\partial M)$. From the long exact sequence:
\[
\cdots \to H_2(M,\partial M) \stackrel{\partial}{\to} H_1(\partial M) \to H_1(M) \to \cdots,
\]
since $b$ is mapped to $0$ in $H_1(M)$, we know that there is a class $z'$ in $H_2(M,\partial M)$ such that $\partial z' = b$ and hence $z'$ vanishes on the two boundary components of $M$ that $\eta$ is disjoint with, which is a contradiction to the assumption.
\end{proof}

\section{Volume of Library Bundles} \label{sec:volume:volumeoflibrary}

Now we start estimating the volume of library bundles and proving Theorem \ref{thm:2TIor4ST}. 

For a facet surface $F(z)$ in $M$, we have a sutured bundle structure for $M$ with $S_i$ as horizontal surfaces. In this section, we drop $\Sigma_{i+1}$ if $\Sigma_i$ is parallel to $\Sigma_{i+1}$ and think of the union of the rest of $\Sigma_i$ as the facet surface $F(z)$. So $\Sigma_i$ is non-isotopic to each other.

Let $M$ and $z$ be as in Theorem \ref{thm:2TIor4ST}. We use the notation for library bundles in Section \ref{sec:Library Bundles} so that we have horizontal surfaces $\Sigma_{i}$ and layers $\sL_i(F(z))$.

\subsection{Only two sutured guts components in $\Gamma(z)$ with one of them as $T \times I$}

We first consider the case when $\Gamma(z)$ consists of two $T\times I$'s or 4-STs and at least one of them is a $T\times I$.

\begin{lemma} \label{lem:2TI4ST}
	Let $M$ be an orientable hyperbolic 3-manifold with 3 cusps and $z$ be a class in $H_2(M,\partial M)$. If one of the following holds:
	\begin{enumerate}
		\item $\Gamma(z)$ consists of two $T\times I$'s, or
		\item $z$ is a vertex element of the Thurston sphere with $\Gamma(z)$ consisting of a $T\times I$ and a 4-ST, and there is no class in $H_2(M,\partial M)$ that vanishes on two boundary components of $M$,
	\end{enumerate}
	then $\Vol(M) \ge 2V_8.$ 
\end{lemma}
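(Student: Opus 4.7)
The strategy in both cases is to exhibit an essential surface $S\subset M$ such that the pared guts $\mathrm{Guts}(M,S)$ has $|\chi(\partial_0\,\mathrm{Guts}(M,S))|\geq 4$, and then conclude $\mathrm{Vol}(M)\geq 2V_8$ from Theorem \ref{thm:AST}. Working in the library bundle structure determined by the facet surface $F(z)$ with horizontal surfaces $\Sigma_0,\Sigma_1$, I would first apply Lemma \ref{lem:intersects} (finitely many times, and in both cyclic directions as needed) to arrange that $G_0$ intersects $G_1$ in $\Sigma_1$. Cutting $M$ along $\Sigma_0$ then yields a pared sutured manifold (Lemma \ref{lem:pared}) whose pared guts is identified by Lemma \ref{lem:gutsoflibrary}(1) as $N\cong X\times I$, where $X$ is the union of negative Euler characteristic components of the minimal essential subsurface of $\Sigma_1$ containing $R_+(G_0)\cup R_-(G_1)$. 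Since $\partial_0 N$ consists of two copies of $X$ with annular sutures removed, $|\chi(\partial_0 N)|=2|\chi(X)|$, and Theorem \ref{thm:AST} gives $\mathrm{Vol}(M)\geq V_8\,|\chi(X)|$; it therefore suffices to show $|\chi(X)|\geq 2$.

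For case (2), the vertex assumption on $z$ and the non-vanishing hypothesis allow me to invoke Lemma \ref{lem:suturenontrivial}: every suture of $G_0$ and $G_1$ is homologically non-trivial in $H_1(M)$, and some suture of $G_0$ is not homologous to any multiple of a suture of $G_1$. Combined with the Lemma \ref{lem:gutsoflibrary} hypothesis that the two annular components of $R_-(G_1)$ (coming from the 4-ST $G_1$) are non-isotopic in $\Sigma_1$, this homological rigidity forces the core of $R_+(G_0)$ and the cores of $R_-(G_1)$ to have a total transverse intersection count of at least two in $\Sigma_1$, so that a regular-neighborhood computation gives $|\chi(X)|\geq 2$.

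For case (1), both $G_0,G_1$ are $T\times I$ guts, each with a single annular piece in $\Sigma_1$. If the cores of $R_+(G_0)$ and $R_-(G_1)$ meet in at least two points, then $|\chi(X)|\geq 2$ and we conclude directly. In the remaining single-intersection case, $X$ is a once-punctured torus with $|\chi(\partial_0 N)|=2$; here I would exploit the two torus cusps $T_0,T_1$ arising from the toral sutures of $G_0,G_1$, which are genuine boundary tori of $M$ and therefore appear in the parabolic locus of the adjacent $T^2\times I$ pieces of the pared decomposition. Incorporating these torus cusps into the relevant pared manifold allows application of Lemma \ref{lem:nonsepsurface} and Theorem \ref{thm:estimategeodbd}, producing an auxiliary essential surface $Y'$ for which the pared guts after decomposing along $Y'$ has $|\chi(\partial\,\mathrm{Guts}(\cdot\spl Y'))|\geq 4$, whence $\mathrm{Vol}(M)\geq 2V_8$ by a second application of Theorem \ref{thm:AST}.

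The main obstacle is ruling out the degenerate single-intersection configuration that gives $|\chi(X)|=1$. In case (2) this is eliminated by translating the homological non-triviality and non-multiplicity statements of Lemma \ref{lem:suturenontrivial} into a geometric lower bound on the total intersection number of the three cores in $\Sigma_1$, using the 4-ST's non-isotopic $R_-$-annuli in an essential way. In case (1), where Lemma \ref{lem:suturenontrivial} is unavailable, the key technical step is assembling the two torus cusps $T_0,T_1$ into an enlarged pared manifold satisfying the cusp hypotheses of Lemma \ref{lem:nonsepsurface} so that Theorem \ref{thm:estimategeodbd} can promote the Euler-characteristic bound from $2$ to $4$.
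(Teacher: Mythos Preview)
Your overall strategy---arrange $G_0$ to intersect $G_1$ via Lemma~\ref{lem:intersects}, cut along a horizontal surface, and bound the Euler characteristic of the pared guts---is in the right spirit, but the execution has a genuine gap in both cases, and the missing ingredient is precisely what drives the paper's proof.

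The core problem is that cutting along $\Sigma_0$ and invoking Lemma~\ref{lem:gutsoflibrary}(1) yields pared guts $X\times I$ with \emph{only annular} pared loci: the toral suture of any $T\times I$ component of $G_0$ or $G_1$ is separated off as a type~(1) $T^2\times I$ neighborhood in the characteristic decomposition and does not sit inside the guts. So in Case~(1) you cannot simply ``incorporate'' the torus cusps $T_0,T_1$ into $X\times I$ to meet the hypotheses of Lemma~\ref{lem:nonsepsurface}---they are walled off from $X\times I$ by characteristic annuli, and Theorem~\ref{thm:estimategeodbd} requires the surface $Y$ to live inside a single hyperbolic piece with geodesic boundary. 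The paper resolves this by performing a sequence of \emph{annular compressions} on $\Sigma_0$, first along a vertical annulus running from $\Sigma_0$ to the toral boundary $\partial G_0\cap\partial M$, then iterating; this produces a new essential surface $Y_n$ for which $M\spl Y_n$ has the torus cusp of $G_1$ realized as a curve drilled from the interior of $X_n\times I$. Only after this maneuver does the pared guts genuinely carry a torus cusp together with at least three annular cusps, enabling Theorem~\ref{thm:4cusps}.

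Your Case~(2) argument also does not go through: the claim that Lemma~\ref{lem:suturenontrivial} forces total intersection number at least two, and that this in turn gives $|\chi(X)|\geq 2$, is unjustified. The core $c_0$ of $R_+(G_0)$ can meet one component $l_1$ of $R_-(G_1)$ once and miss the other component $l_2$ entirely, with $l_2$ sitting in an annular piece of the minimal essential subsurface and hence discarded from $X$; the homological non-triviality and non-multiplicity of the sutures do not rule this out. Again the paper's annular-compression surface $Y_n$ provides the extra structure: for the $4$-ST case the pared guts of $M\spl Y_n$ is $X\times I$ with a thickened annulus \emph{glued} along $l_1,l_2$, forcing both curves to lie in $X$. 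If $\chi(X)=-1$ then $X$ is a once-punctured torus containing the disjoint non-parallel curves $l_1,l_2$, so one of them is parallel to $\partial X$ and hence homologically trivial in $M$---and it is this boundary-parallel conclusion, not an intersection count, that contradicts Lemma~\ref{lem:suturenontrivial}.
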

\begin{remark}
	The inequality is sharp for the Borromean rings complement in this case. See \cite[Example 7.3]{AZ1}.
\end{remark}

\begin{proof}

 We are going to find a nice essential surface that can give us a good estimation (cf. \cite[Theorem 3.4]{agol2010minimal}).

 We let a guts component $G_0$ be a $T\times I$ and another one $G_1$ be a 4-ST or a $T\times I$. By Lemma \ref{lem:intersects}, after exchanging $G_0$ with $G_1$ finitely many times, we can assume that $G_0$ intersects $G_1$ and still think of the new facet surface as $F(z)$. 
 
  Note that $\partial G_0 \cap \partial M$ is a torus. We do an annular compression for $\Sigma_0$ along a vertical annulus from $\Sigma_0$ to $\partial G_0 \cap \partial M$. Denote $Y_1$ as the new surface. Note that $Y_1$ is not a sutured decomposition surface. However $Y_1$ is essential and, by Lemma \ref{lem:pared}, $ M \spl Y_1$ is a pared manifold $(N_1,P_1)$. Then $M \spl Y_1$ is obtained from $\sL_1(F(z))$ by adding 2 pared annuli whose cores are denoted as $c_0$ and $e_1$ on $R_-$ and $R_+$ of $\sL_1(F(z))$ respectively. Here and in the following we use a closed curve to represent its regular neighborhood in a surface. See Figure \ref{fig:TI1}.
 \begin{figure}[hbt]
	\includegraphics[height = 1.5 in]{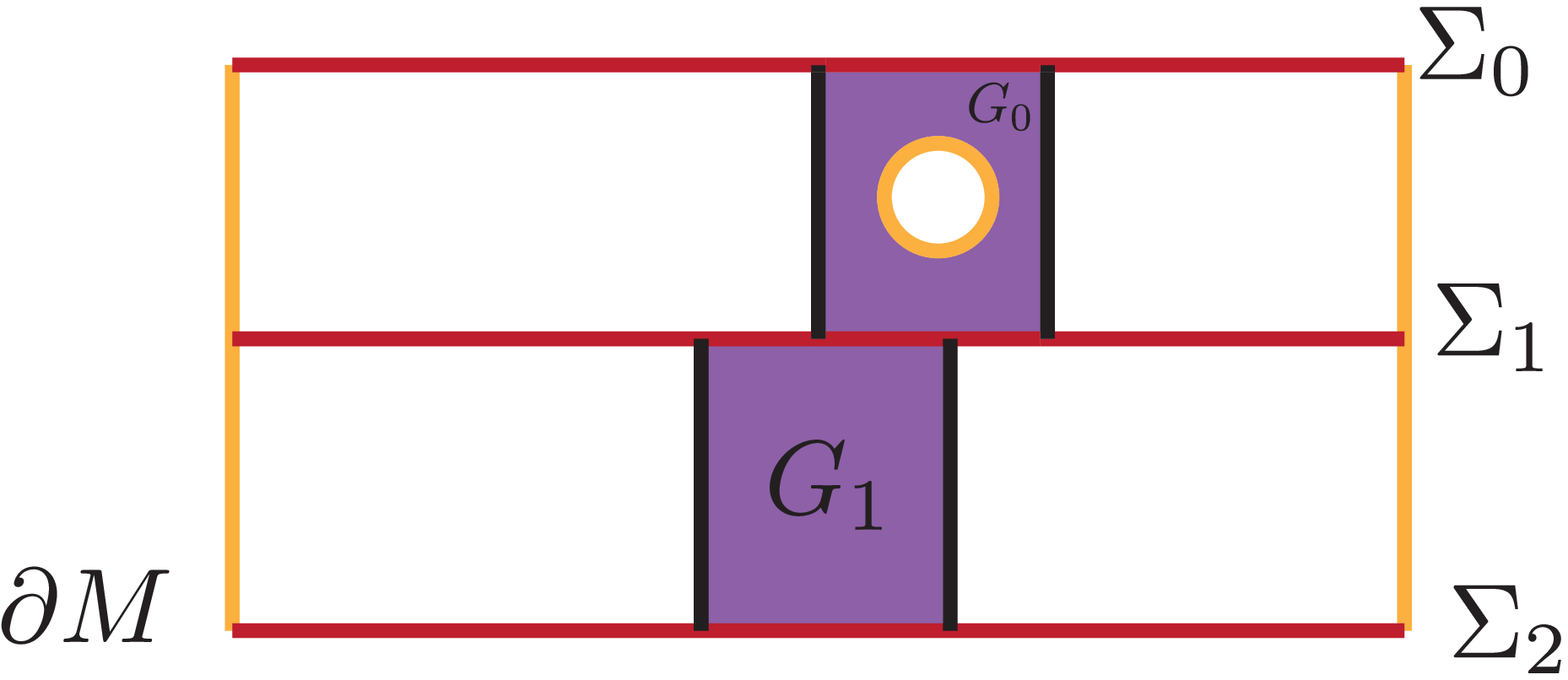}
	\includegraphics[height = 1.5 in]{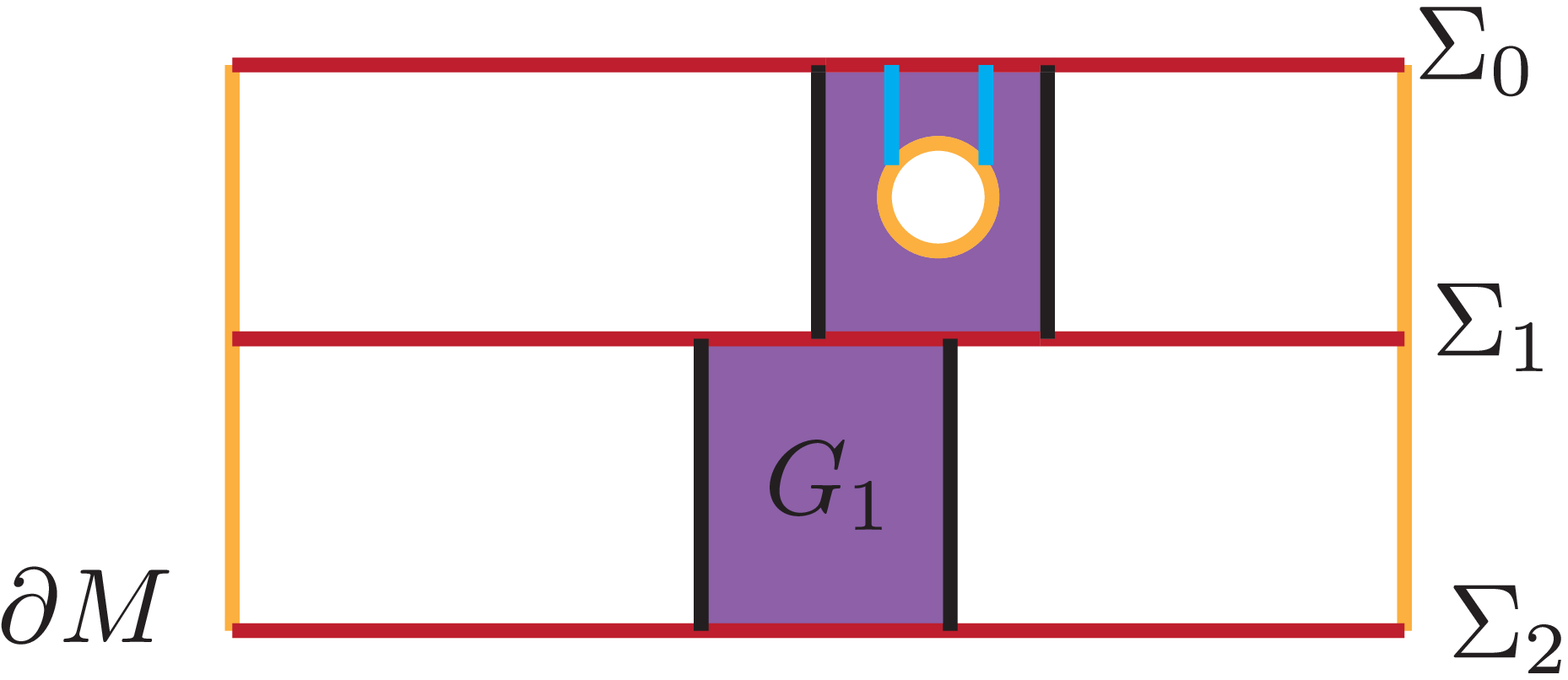}
	\includegraphics[height = 1.5 in]{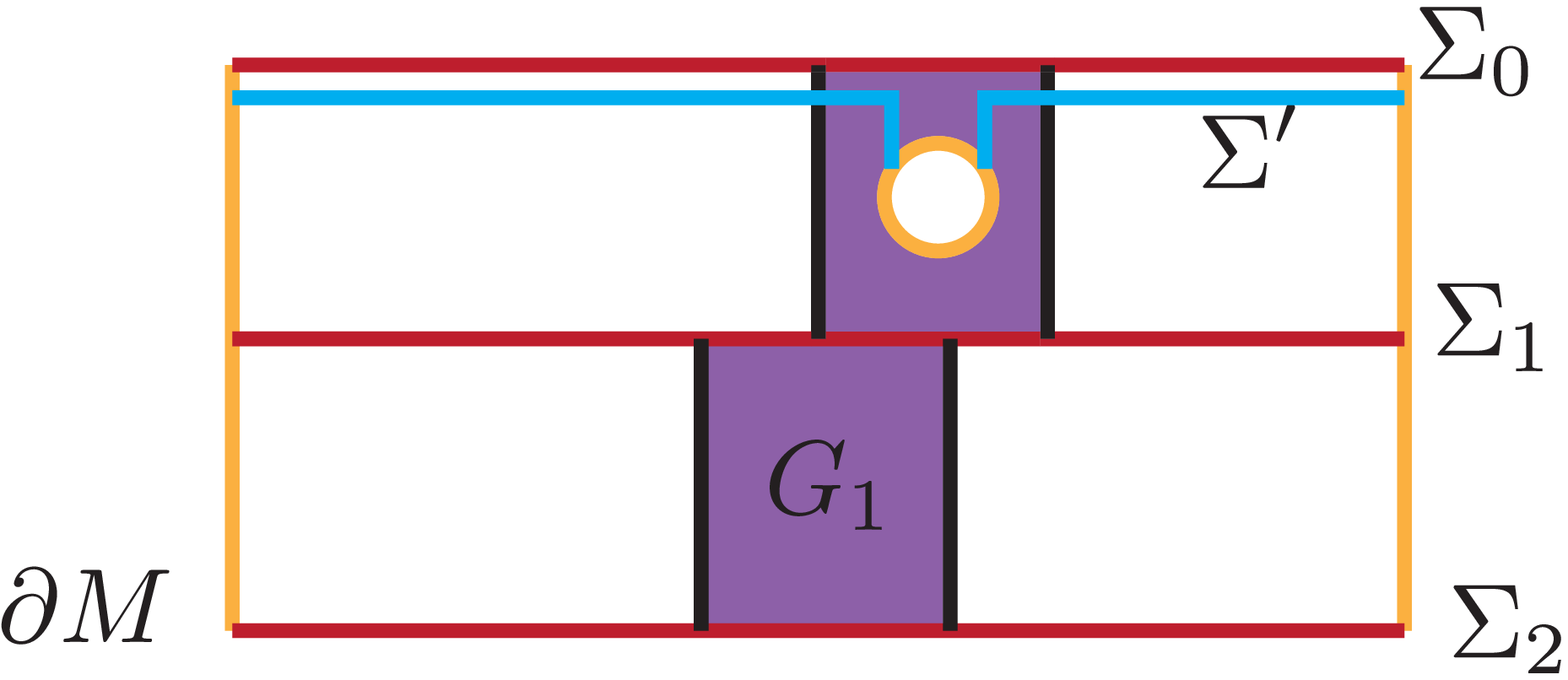}
	\includegraphics[height = 1.5 in]{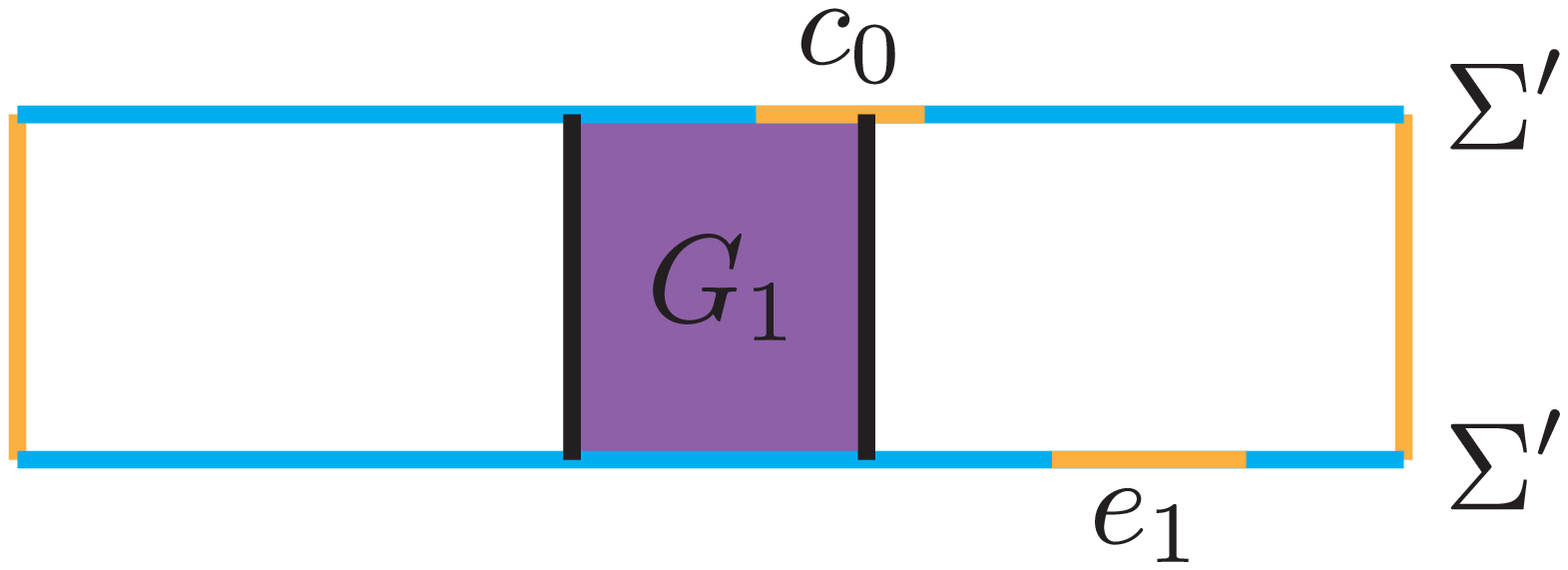}
\caption{An annular compression for $\Sigma_0$} \label{fig:TI1}
\end{figure}
 
%If $e_1$ is disjoint with $R_+(G_1)$ and the vertical annulus $A_1$ connecting $e_1$ and $Y_1$ is disjoint with $c_0$, we can do another annular compression for $Y_1$ along a $A_1$. Denote $Y_2$ as the new surface and then $M\spl Y_2$ comes from removing the pared annulus $e_1$ of $M\spl Y_1$, cutting along $A$ to have two pared annuli and adding a new pared annulus $e_2 $ on the same side of $e_1$. Then the underlying manifold is $X_2\times I$ drilled out a curve $\zeta_1$, where $X_2$ is a subsurface of $\Sigma_1$ by cutting $\Sigma_1$ along $e_1$.
%TODO (figure) 

Because $G_0$ intersects $G_1$, $c_0$ intersects $R_-(G_1)$. If $e_1$ is disjoint with $R_+(G_1)$, there is an annulus $A_1$ such that one of its boundary component is $e_1$ and the other boundary component lies on $Y_1$. If $\partial A_1 \cap Y_1$ is disjoint with $c_0$, we can do another annular compression for $Y_1$ along $A_1$ to have a new surface $Y_2$. Then the new pared manifold $(N_2,P_2) \stackrel{\Delta}{=} M\spl Y_2$ comes from removing the annular annulus $e_1$ of $N_1$, cutting along $A_1$ and adding a new pared annulus $e_2 $ on the same side of $e_1$. Here we think of the annuli from $A_1$ as pared annuli. Then the underlying manifold $N_2$ can be obtained from gluing $G_1$ and a product sutured manifold $W_2$ along some components of the sutured annuli. See Figure \ref{fig:TI2}.

\begin{figure}[hbt]
	\includegraphics[height = 1.5 in]{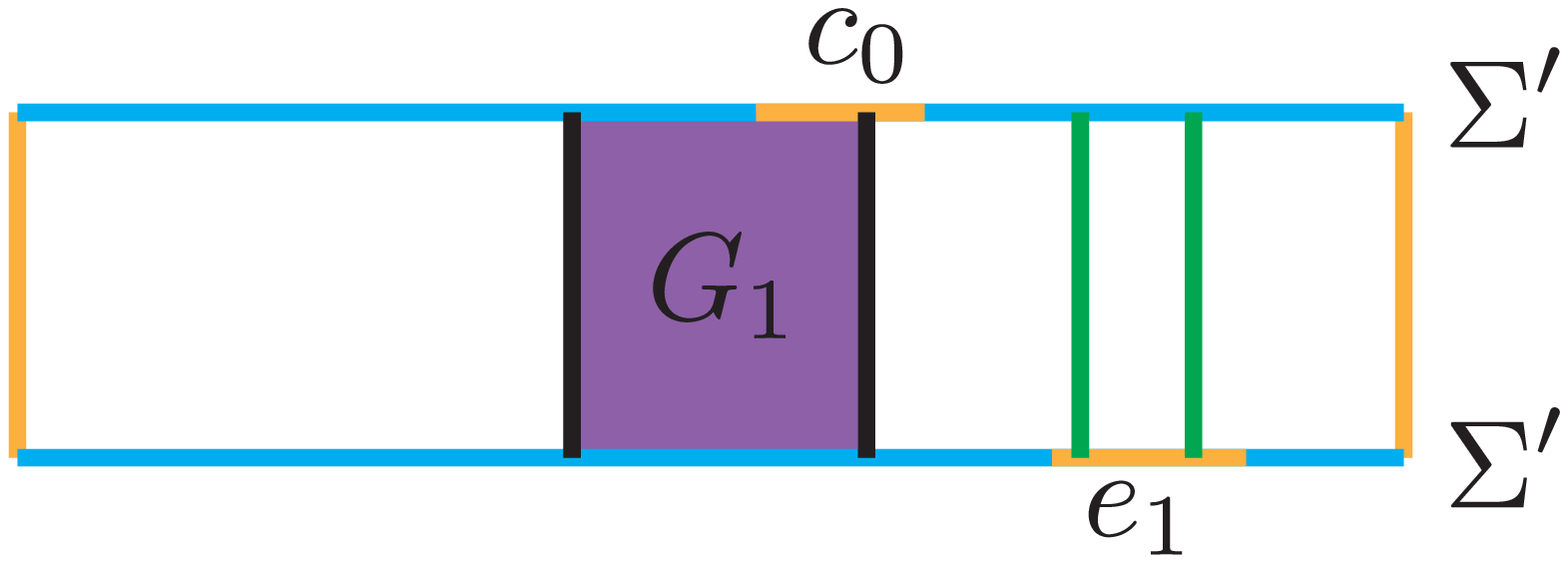} 
	\includegraphics[height = 1.5 in]{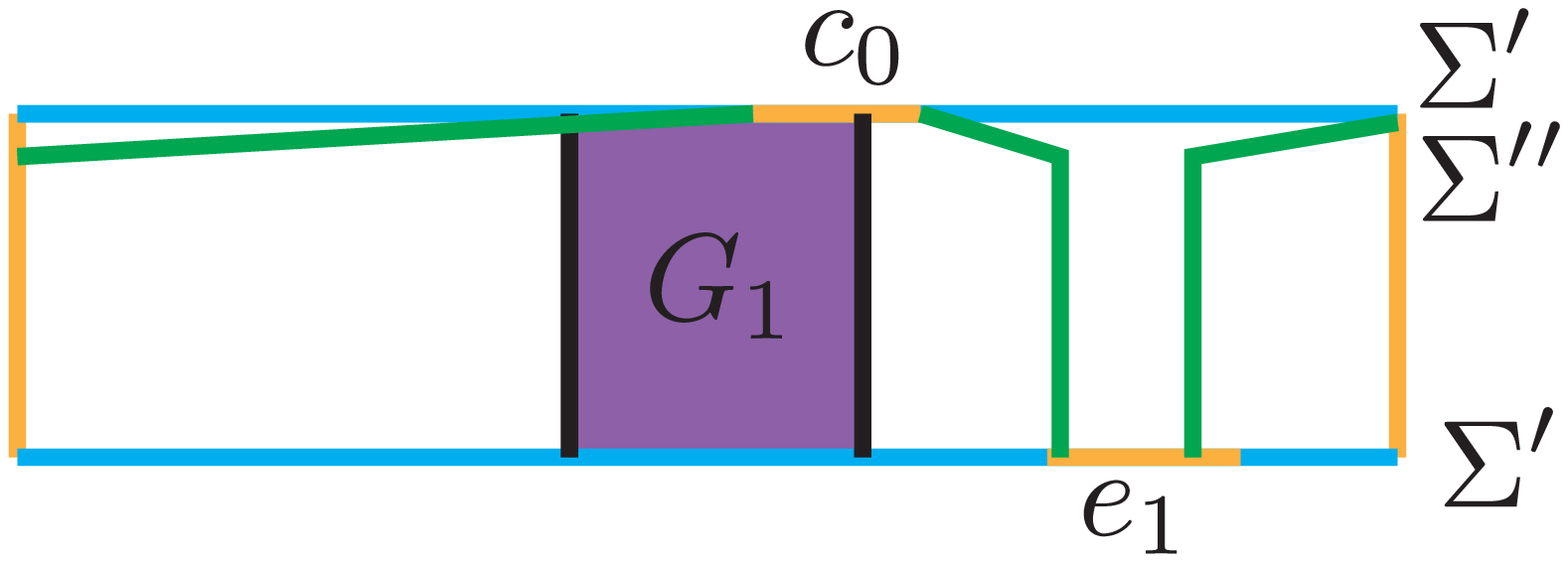}
	\includegraphics[height = 1.5 in]{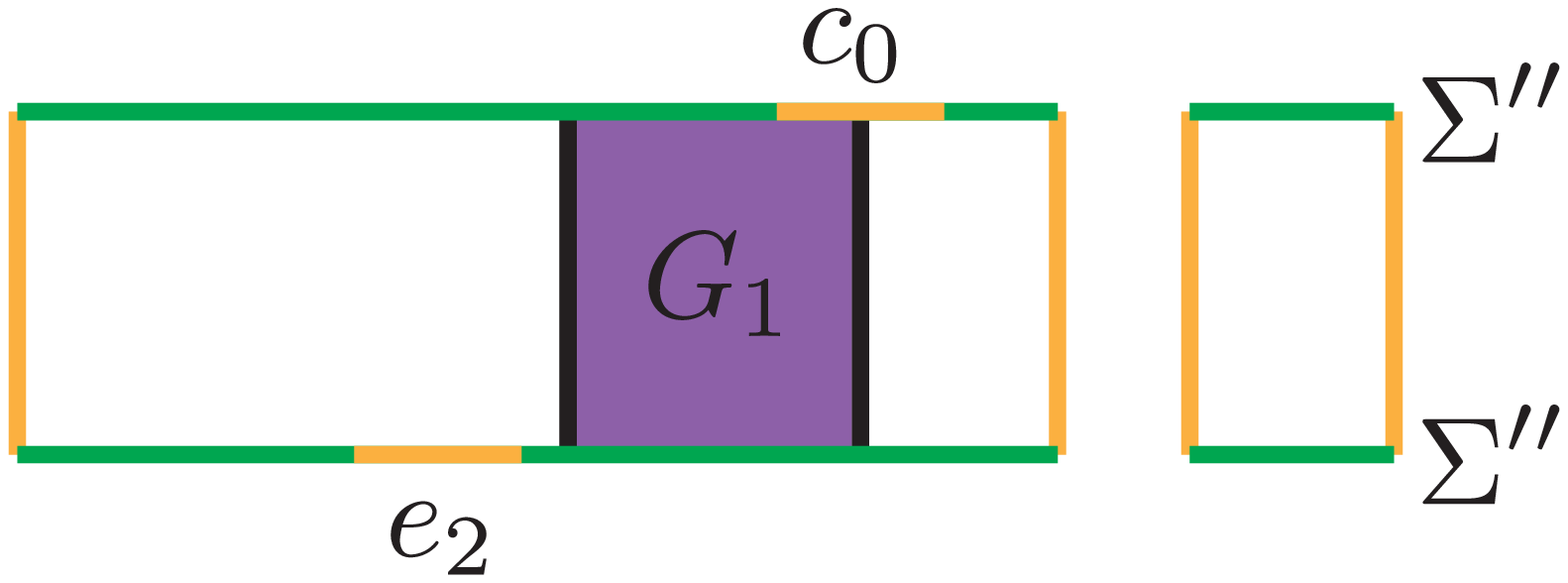} 
	\caption{Another annular compression for $\Sigma'$} \label{fig:TI2}
\end{figure}

Suppose we have $Y_i$, $(N_i,P_i)$ and $e_i$ such that $M\spl Y_i$ is a pared sutured manifold $(N_i,P_i, R_\pm,\gamma)$. $(N_i,R_\pm,\gamma)$ is obtained from gluing $G_1$ and a product sutured manifold $W_i$ along some components of the sutured annuli. The annuli components of the pared loci $P_i$ are $c_0$, $e_i$ and $\gamma$. Furthermore, they have Property (D): $e_i$ is disjoint with $R_+(G_1)$ and the vertical annulus $A_i$ which connects $e_i$ and $Y_i$ is disjoint with $c_0$. Then we do an annular compression for $Y_i$ along $A_i$ to have $Y_{i+1}$. The new pared manifold $(N_{i+1},P_{i+1}) \stackrel{\Delta}{=} M\spl Y_{i+1}$ comes from removing the annular cusp $e_i$ of $N_i$, cutting along $A_i$ and adding a new pared annulus $e_{i+1} $ on the same side of $e_i$. Here we think of the annuli from $A_i$ as pared annuli. $N_{i+1}$ can be obtained from gluing $G_i$ and a product sutured manifold $W_{i+1}$ along some components of the sutured annuli. 

If $(N_{i+1},P_{i+1})$ and $e_{i+1}$ have Property (D), we can repeat this operation. So we have a sequence of $Y_i$, $(N_i,P_i)$ and $e_i$. Since $\chi(Y_{i+1}) = \chi(Y_{i})$ and $Y_{i+1}$ has two more boundary components than $Y_{i}$, the number of annular compressions is finite. Thus at some stage we have $Y_n$, $(N_n,P_n)$ and $e_n$ such that either $e_n$ intersects $R_+(G_1)$ or the vertical annulus $A_n$ which connects $e_n$ and $Y_n$ intersects $c_0$.

If $G_1$ is a $T\times I$, $M\spl Y_n$ is obtained from a product manifold $X_n \times I$ by drilling out a curve $c_1 $ of $X_n\times \{1/2\}$ and adding pared annuli $c_0$ and $e_n$ on $X_n\times \{0\}$ and $X_n\times \{1\}$ respectively. Furthermore, $c_0$ intersects $c_1$ and $e_n$ intersects $c_0$ or $c_1$ under the projection to $X_n$. Let $X$ be the minimal essential subsurface containing $c_0$, $c_1$ and $e_n$. Then the pared guts $(G,P)$ of $M\spl \Sigma_n$ is an $I$-bundles $X \times I$ with two pared annuli $c_0$ and $e_n$ on each side and drilled out a curve $c_1$ in $X\times\{1/2\}$. Furthermore, $\partial X \times I$ are also pared annuli. Because $M$ is 3-cusped and there are only two sutured guts $T\times I$ in $\Gamma(z)$, $X_n$ has non-empty boundary and hence $X$ also has boundary. Therefore there are at least 3 annular cusps and 1 toral cusp in this gut. By Theorem \ref{thm:4cusps}, the volume of the pared guts $G$ is at least $2V_8$ and hence by Theorem \ref{thm:AST}, $\Vol(M) \ge 2V_8$.

If $G_1$ is a 4-ST, $M\spl Y_n$ is obtained from $X_n \times I$ by gluing a thickened annulus to $X_n \times \{1\}$ via two non-parallel closed curves $l_1,l_2$ and have 2 pared annuli whose cores are $c_0$ and $e_n$ on each side respectively. Furthermore, either $e_n$ intersects $l_1,l_2$, or the annulus connecting $e_n$ to $X_n \times \{0\}$ intersect $c_0$. Because $M$ is 3-cusped and there is only a sutured guts component $T\times I$ in $\Gamma(z)$, $Y_n$ has boundary and hence $X_n$ also has boundary. As in Lemma \ref{lem:gutsoflibrary}, by taking the pared guts of this pared manifold, we have a pared gut $(G,P)$ which is a pared sutured manifold coming from an $I$-bundles $X \times I$ glued by a thickened annulus via $l_1,l_2$. Note that $X$ contains $c_0,e_n,l_1,l_2$. Since $G$ is acylindrical, each component of $X$ has negative Euler characteristic. If $\chi(X) = -1$, $X$ is connected. Because there is a pair of curves $(c_0,l_i)$ in $X$ intersecting nontrivially, the genus of $X$ is at least 1 and hence $X$ is a punctured torus. Because $l_1,l_2$ are two disjoint non-parallel curves in $X$, we can assume $l_1$ intersects $c_0$ and then $l_2$ has to be parallel to the boundary of $X$. Hence $l_2$ is homologically trivial in $H_1(M)$. Because $l_2$ is isotopic to each component of $s(G_1)$, each component of $s(G_1)$ is homologically trivial in $H_1(M)$ which is a contradiction to Lemma \ref{lem:suturenontrivial}. So we have $\chi(X) \le -2$. Hence by Theorem \ref{thm:AST}, the volume of the guts is at least $2V_8$ too.  
\end{proof}

\subsection{More than two sutured guts components in $\Gamma(z)$}
 
 Next we consider the case when $\Gamma(z)$ consists of more than two solid tori and $T\times I$'s.

 \begin{lemma} \label{lem:atleast3}
	Let $M$ be an orientable hyperbolic 3-manifold with 3 cusps and $z$ be a vertex element of the Thurston sphere such that $\Gamma(z)$ consists of more than 2 solid tori and $T\times I$'s. Suppose there exists two 4-STs or $T\times I$'s $G$ and $G'$ in $\Gamma(z)$ and furthermore, one of the following holds:
	\begin{enumerate}
		\item each suture of $G$ and $G'$ is homologically nontrivial in in $H_1(M)$ and each suture of $G$ is not isotopic to any suture of $G'$, or
		\item $G$ and $G'$ are two $T\times I$'s, or
		\item $G$ is a $T\times I$ and $\Gamma(z)$ cannot be divided into two sets such that the sutures of all sutured guts components in each set are homologous up to multiplicity in $H_1(M)$.
	\end{enumerate}
	
	Then $\Vol(M) \ge 2V_8.$
\end{lemma}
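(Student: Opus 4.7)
The plan is to mirror the annular-compression strategy of Lemma \ref{lem:2TI4ST} and extend it to the multi-component setting, handling the three hypotheses (1), (2), (3) in parallel. In each case the goal is to exhibit an essential surface $Y \subset M$ so that the pared guts of $(M \spl Y,\partial M \spl \partial Y)$ either has boundary of Euler characteristic at most $-4$ (invoking Theorem \ref{thm:AST}) or carries enough annular cusps together with an essential non-peripheral surface to invoke Theorem \ref{thm:4cusps}.

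First I would set up the library bundle structure with respect to a facet surface $F(z)$ and use Lemma \ref{lem:intersects} repeatedly so that $G$ intersects the guts component next to it, and (when convenient) so does $G'$. Since $\Gamma(z)$ has at least three components, after these rearrangements at least three consecutive layers $\sL_{i-1},\sL_i,\sL_{i+1}$ contain non-trivial guts, placing us in the situation of Lemma \ref{lem:gutsoflibrary}(2) or (3). If either $G$ or $G'$ is a $T\times I$, I would then run the sequence of annular compressions from the proof of Lemma \ref{lem:2TI4ST}: compress the horizontal surface adjacent to the $T\times I$ along a vertical annulus to $\partial M \cap \partial G$ to create an annular cusp $c_0$, and then keep compressing until we reach $Y_n$ for which either $e_n$ meets a suture of a neighboring $4$-ST or the vertical annulus from $e_n$ meets $c_0$; this must terminate by the Euler characteristic / boundary-component count.

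Next I would identify the pared guts of $M \spl Y_n$ via Lemma \ref{lem:gutsoflibrary}: it is an $I$-bundle over a subsurface $X$ of a horizontal surface, possibly augmented by thickened annuli coming from $4$-ST contributions, with pared annuli at $c_0$, $e_n$, and along the sutures of each included $T\times I$. The crux is to bound $|\chi(X)|$ from below by $2$. In case (2), the presence of two $T\times I$ components contributes two additional torus (or annular) cusps to the pared guts, so Theorem \ref{thm:4cusps} applies directly and yields $\Vol(M)\ge 2V_8$. In case (1), the non-isotopy of the sutures of $G$ and $G'$ forces $X$ to contain two disjoint non-parallel essential curves whose homology classes in $H_1(M)$ are independent; the possibility $\chi(X)=-1$ (i.e., $X$ a once-punctured torus) then forces one of these curves to be peripheral, hence some suture of $G$ or $G'$ to be null-homologous in $H_1(M)$, contradicting hypothesis (1). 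Therefore $\chi(X)\le -2$ and Theorem \ref{thm:AST} gives $\Vol(M)\ge \tfrac{V_8}{2}\cdot |\chi(\partial \mathrm{Guts})|\ge 2V_8$. Case (3) is handled by reducing it to case (1): the non-splittability hypothesis guarantees that at least one pair of guts components in $\Gamma(z)$ has sutures that are neither isotopic nor both null-homologous, and this pair (together with $G$) plays the role of $(G,G')$ from case (1).

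The main obstacle will be case (3), where the hypothesis is purely combinatorial-homological. Here one must check carefully that the sequence of compressions does not terminate prematurely with a trivial pared guts, and that the essential subsurface $X$ really achieves $\chi(X)\le -2$. The argument combines the non-splittability assumption with the long exact sequence of the pair $(M,\partial M)$, in the spirit of Lemma \ref{lem:suturenontrivial}: if $\chi(X)=-1$ then $X$ would be a once-punctured torus or a thrice-punctured sphere, and tracking which sutures are forced to be peripheral in $X$ shows that $\Gamma(z)$ would admit a splitting of its guts into two sets with parallel-up-to-multiplicity sutures, contradicting hypothesis (3). Once this case-by-case Euler characteristic bound is in place, Theorem \ref{thm:AST} (or Theorem \ref{thm:4cusps} when sufficient cusps appear) finishes the proof.
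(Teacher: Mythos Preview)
Your proposal has a genuine structural gap: you try to lift the annular-compression argument of Lemma~\ref{lem:2TI4ST} wholesale, but that argument requires the distinguished component to be a $T\times I$ (the vertical annulus lands on $\partial G\cap\partial M$). Hypothesis~(1) allows both $G$ and $G'$ to be $4$-STs, so there may be no $T\times I$ at all to compress along, and your plan does not cover this.

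More importantly, you are not using the hypothesis ``more than two components'' in a structural way. The paper's proof does \emph{not} perform annular compressions here. Instead it fixes $G_0=G$, arranges $G_0$ to intersect $G_1$, then picks \emph{any third} component $G_i$ with $i>1$ and exchanges $G_i$ forward until it intersects some $G_j$. The case split is on $j$: if $j\ge 2$ one obtains two \emph{disjoint} intersecting pairs $\{G_0,G_1\}$ and $\{G_i,G_j\}$, each contributing $V_8$ via Lemma~\ref{lem:gutsoflibrary}(1); if $j=0$ one has a three-layer stack $G_i\to G_0\to G_1$ handled by Lemma~\ref{lem:gutsoflibrary}(2); and only the delicate case $j=1$ (where $G_i$ slides past $G_0$ and hits $G_1$) requires the $\chi(X)=-1$ analysis. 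Your sketch never isolates the $j\ge 2$ case, which is exactly what the extra component buys.

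Finally, your reduction of hypothesis~(3) to hypothesis~(1) does not work: non-splittability of $\Gamma(z)$ into two homology-parallel families may be witnessed by a $2$-ST, which is neither a $4$-ST nor a $T\times I$, so you cannot simply relabel a witnessing pair as $(G,G')$. The paper handles~(3) inside the $j=1$ case directly: when $\chi(X)=-1$ the subsurface is a once-punctured torus, forcing a component of $R_+(G_0)$ to be parallel to a component of $R_+(G_i)$, hence $s(G_i)$ homologous to $s(G_0)$; since $G_i$ was an \emph{arbitrary} third component, this produces the forbidden splitting $\{G_1\}\cup\{G_i:i\ne 1\}$.
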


\begin{proof}
Let $G_i$ be the guts of layer $\sL_i(F)$ and $G_0$ be the $G$ in Lemma \ref{lem:atleast3}. Furthermore, if $G$ and $G'$ are not both 4-STs we assume $G$ is a $T\times I$. By Lemma \ref{lem:intersects}, without loss of generality, we assume $G_0$ intersects $G_1$. We take another $G_i$ with $i>1$. % which will be chosen later such that one of $G_1$ and $G_i$ is $G'$ in Lemma \ref{lem:atleast3}.

We exchange $G_i$ with $G_{i+1},G_{i+2},\cdots$ until it intersects a $G_j$ for some $j$. We still call the new facet surface $F(z)$ and consider the new library bundle structure related to $F(z)$. We consider different cases as follows.
\begin{enumerate}

		\item If $j=0$, $M$ is a library bundle with the library diagram as 
	\begin{equation*}
		\xymatrix{
\cdots  & G_i \ar[r] &G_0 \ar[r] &   G_1  &   G_2  & \cdots  
\save "1,2"."1,4"*[F.]\frm{}
 \restore
 }
\end{equation*}

		we cut $M$ along a union  of horizontal surfaces $S'$ that separate $ G_i \cup G_0 \cup  G_1$ from other guts. By Lemma \ref{lem:gutsoflibrary}(2), for the component of $M \spl S'$ that contains $ G_0 \cup  G_1 \cup G_i$, its pared guts $(N,P)$ is obtained from gluing $G_0$ and a product sutured manifold $W_2$ along some components of the sutured annuli. Furthermore, the pared annular components of $P$ on $R_+$ and $R_-$ come from $G_1$ and $G_i$, respectively.
		\begin{itemize}
			\item If $G_0$ is a $T\times I$, $(N,P)$ is $X\times I$ drilled out an essential curve $c \times \{1/2\}$ with annular cusps on each of $X\times \{0\}$ and $X\times \{1\}$. If $X$ has boundary, then $G$ has at least three annular cusps and a toral cusp. Hence by Theorem \ref{thm:4cusps}, $\Vol(G) \ge 2V_8$. If $X$ is closed, by the atoroidality of $M$, the genus of $X$ is at least 2 and hence $\chi(X) \le -2$. By Theorem \ref{thm:AST}, we still have $\Vol(M) \ge 2 V_8$. 
			\item If $G_0$ is a 4-ST, the underlying manifold $N$ is obtained from an $X\times I$ glued by a thickened annulus $A$. If $\chi(X) = -1$, $X$ is a punctured torus and one component of $A$ is the boundary of $X$ which is homologically trivial in $H_1(M)$. However this means the sutures of $G_0$ are homologically trivial, violating the condition. Hence $\chi(X) \le -2$ and $\Vol(M) \ge 2 V_8$ by Theorem \ref{thm:AST}.
		\end{itemize}

		\item If $j \ge 2$, $M$ is a library bundle with the library diagram as 
	\begin{equation*}
		\xymatrix{
\cdots   &G_0 \ar[r] &   G_1  &  \cdots  & G_i \ar[r] &G_j & \cdots 
\save "1,2"."1,3"*[F.]\frm{}
 \restore
 \save "1,5"."1,6"*[F.]\frm{}
 \restore
 }
\end{equation*}

We cut $M$ along a union of horizontal surfaces $S'$ that separates $ G_0 \cup  G_1$, $ G_i \cup  G_j$ and other guts. Then by Lemma \ref{lem:gutsoflibrary}(1), for the component of $M \spl S'$ containing $ G_0 \cup  G_1$, its pared guts has negative Euler characteristic and hence by Theorem \ref{thm:AST}, have volume $\ge V_8$. So does the pared guts related to $ G_i \cup  G_j$. Hence $\Vol(M) \ge 2 V_8$.

		\item If $j=1$, this means $G_i$ is exchangeable with $G_0$. $M$ is a library bundle with the library diagram as
\begin{equation*}
		\xymatrix{
\cdots  & G_0 \ar[r] &   G_1   & \cdots  \\
& G_i\ar[ur] & &
\save "1,2"."2,3"*[F.]\frm{}
 \restore
 }
\end{equation*}
 We cut $M$ along a union of horizontal surfaces $S'$ that separate $ G_0 \cup  G_1 \cup G_i$ from other guts. By Lemma \ref{lem:gutsoflibrary}(1), for the component of $M \spl S'$ containing $ G_0 \cup  G_1 \cup G_i$, its pared guts $(N,P)$ is a pared sutured manifold $(N,P,R_\pm,\gamma)$ such that $N = X\times I$ with at least two pared annuli on $R_-$ coming from $R_+(G_i)$ and $R_+(G_0)$ which intersect some components of $R_-(G_1)$ nontrivially under projection to $X$. If $\chi(X) \le -2$, by Theorem \ref{thm:AST} $\Vol(M) \ge 2 V_8$. Hence we assume $\Vol(M) < 2 V_8$ and thus $\chi(X) = -1$. Since there are closed curves intersecting each other nontrivially in $X$, $X$ is a punctured torus. Since the components of $R_+(G_0)$ and $R_+(G_i)$ are disjoint, a component of $R_+(G_0)$ and a component of $R_+(G_i)$ are parallel in $X$. 
 
We consider different cases in the condition of Lemma \ref{lem:atleast3}:
 \begin{itemize}
 	\item  Each suture of $G$ and $G'$ is homologically nontrivial in $H_1(M)$ and each suture of $G$ is not isotopic to any suture of $G'$. We choose a $G_i$ such that one of $G_1$ and $G_i$ is $G'$ in Lemma \ref{lem:atleast3}. Since a component of $R_+(G_0)$ and a component of $R_+(G_i)$ are parallel in $X$, $G_i$ is not $G'$ and hence $G_1$ is $G'$. By Lemma \ref{lem:intersects} we can exchange $G_i$ with other sutured guts component in the opposite direction so that $G_i$ intersects $G_j'$. If $j'=1$, the estimation is as in case (1) and if $j' \ge 2$, the estimation is as in Case (2). So we suppose $j'=0$. Similarly in the previous discussion in case (3), we have pared guts as $X' \times I$. We assume $\chi(X') = -1$ and therefore a component of $s(G_1)$ and a component of $s(G_i)$ are parallel in $X'$. Hence a component of $s(G_0)$ and a component of $s(G_1)$ are isotopic which is a contradiction to the assumption. Therefore $\chi(X') \le -2$ and $\Vol(M) \ge 2 V_8$ by Theorem \ref{thm:AST}.
 	
 	\item $G$ and $G'$ are two $T\times I$'s. Then $G_0$ is a $T\times I$ and $R_+(G_0)$ is an annulus. Let $A$ be the annulus connecting $R_+(G_0)$ and a component of $R_+(G_i)$. We can do a cut-and-paste surgery for $\partial (G_0)\cup \partial (G_i)$ along $A$ to have a torus $T$ that separates $G_0\cup G_i$ from other guts. Because $M$ is atoroidal, $T$ bounds a region $N$ which is a solid torus or a $T\times I$. Since each component of $F(z)$ is non-separating, $N$ must contain $G_0\cup G_i$ which means $N$ is obtained from gluing $G_0\cup G_i$ via each a component of $s(G_0)$ and $s(G_i)$. We choose a $G_i$ such that one of $G_1$ and $G_i$ is $G'$ in Lemma \ref{lem:atleast3}. If $G_i$ is a $T\times I$ or 2-ST, $N$ is not a solid torus or a $T\times I$. So $G_i$ is a 4-ST. Hence $G_1$ is $G'$, which is a $T\times I$.

 	 As in the previous case, we can exchange $G_i$ with other sutured guts component in the opposite direction so that $G_i$ intersects $G_j'$. And we can assume $j'=0$. However, this means $\Vol(M) \ge 2 V_8$ or a component of $s(G_0)$ and a component of $s(G_1)$ are isotopic. If the latter holds, because any component of $s(G_i)$ is isotopic to a curve $C_i$ in a distinct boundary component of $M$, respectively for $i=0,1$, we know that $C_0$ is isotopic to $C_1$ which violates the hyperbolicity of $M$.

	\item $G$ is a $T\times I$ and one cannot divide $\Gamma(z)$ into two sets such that the sutures of all sutured guts components in each set are homologous up to multiplicity in $H_1(M)$. Since there is an annuli connecting $R_+(G_0)$ and $R_+(G_i)$, this means a component of $s(G_0)$ is homologous to a component of $s(G_i)$. Because we can choose arbitrary $G_i$ with $i>1$, so we know that we can divide $\Gamma(z)$ into two sets: $\{G_1\}$ and $\{G_i|i\ne 1\}$ which violates the condition.
 \end{itemize}

	\end{enumerate}

\end{proof}

From the previous discussion, we have a useful lemma:

\begin{lemma} \label{lem:nonhomologous}
	Let $M$ be an orientable hyperbolic 3-manifold of finite volume with 3 cusps such that each class in $H_2(M,\partial M)$ is libroid. Suppose $z$ is a vertex of the Thurston sphere. If $\Vol(M) < 2 V_8$, then there exists two sutured guts components $G_0$ and $G_1$ of $\Gamma(z)$ such that any components of $s(G_0)$ and $s(G_1)$ are homologically nontrivial and a component of $s(G_0)$ is not homologous to any multiple of a component of $s(G_1)$.
\end{lemma}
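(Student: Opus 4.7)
The plan is to reduce Lemma~\ref{lem:nonhomologous} to Lemma~\ref{lem:suturenontrivial} by showing, under the hypotheses of Lemma~\ref{lem:nonhomologous}, that no class in $H_2(M,\partial M)$ can vanish on two boundary components of $M$. Once this key claim is established, Lemma~\ref{lem:suturenontrivial} directly produces the pair $G_0, G_1$.

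To establish the key claim, the argument proceeds by contradiction: suppose some $z' \in H_2(M,\partial M)$ vanishes on two boundary components $P_2$ and $P_3$ of $M$. First, I would choose a norm-minimizing facet surface $F(z')$ representing $z'$ with $\partial F(z') \subset P_1$ and $F(z') \cap (P_2 \cup P_3) = \emptyset$. This is possible because each boundary torus $[P_i]$ represents zero in $H_2(M,\partial M)$ (it lies entirely in $\partial M$, so is nullhomologous relative to the boundary) and has zero Thurston norm, so any $P_2$ or $P_3$ component can be removed from an initial representative without changing either the class or the norm.

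Next, in the sutured manifold $(M \spl F(z'), R_\pm, \gamma)$ the tori $P_2$ and $P_3$ appear as toral sutures. Since product sutured manifolds carry only annular sutures (or none at all), the components of $M \spl F(z')$ containing $P_2$ and $P_3$ are non-product, and therefore survive into the sutured guts $\Gamma(z')$ even after any further horizontal decomposition. By the libroid hypothesis every component of $\Gamma(z')$ is a solid torus or a $T \times I$, and by Proposition~\ref{prop:gutcomponentinhyperbolic} only the $T \times I$ type carries a toral suture; since each such $T \times I$ has a unique toral suture, $P_2$ and $P_3$ must lie in two distinct $T \times I$ components of $\Gamma(z')$.

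Applying Lemma~\ref{lem:2TI4ST}(1) then yields $\Vol(M) \ge 2V_8$, contradicting the hypothesis $\Vol(M) < 2V_8$; this proves the key claim, and Lemma~\ref{lem:suturenontrivial} completes the argument. The main delicate point is this last application: Lemma~\ref{lem:2TI4ST}(1) is stated for $\Gamma(z)$ consisting of exactly two $T\times I$'s, but its proof manipulates only the two chosen $T \times I$ guts via annular compressions and the intersection provided by Lemma~\ref{lem:intersects}, never appealing to the absence of other guts components. It should therefore transfer to the present setting in which $\Gamma(z')$ may also contain $2$-STs, $4$-STs, or additional $T \times I$'s, and verifying this transfer carefully is the principal technical task in carrying out the proof.
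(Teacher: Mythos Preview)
Your overall strategy is exactly the paper's: reduce to Lemma~\ref{lem:suturenontrivial} by showing that if some class $y$ vanishes on two boundary components, then $\Gamma(y)$ contains two $T\times I$ components and hence $\Vol(M)\ge 2V_8$.

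The one point where you diverge is the final step. You propose to apply Lemma~\ref{lem:2TI4ST}(1) even when $\Gamma(y)$ has more than two components, arguing that its proof ``never appeals to the absence of other guts components.'' This is not quite true: the proof of Lemma~\ref{lem:2TI4ST} invokes Lemma~\ref{lem:intersects} to exchange $G_0$ through the bundle until it intersects the guts component \emph{next to it}, and with only two guts this next component is automatically $G_1$. When other guts are present, the component $G_0$ eventually intersects need not be the second $T\times I$, and the annular-compression argument that follows is tuned to that two-guts setting. So the transfer you describe as ``the principal technical task'' is genuinely nontrivial.

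The paper avoids this work entirely: when $\Gamma(y)$ has more than two components but contains two $T\times I$'s, it simply cites Lemma~\ref{lem:atleast3}(2), which was proved precisely for this situation. So rather than reopening the proof of Lemma~\ref{lem:2TI4ST}, you should split into the cases $|\Gamma(y)|=2$ (Lemma~\ref{lem:2TI4ST}(1)) and $|\Gamma(y)|>2$ (Lemma~\ref{lem:atleast3}(2)); both give $\Vol(M)\ge 2V_8$.
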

\begin{proof}

	If there is no class in $H_2(M,\partial M)$ that vanishes on two boundary components, from Lemma \ref{lem:suturenontrivial}, we know that the statement holds. So we assume there is a class $y$ in $H_2(M,\partial M)$ that vanishes on two boundary components. Then $\Gamma(y)$ contains two $T\times I$'s, and by Lemmas \ref{lem:2TI4ST} and \ref{lem:atleast3}, we know that $\Vol(M) \ge 2 V_8$, a contradiction.
	\end{proof}

\subsection{Two solid tori with 4 longitudinal sutures in $\Gamma(z)$}

The only case that we have not considered is when $\Gamma(z)$ consists of two 4-STs. For this case, we also have an estimation.

\begin{lemma} \label{lem:24ST}
Let $M$ be an orientable hyperbolic 3-manifold of finite volume with 3 cusps such that each class in $H_2(M,\partial M)$ is libroid. If $z$ is a vertex element of the Thurston sphere such that $\Gamma(z)$ consists of two 4-STs, then $\Vol(M) \ge 1.5V_8.$ 
\end{lemma}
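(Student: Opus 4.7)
The plan is to build a library bundle structure for $M$ by taking a facet surface $F$ representing $2z$. By Lemma \ref{lem:label}, each component of $M \setminus F$ contains exactly one sutured guts component, so $F$ has exactly two components, yielding two layers $\sL_0, \sL_1$ with $G_0 \subset \sL_0$ and $G_1 \subset \sL_1$, each a 4-ST. By Lemma \ref{lem:intersects}, after finitely many exchanges I may assume that $G_0$ intersects $G_1$, i.e.\ $R_+(G_0) \cap R_-(G_1) \ne \emptyset$ in $\Sigma_1$ after minimal isotopy.

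Next, cut $M$ along the horizontal surface $\Sigma_0$ to obtain a library sutured manifold $M'$ of height $2$ with shelf $\Sigma_1$. Since 4-ST layers are built so that the two cores of $R_\pm(G_i)$ are non-parallel in their ambient horizontal surfaces, Lemma \ref{lem:gutsoflibrary}(1) identifies the pared guts of $M'$ as $N = X \times I$ with annular pared locus, where $X$ is the union of negative-Euler-characteristic components of the minimal essential subsurface in $\Sigma_1$ containing $R_+(G_0) \cup R_-(G_1)$. Since pared annuli contribute zero to Euler characteristic, Theorem \ref{thm:AST} gives $\Vol(M) \ge \frac{V_8}{2}|\chi(\partial_0 N)| = V_8\,|\chi(X)|$. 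Hence it suffices to show $\chi(X) \le -2$.

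To establish this, let $\alpha_1, \alpha_2$ denote the cores of the two components of $R_+(G_0)$; these are disjoint and non-parallel in $\Sigma_1$ by the construction of 4-ST layers. The key topological fact is that in any connected orientable surface of Euler characteristic $\ge -1$ — a pair of pants or a once-punctured torus — two disjoint essential simple closed curves must be parallel. Therefore, if both $\alpha_i$ lie in $X$, either a single component of $X$ has $\chi \le -2$, or $\alpha_1$ and $\alpha_2$ lie in distinct components each of Euler characteristic at most $-1$, giving $\chi(X) \le -2$ in either subcase.

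The main obstacle is the degenerate case in which some $\alpha_i$, say $\alpha_1$, lies in an annular component of the minimal essential subsurface and is thereby excluded from $X$; this happens precisely when $\alpha_1$ is disjoint from the two cores $\beta_1, \beta_2$ of $R_-(G_1)$. In this situation I would extend the annulus in $R_+(G_0)$ with core $\alpha_1$ through the product region of $\sL_1$ as a product annulus terminating on $\Sigma_0$, then close it up with a longitudinal annulus inside the solid torus $G_0$, producing an embedded torus $T \subset M$. Atoroidality of $M$ forces $T$ to be boundary-parallel to a cusp torus or to bound a solid torus, either of which constrains the suture class of $G_0$ (parallel to $\alpha_1$ on $\partial G_0$) to be peripheral or homologically trivial. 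Under the contrapositive hypothesis $\Vol(M) < 1.5 V_8 < 2V_8$, Lemma \ref{lem:nonhomologous} forces the sutures of $G_0$ and $G_1$ to be homologically nontrivial and mutually non-isotopic, yielding the desired contradiction. Hence $\chi(X) \le -2$ in all cases, and $\Vol(M) \ge 2V_8 \ge 1.5 V_8$.
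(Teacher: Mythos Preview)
Your argument has a genuine gap in the ``degenerate case'' where one core $\alpha_1$ of $R_+(G_0)$ is disjoint from both cores $\beta_1,\beta_2$ of $R_-(G_1)$. The torus construction you propose does not work as written: after pushing $\alpha_1$ through the product part of $\sL_1$ to a curve $\alpha_1'\subset\Sigma_0$, there is no reason for $\alpha_1'$ to lie on $\partial G_0$, so you cannot ``close it up with a longitudinal annulus inside the solid torus $G_0$.'' The curve $\alpha_1'$ may land in the product region of $\sL_0$, or may intersect $R_-(G_0)$ nontrivially. Moreover, even if some torus $T$ could be built, your conclusion is too quick: $T$ bounding a solid torus does not force the suture class of $G_0$ to be homologically trivial, and $T$ being boundary-parallel only makes the suture peripheral, which is \emph{not} excluded by Lemma~\ref{lem:nonhomologous}. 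That lemma gives homological nontriviality, not non-peripherality, so no contradiction arises.

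The case $\chi(X)=-1$ that you are trying to exclude is in fact the entire difficulty. When $X$ is a once-punctured torus containing exactly one $\alpha_i$ and one $\beta_j$, with the other two curves sitting in annular components of the minimal essential subsurface, nothing in the ambient hypotheses rules this out directly. The paper handles precisely this situation by passing to the $2$-fold cover $\tilde M$ dual to $\Sigma_0$, setting up a four-step exchange procedure among the lifted guts $\tilde G_0,\dots,\tilde G_3$, showing it must terminate (via Lemma~\ref{lem:intersects}), and then performing a detailed case analysis on where it stops, invoking auxiliary results (Lemmas~\ref{lem:isotopicincovering}, \ref{lem:homologousincovering}, \ref{lem:minimalessentail}) to reach a contradiction or a volume bound. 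Note also that the paper's argument in some branches yields only $\Vol(\tilde M)\ge 3V_8$, hence $\Vol(M)\ge 1.5V_8$; your claimed bound $\Vol(M)\ge 2V_8$ is stronger than what the paper's method achieves, which is a further indication that your shortcut cannot be correct.
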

\begin{proof}
	
We use the notation for library bundles as in Section \ref{sec:Library Bundles}. Let $z$ be a vertex element of the Thurston sphere such that $\Gamma(z)$ consists of two 4-STs $G_0$ and $G_1$.

By Lemma \ref{lem:intersects}, without loss of generality, we can assume $G_0$ intersects $G_1$ on $\Sigma_1$. Then if we cut $M$ along $\Sigma_0$, $M \spl \Sigma_0$ will be an $X\times I$ glued by a thickened annulus on each side. By Lemma \ref{lem:gutsoflibrary}(1), the pared guts of $M \spl \Sigma_0$ is $Y\times I$ with a pared annulus on each of $Y\times \{0\}$ and  $Y\times \{1\}$ where $Y$ is the union of negative Euler characteristic components of the minimal essential subsurface containing $R_+(G_0)$ and $R_-(G_1)$ in $\Sigma_1$. If $\chi(Y) \le -2$, by Theorem \ref{thm:AST} we have $\Vol(M) \ge 2V_8$. Therefore we have the following lemma:

\begin{lemma} \label{lem:intersectonce}

If the minimal essential subsurface containing $R_+(G_0)$ and $R_-(G_1)$ in $\Sigma_1$ has Euler characteristic $\le -2$, then $\Vol(M) \ge 2V_8$. 
\end{lemma}

Note that since two components of $R_+(G_0)$ are not isotopic to each other, if two components of $R_+(G_0)$ intersect $R_-(G_1)$, $\chi(Y) \le -2$.

Let $\tilde M$ be the 2-fold covering of $M$ with respect to $\Sigma_0$. Let $\tilde \Sigma_0,\tilde \Sigma_1,\tilde \Sigma_2,\tilde \Sigma_3$ be the liftings of $\Sigma_0, \Sigma_1$ in sequence and $\tilde G_0,\cdots, \tilde G_3$ be the liftings of $G_0,G_1$. So $\tilde \Sigma_0,\tilde \Sigma_2$ are projected to $\Sigma_0$ and $\tilde \Sigma_1,\tilde \Sigma_3$ are projected to $\Sigma_1$. Let $\tilde F(z)$ be the union of liftings of $F(z)$. Then $\tilde F(z) = \bigcup \tilde \Sigma_i$ is a facet surface in $\tilde M$. $\tilde M \spl \tilde F(z)$ is a book of $I$-bundles, i.e., $\tilde M$ is a library bundle with $\tilde \Sigma_i$ as horizontal surfaces and the union of $\tilde G_i$ as spine. Furthermore, $\tilde G_{2i}$ intersects $\tilde G_{2i+1}$ for $i=0,1$ because $G_0$ intersects $G_1$. See Figure \ref{fig:guts4ST}.

\begin{figure}[hbt]
\includegraphics[height = 1.2 in]{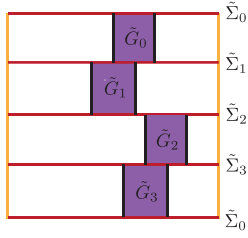}
\caption{The 2-fold covering of $M$ with respect to $\Sigma_0$} \label{fig:guts4ST}
\end{figure}

Before estimating the volume, we want to show some useful facts.

\begin{lemma} \label{lem:isotopicincovering}

	Let $\tilde A$ be an embedded annulus which comes from gluing a union of product annuli in $\tilde M\spl \tilde F(z)$. If a component $\alpha$ of $s(\tilde G_i)$ is connected by $\tilde A$ to a component $\beta$ of $s(\tilde G_{i+2})$ for an $i$, then each of $\alpha$ and $\beta$ can be isotoped in $\tilde F(z)$ to a boundary component of $\tilde F(z)$.	 
	 
\end{lemma}
\begin{proof}
Without loss of generality, suppose $\tilde A$ connects $s(\tilde G_2)$ and $s(\tilde G_0)$. We think of $\tilde A$ as an annulus as well as a union of product annuli. Let $p:\tilde M \to M$ be the covering map. Then $A \stackrel{\Delta}{=} p(\tilde A)$ has both boundary components in $s(G_0)$. %Since the self-intersection points of $A$ have degree 2, they are closed curves. 
We isotope $A$ so that the number of the components of the self-intersection is minimal and do a double curve sum for $A$ to have an embedded surface $A'$. Since the double curve sum of two product annuli is still a union of product annuli, $A'$ is a union of product annuli. Because the double curve sum does not change the Euler characteristic, $\chi(A')= 0 $ and $A' $ consist of annuli and tori. Since the boundary of $ A$ is a union of two components of $s(G_0)$, there is an annulus component $A_0$ of $A'$ such that $\partial A_0 = \partial A$ and $A_0$ is a union of product annuli.

Since $G_0$ is a 4-ST, we can glue $A_0$ with an annulus $A'$ in $\partial G_0$ to have an embedded torus $T_0$. Because the interior of $M$ is hyperbolic, $T_0$ either bounds a solid torus or a $T \times I$. Supposing the first case, a horizontal surface intersects the solid torus nontrivially and hence $A'$ is compressible by a disk, which is a contradiction. Therefore $T_0$ bounds a $T \times I$ and, by considering its intersection with $F(z)$, each boundary component of $A_0$ can be isotoped in $F(z)$ to a boundary component of $\tilde F(z)$. By lifting the isotopy to $\tilde M$, the lemma is proved.

\end{proof}

\begin{lemma}\label{lem:homologousincovering}
	A component of $s(\tilde G_i)$ is neither homologically trivial nor homologous to a multiple of a component of $s(\tilde G_{i\pm 1})$ in $H_1(\tilde M)$ for any $i$.
\end{lemma}
\begin{proof}
	First we suppose that a component of $s(\tilde G_i)$ is homologically trivial in $H_1(\tilde M)$. Then there is an immersed surface $\tilde H$ whose boundary is a component of $s(\tilde G_0)$. By taking the projection of $\tilde H$ to $M$, we have an immersed surface $H$ whose boundary is a component of $s(G_0)$, i.e., this component is homologically trivial, a contradiction to Lemma \ref{lem:nonhomologous}.
	
	Next we suppose that a component of $s(\tilde G_i)$ is homologous to a multiple of a component of $s(\tilde G_{i\pm 1})$ in $H_1(\tilde M)$. Then there is an immersed surface $\tilde H$ connecting $\tilde G_0$ and $\tilde G_1$ via a component of $s(\tilde G_0)$ and a multiple of a component of $s(\tilde G_1)$. By taking the projection of $\tilde H$ to $M$, we have an immersed surface $H$ connecting $G_0$ and $G_1$ which means a component of $s(G_0)$ is homologous to a multiple of a component of $s(G_1)$, a contradiction to Lemma \ref{lem:nonhomologous}.
	
\end{proof}

\begin{lemma} \label{lem:4-puncturedsphere}
	Let $X$ be a 4-punctured sphere and $\alpha, \beta$ be two closed curves. If $\alpha$ intersects $\beta$ nontrivially, each of $\alpha$ and $\beta$ separates two punctures from the others two.% and $\alpha\cup \beta$ divides $X$ into 4 pieces with a puncture.
	\end{lemma}
	\begin{proof}
	If a closed curve separates a puncture from the others, it is parallel to a boundary component of $X$. %Closed curves in a punctured sphere are determined up to isotopy by how they separate the punctures. 
	\end{proof}

Now we continue the proof of Lemma \ref{lem:24ST}. Assume $\Vol(M) < 2V_8$. We define a procedure, called Procedure (E), as follows: 
\begin{enumerate}
	\item exchange $\tilde G_3$ with $\tilde G_0$,
	\item exchange $\tilde G_2$ with $\tilde G_0$,
	\item exchange $\tilde G_3$ with $\tilde G_1$,
	\item exchange $\tilde G_2$ with $\tilde G_1$.
\end{enumerate}

We claim that Procedure (E) cannot repeat for infinitely many times. Otherwise we suppose this procedure can repeat for infinitely many times. We cut $\tilde M$ along $\tilde \Sigma_0 \cup \tilde \Sigma_2$ so that $\tilde M \spl (\tilde \Sigma_0 \cup \tilde \Sigma_2)$ has two components $N_0$ containing $\tilde G_0,\tilde G_1$ and $N_1$ containing $\tilde G_2,\tilde G_3$. Let $\Gamma_0$ be the sutured guts of $N_0$ and $\Gamma_1$ be the sutured guts of $N_1$. Then this procedure will obtain a new facet surface $\tilde F'(z)$ and actually exchange $\Gamma_0$ and $\Gamma_1$ by replacing $\tilde \Sigma_0 \cup \tilde \Sigma_2$ with the new $\tilde \Sigma_0 \cup \tilde \Sigma_2$ in the new $\tilde F'(z)$. So the assumption means that we can exchange $\Gamma_0$ and $\Gamma_1$ for infinitely many times which is a contradiction to Lemma \ref{lem:intersects}.

Therefore we can repeat Procedure (E) finitely many times until one step cannot be done. This is actually equivalent to exchanging $\Gamma_0$ and $\Gamma_1$ as many times as possible and then do first several steps in Procedure (E). 

We still call the new facet surface $\tilde F(z)$ and inherit the notations. 

Note that after exchanging guts, a product annulus in the new $\tilde M \spl \tilde F(z)$ is isotopic to a union of product annuli in the previous $\tilde M \spl \tilde F(z)$ before exchanging guts. Hence Lemma \ref{lem:isotopicincovering} still holds for the new facet surface $\tilde F(z)$. Note that by Lemmas \ref{lem:isotopicincovering} and \ref{lem:homologousincovering}, $s(\tilde G_i)$ and $s(\tilde G_j)$ cannot be connected by nonperipheral product annuli. Hence the assumption of Lemma \ref{lem:gutsoflibrary} holds for any case.

Since at some step we cannot further exchange guts components in Procedure (E), we have 4 possible cases: 
\begin{enumerate}
	\item It stops at the first step of Procedure (E), i.e., $\tilde G_3$ intersects $\tilde G_0$. $\tilde M$ is a library bundle with the library diagram as 
	\begin{equation*}
		\xymatrix{
\tilde G_2 \ar[r] & \tilde G_3 \ar[r] &\tilde G_0 \ar[r] &  \tilde G_1    }
	\end{equation*}

	We state a lemma which can help us to exclude some cases and its proof is at the end of this section.
	
	\begin{lemma} \label{lem:minimalessentail}
		Let $\tilde \Sigma_0$ be the horizontal surface from $\tilde G_3$ to $\tilde G_0$. If the minimal essential subsurface containing $R_+(\tilde G_3)$ and $R_-(\tilde G_0)$ in $\tilde \Sigma_0$ has Euler characteristic $\le -2$, the volume of $\tilde M$ is at least $3 V_8$.
	\end{lemma}
	
	 We cut $\tilde M$ along a horizontal surface $S'$ from $\tilde G_1$ to $\tilde G_2$ in this library bundle structure. From the library diagram we know that for each neighboring layers, their guts intersect each other non-trivially. By Lemma \ref{lem:gutsoflibrary}(3), the pared guts $(N,P)$ of $\tilde M \spl S'$ is a product manifolds $X\times I$ glued by a thickened annulus on each side via $R_-(\tilde G_0)$ and $R_+(\tilde G_3)$ respectively and with annular cusps on each side coming from $\tilde G_2$ and $\tilde G_1$. 
	
	Let $\alpha_1,\alpha_2$ be the components of $R_+(\tilde G_3)$ and $\beta_1,\beta_2$ be the components of $R_-(\tilde G_0)$. 
	
		Denote $Y$ as the union of negative Euler characteristic components of the minimal essential subsurface containing $R_+(\tilde G_3)$ and $R_-(\tilde G_0)$ in $X$.

	Suppose $\Vol(\tilde M) < 3 V_8 $.  By Lemma \ref{lem:minimalessentail}, $\chi(Y) \ge -1$. Because $R_+(\tilde G_3)$ intersects $R_-(\tilde G_0)$, $\chi(Y) =-1$ and $Y$ is a punctured torus. Without loss of generality, we assume that $\alpha_1$ intersects $\beta_1$. Then $\alpha_1$ and $\beta_1$ are in $Y$. 
	
	Because we glue a thickened annuli to $X$ via $\alpha_1$ and $\alpha_2$, each boundary component of $\alpha_1$ is parallel to a boundary component of $\alpha_2$ in the boundary of $N$. Since $\tilde G_2$ intersects $\tilde G_3$, at least a boundary component of $\alpha_2$ meet $R_+(\tilde G_2)$. So $\alpha_2$ is not contained in some components that are thrown away in Lemma \ref{lem:gutsoflibrary}(3). Since $\alpha_2$ is homologically nontrivial, non-parallel to and disjoint with $\alpha_1$, we know that $\alpha_2$ is disjoint with $Y$ and so is $\beta_2$.
	
	If $\chi(X) \le -3$, the volume of $\tilde M$ is at least $3V_8$ by Theorem \ref{thm:AST}. So we assume $\chi(X) \ge -2$. Because $\Gamma(z)$ does not contain $T\times I$'s, we know that $\Sigma_i$, and hence $X$, has non-empty boundary.

	We first consider when $X$ is connected. Since $\chi(X) \ge -2$, the genus of $X$ is 0 or 1. but $X$ contains $Y$ which has genus 1, so $X$ cannot have genus 0 and hence its genus is 1. Because $X$ has non-empty boundary, $X$ is a 2-punctured torus. Then $X\backslash Y$ is a pair of pants which contains $\alpha_2$ and $\beta_2$. However, by Lemma \ref{lem:homologousincovering}, $\alpha_2$ and $\beta_2$ are not homologous to multiples of each other and not homologically trivial which is impossible.

	Next we suppose that $X$ is disconnected. Then $X$ is a union of 2 surfaces $Y$ and $Y'$ each with Euler characteristic -1. Since $\beta_2$ is disjoint with $Y$, $\beta_2$ is in $Y'$.

In the construction of $(N,P)$, we glue a thickened annulus to each of $X\times \{0\}$ and $X\times \{1\}$ respectively. We have a sutured manifold whose $R_+$ part, denoted as $X'$, is obtained from $X$ by cutting along $\beta_1,\beta_2$ and gluing back two annuli $A_1,A_2$ where $A_1\cup A_2$ are actually $R_+(G_0)$. %TODO(figure)
	
	If $Y'$ is a punctured torus, since $\beta_2$ is not homologically trivial in $M$, $\beta_2$ is a non-separating curve in $Y'$. Then $X'$ is a 2-punctured torus and the cores of $A_1,A_2$ are two non-separating curves having the same slope. Hence if we let $A'$ be a component of $R_-(\tilde G_1)$ in $X'$ intersecting one of $A_1,A_2$, $A'$ must intersect both of them. Since $A_1,A_2$ and $A'$ lie in the guts $\Gamma_1$, by projecting them onto $M$, the projection of $X'$ is the union of negative Euler characteristic components of minimal essential subsurface containing $R_+(G_0)$ and $R_-(G_1)$. So by Lemma \ref{lem:intersectonce}, the volume of $M$ is at least $2V_8$.
	
	If $Y'$ is a 3-punctured sphere, $\beta_2$ is parallel to a boundary of $Y'$ and $X'$ is a 4-punctured sphere. Since $A_1$ or $A_2$ intersects a component of $R_-(\tilde G_1)$, by Lemma \ref{lem:4-puncturedsphere}, the minimal essential subsurface which contains $R_+(\tilde G_0)$ and $R_-(\tilde G_1)$ is the 4-punctured sphere $X'$. Similarly by projecting them onto $M$, the projection of $X'$ is the union of negative Euler characteristic components of minimal essential subsurface containing $R_+(G_0)$ and $R_-(G_1)$. So by Lemma \ref{lem:intersectonce}, the volume of $M$ is at least $2V_8$.

\item It stops at the second step of Procedure (E), i.e., $\tilde G_2$ intersects $\tilde G_0$. $\tilde M$ is a library bundle with the library diagram as 
	\begin{equation*}
		\xymatrix{
\tilde G_2 \ar[r]  \ar[dr] &\tilde G_0 \ar[r] &  \tilde G_1 \\
& \tilde G_3&  }
	\end{equation*} 

	 We cut $\tilde M$ along a horizontal surface $S'$ from $\tilde G_1$ to $\tilde G_2$. If $\tilde G_3$ is exchangeable with $\tilde G_1$, we move $\tilde G_3$ to the third layer. Otherwise, we keep it in the same place. 
	 Let $X$ be as in Lemma \ref{lem:gutsoflibrary}(2). Then the pared guts of $\tilde M \spl S'$ is a pared sutured manifold $(N,P, R_\pm,\gamma)$ whose $R_-$ is $X$. Let $\alpha_1,\alpha_2$ be the the cores of $R_-(\tilde G_0)$. Since the projections of $\alpha_1,\alpha_2$ onto $R_+(N)$ are parallel and $\tilde G_0$ intersects $\tilde G_1$, $X$ contains $\alpha_1,\alpha_2$. Since $\tilde G_2$ intersects $\tilde G_3$, $X$ contains at least a component of $R_-(\tilde G_3)$ and $R_+(\tilde G_2)$ respectively, denoted as $\beta$ and $\delta$, such that $\beta$ intersects $\delta$. By Lemma \ref{lem:homologousincovering}, $\alpha_1,\alpha_2, \beta$ are three disjoint curves in $X$ and they are not parallel to each other. Moreover, one of $\alpha_1,\alpha_2$, say $\alpha_1$, intersects a curve coming from $R_+(\tilde G_2)$. 
	
	We first consider when $X$ is connected. Because $\alpha_1$ and $\beta$ are homologically nontrivial and homologically nonparallel, the union of them is a pair of curves that decomposes $X$ into two pairs of pants. Then $\alpha_2$ is a boundary of $X$ and $\alpha_1,\alpha_2,\beta$ bound a pair of pants. Because $\tilde G_0$ is a 4-ST, $\alpha_1$ is homologous to $\alpha_2$ in $\tilde M$ so that $\beta$ is homologous to a multiple of $\alpha_1$ in $\tilde M$, which is a contradiction to Lemma \ref{lem:homologousincovering}.
	
	Suppose $X$ is disconnected. Then $X$ is a union of 2 surfaces $X_1$ and $X_2$ with Euler characteristic -1. Because both $\alpha_1$ and $\beta$ intersect a component of $R_-(\tilde G_2)$ respectively and they are homologically nontrivial and nonparallel, $X_1$ and $X_2$ are punctured tori and $\alpha_1$ and $\beta$ are in $X_1$ and $X_2$ respectively. Then $\alpha_2$ is the boundary of $X_1$ or $X_2$ which is a contradiction to Lemma \ref{lem:homologousincovering}.

%	So we have $\chi(X) \le -3$ and $\Vol(M) \ge 1.5V_8$ by Theorem \ref{thm:AST}.

\item It stops at the third step of Procedure (E), i.e., $\tilde G_3$ intersects $\tilde G_1$. $\tilde M$ is a library bundle with the library diagram as 
	\begin{equation*}
		\xymatrix{
\tilde G_2 \ar[dr] &\tilde G_0 \ar[r] &  \tilde G_1 \\
& \tilde G_3 \ar[ur] &  }
\end{equation*}

We cut $M$ along a horizontal surface $S'$ from $\tilde G_1$ from $\tilde G_2$. Then we have the same estimation as in Case (2) if we think of $\tilde G_0,\tilde G_1,\tilde G_2,\tilde G_3$ here as $\tilde G_3,\tilde G_2,\tilde G_1,\tilde G_0$ in Case (2) respectively.

\item It stops at the fourth step of Procedure (E), i.e., $\tilde G_2$ intersects $\tilde G_1$. $\tilde M$ is a library bundle with the library diagram as 
	\begin{equation*}
		\xymatrix{
 \tilde G_0 \ar[r] &  \tilde G_1 \\
\tilde G_2 \ar[r]\ar[ur] & \tilde G_3 
 }
\end{equation*}

We pick a union $F'$ of two $\tilde \Sigma_i$ which separate $\tilde G_0 \cup \tilde G_2$ and $\tilde G_1 \cup \tilde G_3$. Then $\tilde M$ is a library bundle with respect to $F'$ and its spine is $\tilde G_0 \cup \tilde G_2, \tilde G_1 \cup \tilde G_3$. In this new library bundle structure, we cut $M$ along a horizontal surface $S'$ from $\tilde G_1 \cup \tilde G_3$ from $\tilde G_0 \cup \tilde G_2$. By Lemma \ref{lem:gutsoflibrary}(1), the pared guts of $\tilde M \spl S'$ contains a product manifolds $X\times I$ with annular cusps on $X \times \{0\}$ from $R_+(\tilde G_0),R_+(\tilde G_2)$ and annular cusps on $X \times \{1\}$ coming from $R_-(\tilde G_1),R_-(\tilde G_3)$. 
	
	Because $G_0$ intersects $G_1$, we know that $\tilde G_3$ intersects $\tilde G_2$ and $\tilde G_0$ intersects $\tilde G_1$. Since we already exchange $\tilde G_3$ with $\tilde G_0$, $R_+(\tilde G_0)$ is disjoint with $R_-(\tilde G_3)$. So there is a curve $c_3$ in $X$ which intersects $c_2$ and disjoint with $c_0$ and $c_1$ which intersect each other, where $c_0,c_1,c_2,c_3$ is the core of a component of $R_+(\tilde G_0),R_-(\tilde G_1),R_+(\tilde G_2),R_-(\tilde G_3)$ respectively. Because of Lemma \ref{lem:isotopicincovering}, $c_1,c_3$ are not parallel in $X$ and neither are $c_0,c_2$. 
	
	Because $\Vol(M) < 1.5 V_8$, by Theorem \ref{thm:AST}, $\chi(X) \ge -2$.
	
	Because we exchange $\Gamma_0$ and $\Gamma_1$ as many times as possible and then do the first 3 steps in Procedure (E), we can actually reverse the last 3 operations and it does not affect the intersection pattern for $R_+(\tilde G_0)$ and $R_-(\tilde G_1)$. Let $Y$ be the minimal essential subsurface containing $c_0$ and $c_1$. Since $R_+(\tilde G_0)$ and $R_-(\tilde G_1)$ lie in the guts $\Gamma_0$, by projecting them onto $M$, the projection of $Y$ is a subsurface of the minimal essential subsurface containing $R_+(G_0)$ and $R_-(G_1)$.  
	
	 Because $\Vol(M) < 1.5 V_8$, by Lemma \ref{lem:intersectonce}, the Euler characteristic of $Y$ is $-1$ and hence $Y$ is a punctured torus. This means the algebraic intersection number $\langle c_0,c_1\rangle \ne 0$ and similarly $\langle c_2,c_3\rangle \ne 0$. Hence $c_0,c_1,c_2,c_3$ are homologically nontrivial in $H_1(X,\partial X)$.
	
	First we suppose $X$ is connected. Then $X$ is an at most 2-punctured torus. Because $c_1$ and $c_3$ are disjoint and homologically nontrivial in $H_1(X,\partial X)$, $X$ is a 2-punctured torus and they separate two punctures from each other. Hence $[c_1] = \pm [c_3]$ in $H_1(X,\partial X)$ and therefore $\langle c_0,c_3\rangle = \langle c_0,c_1\rangle \ne 0$ which is a contradiction to the fact that $c_0$ is disjoint with $c_3$.
	
	If $X$ is disconnected, because $c_0$ is disjoint with $c_3$, $X$ is a union of punctured tori $X_1$ and $X_2$ with $c_0$ and $c_1$ in $X_1$ and $c_2$ and $c_3$ in $X_2$. Because $\tilde G_2$ intersects $R_-(\tilde G_1)$, there is a component $c'_2$ of $R_+(\tilde G_2)$ intersecting a component $c'_1$ of $R_-(\tilde G_1)$ in either $X_1$ or $X_2$. However by Lemma \ref{lem:isotopicincovering} and Lemma \ref{lem:homologousincovering}, $c'_2$ is neither isotopic to $c_0$ nor homologically nontrivial. Hence $c'_2$ is not in $X_1$. Similarly $c'_1$ is not in $X_2$. So it is impossible for $c'_2$ to intersect $c'_1$.

\end{enumerate}

Therefore we complete the proof of Lemma \ref{lem:24ST}.

\end{proof}

\begin{proof}[Proof of Lemma \ref{lem:minimalessentail}]
	
By Lemma \ref{lem:intersects}, we exchange $\tilde G_2$ with $\tilde G_1,\tilde G_0,\tilde G_3,\ldots$ in the \emph{opposite} direction until it intersects some $\tilde G_i$. 
	\begin{itemize}
		\item If $i=1$, the library diagram is \xymatrix{
\tilde G_3 \ar[r] &\tilde G_0 \ar[r] &  \tilde G_1 \ar[r] & \tilde G_2  }

		We cut $\tilde M$ along a union of two horizontal surfaces $S'$ which separate $\tilde G_1 \cup \tilde G_2$ from $\tilde G_0 \cup \tilde G_3$. Since the minimal essential subsurface containing $R_+(\tilde G_3)$ and $R_-(\tilde G_0)$ in $\tilde \Sigma_0$ has Euler characteristic $\le -2$, by Lemma \ref{lem:gutsoflibrary}(1), the pared guts of related to $\tilde G_3$ and $\tilde G_0$ has at most Euler characteristic $-2$ and hence the volume $\ge 2V_8$. Moreover, the pared guts of $\tilde M \spl S'$ containing $\tilde G_1 \cup \tilde G_2$ has volume $\ge V_8$. So $\Vol(\tilde M) \ge 3V_8$. 
		\item If $i= 0$, the library diagram is 
\xymatrix{
\tilde G_3 \ar[r] &\tilde G_0 \ar[r]\ar[dr] &  \tilde G_1  \\
     &&  \tilde G_2  }

		We cut $\tilde M$ along a horizontal surface $S'$ from $\tilde G_1 \cup \tilde G_2$ to $\tilde G_3$. Then we have a library sutured manifold $\tilde M \spl S'$ with 3 layers containing $\tilde G_3$, $\tilde G_0$ and $\tilde G_1 \cup \tilde G_2$. By Lemma \ref{lem:gutsoflibrary}(2), the pared sutured guts $(N,P)$ of $\tilde M \spl S'$ is obtained from a product manifolds $X\times I$ by gluing a thickened annulus via two curves $\alpha_1,\alpha_2$ on $X\times \{1\}$ coming from $\tilde G_0$ and with annular cusps on each side. By the assumption of the lemma, $\chi(X) \le -2$. Let $X'$ be the $R_+$ side of $N$. Then $\chi(X') = \chi(X) \le -2$.
		
	In the following, we suppose $\Vol(M) < 1.5 V_8$, i.e., $\Vol(\tilde M) < 3 V_8$. By Theorem \ref{thm:AST}, we know that $\chi(X') = -2$.
	
	\begin{enumerate}
		
	\item We first consider when $X'$ is connected. %If $X'$ has genus at least 2, since $\Gamma(z)$ does not contain the boundary of $M$, $F(z)$ and hence $X$ has at least one boundary component. So $\chi(X') \le 2 - 2\times 2 -1 = -3$. By Theorem \ref{thm:AST}, we have $\Vol(\tilde M) \ge 3V_8$ which means $\Vol(M) \ge 1.5V_8$. Therefore we assume $X'$ is a surface of genus 0 or 1 and $\chi(X') = -2$. 

	If $X'$ is genus 1, then $X'$ has 2 boundary components, i.e., $X'$ is a 2-punctured torus. By Lemma \ref{lem:intersectonce}, we can assume the minimal essential subsurface containing $R_+(\tilde G_0)$ and $R_-(\tilde G_1)$ has Euler characteristic -1. Hence there is a component $\beta$ of $R_-(\tilde G_1)$ intersecting exactly a component $\alpha_1$ of $R_+(\tilde G_0)$ in $X'$ and the minimal essential subsurface containing $\beta$ and $\alpha_1$ is a punctured torus. Let $\alpha_2$ be the other component of $R_+(\tilde G_0)$ and $\delta$ be a component of $R_-(\tilde G_2)$ that intersects $R_+(\tilde G_0)$.

	Since $\alpha_2$ is disjoint with $\alpha_1,\beta$ and not homologically trivial, $\alpha_2$ is parallel to a boundary component of $X'$. So $\delta$ intersects $\alpha_1$. By the fact that $\delta$ is disjoint with $\beta$ and Lemma \ref{lem:homologousincovering}, $\beta$ and $\delta$ separate the two punctures in $X'$. Hence $\delta,\beta$ and $\alpha_2$ bound a pair of pants. By projecting them onto $M$, we have a component of $s(G_1)$ and 2 copies (without orientation) of a component of $s(G_0)$ whose union bounds an immersed surface. Then either a component of $s(G_0)$ is homologically trivial or $s(G_1)$ is homologous to a multiple of $s(G_0)$, which is a contradiction to Lemma \ref{lem:nonhomologous}.

	If $X'$ is genus 0, $X'$ has 4 boundary components, i.e., $X'$ is a 4-punctured sphere. Let $\alpha$ be a component of $R_+(\tilde G_0)$ that intersects a component $\beta$ of $R_-(\tilde G_1)$. By Lemma \ref{lem:4-puncturedsphere}, each of $\alpha_1$ and $\beta$ is a closed curve that separates 2 punctures from the others. Similarly, a component $\delta$ of $R_-(\tilde G_2)$ intersects $\alpha$ and hence $\delta$ also separates 2 punctures from the other. Since $\beta$ and $\delta$ are disjoint, they are parallel, which is a contradiction to Lemma \ref{lem:homologousincovering}.
		
	\item	If $X'$ is disconnected and $\chi(X') = -2$, then $X'$ is a union of 2 surfaces $X'_1$ and $X'_2$ with Euler characteristic -1. Because a component $\alpha_1$ of $R_+(\tilde G_0)$ intersects a component $\beta$ of $R_-(\tilde G_1)$, we let $X'_1$ be the punctured torus containing $\alpha_1$ and $\beta$. Let $\delta$ be a component of $R_-(\tilde G_2)$ that intersects $R_+(\tilde G_0)$. By Lemma \ref{lem:homologousincovering}, $\delta$ and $\beta$ are not homologous. So $\delta$ intersects another component $\alpha_2$ of $R_+(\tilde G_0)$ in $X'_2$ which is also a punctured torus. 
	
	We then repeat exchanging $\tilde G_3$ with $\tilde G_1 \cup \tilde G_2$, $\tilde G_0$ in the \emph{opposite} direction until $\tilde G_3$ intersects a $\tilde G_i$. Then we have one of the following library diagrams: 
		\begin{equation*}
		\xymatrix{
\tilde G_0 \ar[r]\ar[dr] &  \tilde G_1\ar[r] &  \tilde G_3 & \tilde G_0 \ar[r]\ar[dr] &  \tilde G_2\ar[r] &  \tilde G_3 & \tilde G_0 \ar[r]\ar[dr]\ar[ddr] &  \tilde G_1\\
     &  \tilde G_2&  & &  \tilde G_1& & &  \tilde G_2 \\
      & &&&&&&\tilde G_3 }
	\end{equation*}
	
	We cut along a horizontal surface $S'$ from $\tilde G_3$ to $\tilde G_0$ and take the pared guts of the complement.

	We first consider the first diagram. 
	
	If $\tilde G_2$ also intersects $\tilde G_3$, by Lemma \ref{lem:gutsoflibrary}(2), the pared sutured guts $(N,P)$ of $\tilde M \spl S'$ is obtained from a product manifold $X''\times I$ by gluing two thickened annuli on $X''\times \{1\}$ coming from $\tilde G_1$ and $\tilde G_2$ and with annular cusps on each side. Furthermore, $X''$ contains $X'$ and components of $R_-(\tilde G_1)$ and $R_-(\tilde G_2)$. 	
	
	If $\tilde G_2$ does not intersect $\tilde G_3$, we move $\tilde G_2$ to the third level in the library diagram as follows.
	
		\begin{equation*}
		\xymatrix{
\tilde G_0 \ar[r] \ar@{..>}[drr] &  \tilde G_1\ar[r] &  \tilde G_3 \\
     & & \tilde G_2 }
	\end{equation*}
	Then this library structure satisfies the condition of Lemma \ref{lem:gutsoflibrary}(2). Therefore, the pared sutured guts $(N,P)$ of $\tilde M \spl S'$ is obtained from a product manifolds $X''\times I$ by gluing a thickened annulus on $X''\times \{1\}$ coming from $\tilde G_1$ and with annular cusps on each side. Furthermore, $X''$ contains $X'$ and components of $R_-(\tilde G_1)$. 
	
	Hence in both cases, since the component of $R_-(\tilde G_1)$ other than $\beta$ is homologically nontrivial, it cannot be in $X'$. So $\chi(X'') \le \chi(X')-1 = -3$ and by Theorem \ref{thm:AST}, $\Vol(\tilde M)\ge 3V_8$. 
	
	Similarly, the same estimation works for the second diagram.
	
	As for the third diagram, by Lemma \ref{lem:gutsoflibrary}(1), the pared sutured guts $(N,P)$ of $\tilde M \spl S'$ is a product manifolds $X''\times I$ with annular cusps on each side. Furthermore, $X''$ contains $X'$, $\alpha_1$, $\alpha_2$, $\beta$ and $\delta$. Let $\zeta$ be a component of $R_-(\tilde G_3)$ that intersects $\alpha_1$ or $\alpha_2$. By Lemma \ref{lem:isotopicincovering} and \ref{lem:homologousincovering}, $\zeta$ is homologically trivial and not parallel to $\beta$ or $\delta$. Hence it cannot be in $X'$. So $\chi(X'') \le \chi(X')-1 = -3$ and again by Theorem \ref{thm:AST}, $\Vol(\tilde M)\ge 3V_8$.

	\end{enumerate}
	
	\item If $i= 3$, the library diagram is 
\xymatrix{
\tilde G_3 \ar[r]\ar[dr] &\tilde G_0 \ar[r] &  \tilde G_1  \\
     & \tilde G_2 &  }

	The proof of this case is almost the same as Case (2) in the proof of Lemma \ref{lem:24ST}. Only need two things need to be changed:
	\begin{enumerate}
		\item Switch the notation of $\tilde G_2$ and $\tilde G_3$ so $\beta$ is a component of $R_-(\tilde G_2)$ and $\delta$ is a component of $R_+(\tilde G_3)$
		\item When $X$ is connected, $\alpha_1,\alpha_2,\beta$ bound a pair of pants. Here we project them to $M$ so that we know either 3 copies (without orientation) of a component of $s(G_0)$ bounds an immersed surface in $M$. Therefore this component of $s(G_0)$ is homologically trivial in $H_1(M)$ which is a contradiction to Lemma \ref{lem:nonhomologous}.
	\end{enumerate} 
	
	\end{itemize}

	\end{proof}

Finally, we can prove Theorem \ref{thm:2TIor4ST}.

\begin{proof}[Proof of Theorem \ref{thm:2TIor4ST}]
If there is no class in $H_2(M,\partial M)$ that vanishes on two boundary components of $M$, then $\Gamma(z)$ contains at least one 4-ST. By Lemma \ref{lem:2TI4ST}(2), Lemma \ref{lem:24ST} and Lemma \ref{lem:atleast3}(1), we know that the theorem holds. If there is a class $y$ in $H_2(M,\partial M)$ that vanishes on two boundary components of $M$, the sutured guts $\Gamma(y)$ contains two $T\times I$'s, hence by Lemma \ref{lem:2TI4ST}(1) and Case (2) of Lemma \ref{lem:atleast3}, the volume of $M$ is at least $2V_8$.
\end{proof}

\section{Volume of Orientable Hyperbolic 3-Manifolds with 3 Cusps as Library Bundles} \label{sec:volume:general}

In this section we are going to prove Theorem \ref{thm:volumeofbook}. Before proving the theorem, we need some preparations.

\begin{lemma} \label{lem:nononseparating}
	Let $M$ be a manifold in Theorem \ref{thm:volumeofbook}. If there is a non-separating closed surface in $M$, then $\Vol(M)\ge 2V_8$.
\end{lemma}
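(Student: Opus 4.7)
The plan is to extract from the non-separating closed surface $S$ a class $z\in H_2(M,\partial M;\mathbb{Z})$ with $\partial z=0$, then use the libroid hypothesis to force three $T\times I$ guts components in $\Gamma(z)$, and finally apply the annular-compression construction of the proof of Lemma~\ref{lem:2TI4ST}(1) together with Theorem~\ref{thm:4cusps}.

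First, since $S$ is non-separating, $[S]\ne 0$ in $H_2(M;\mathbb{Z})$. I would argue that its image $z$ in $H_2(M,\partial M;\mathbb{Z})$ is also nonzero, ruling out the degenerate case $[S]\in\mathrm{im}(H_2(\partial M)\to H_2(M))$ by first replacing $S$ with an incompressible representative (using hyperbolicity of $M$, an incompressible closed surface in such an $M$ cannot be homologous to a $\mathbb{Z}$-combination of cusp tori). Then $\partial z=0$, so $z$ vanishes on all three cusp tori $T_1,T_2,T_3$ of $M$. A norm-minimizing representative $F(z)$ may be taken to be closed: any circle in $\partial F(z)\cap T_i$ is null-homologous in $T_i$, so these boundary circles can be capped off by annuli inside $T_i$ without changing the Thurston norm. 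Cutting $M$ along the closed surface $F(z)$ turns each cusp torus $T_i$ into a full toral suture of $M\spl F(z)$. By Proposition~\ref{prop:gutcomponentinhyperbolic} and the libroid hypothesis, $\Gamma(z)$ consists of 2-STs, 4-STs, and $T\times I$'s, of which only the $T\times I$ components can carry a toral suture (and each carries exactly one). Hence the three cusp tori produce three distinct $T\times I$ components $G_0,G_1,G_2$ of $\Gamma(z)$, one per cusp.

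With these three $T\times I$ guts in hand, I would follow the annular-compression argument of the proof of Lemma~\ref{lem:2TI4ST}(1). Using Lemma~\ref{lem:intersects} arrange $G_0$ and $G_1$ into adjacent layers so that they intersect, then perform annular compressions of a horizontal surface along vertical annuli to the toral suture of $G_0$. This yields a pared sutured manifold whose pared guts is of the form $X\times I$ drilled along a curve $c_1$ (the core of $G_1$), with pared annuli on either side and $\partial X\times I$ as further pared annuli. The third gut $G_2$ retains its toral suture throughout the construction, contributing an additional torus cusp to the final pared guts. Combined with the torus cusp coming from drilling $c_1$ and the annular cusps produced by the compressions, the pared guts has at least two torus cusps and at least two annular cusps, so by Theorem~\ref{thm:4cusps} its volume is at least $2V_8$, and Theorem~\ref{thm:AST} then gives $\Vol(M)\ge 2V_8$.

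The main obstacles are, first, rigorously ruling out the homologically degenerate case $z=0$ and, second, carefully verifying that the third $T\times I$ gut $G_2$ is carried intact into the pared guts produced by the annular-compression surgeries, rather than being absorbed into a product piece or characteristic $I$-bundle along the way. The second step is where the bulk of the technical work lies, since Lemma~\ref{lem:2TI4ST}(1) as stated assumes $\Gamma(z)$ consists of exactly two $T\times I$'s, and one must check that the extra gut is not lost under the sequence of compressions used there.
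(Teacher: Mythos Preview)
Your setup matches the paper exactly: produce a nonzero $z\in H_2(M,\partial M)$ with $\partial z=0$, take a facet surface $F(z)$ disjoint from $\partial M$, and observe that since only $T\times I$ components of $\Gamma(z)$ carry toral sutures, $\Gamma(z)$ must contain three $T\times I$'s, one for each cusp. Your first obstacle is not really an obstacle: a non-separating $S$ admits a dual loop $\gamma$ with algebraic intersection $\gamma\cdot S=1$, and this pairing factors through $H_2(M,\partial M)$, so $z\ne 0$.

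Where you diverge is the final step. The paper does not rerun the annular-compression argument; it simply invokes Lemma~\ref{lem:2TI4ST}. You are right that Lemma~\ref{lem:2TI4ST}(1) as stated asks for $\Gamma(z)$ to consist of \emph{exactly} two $T\times I$'s, so the citation is a bit loose; the intended reading (compare the proofs of Lemma~\ref{lem:nonhomologous} and Theorem~\ref{thm:2TIor4ST}) is to pair it with Lemma~\ref{lem:atleast3}(2), which handles more than two components when two of them are $T\times I$'s. Since Lemma~\ref{lem:atleast3} is stated for vertex classes, one first passes from $z$ to a vertex $v$ of a cone containing $z$: by Lemma~\ref{lem:sequence} and the libroid hypothesis every component of $\Gamma(v)$ is a solid torus or $T\times I$ and $\Gamma(v)$ decomposes down to $\Gamma(z)$, so the three $T\times I$'s persist in $\Gamma(v)$.

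Your proposed alternative of tracking $G_2$ through the compressions is in principle workable but harder than you indicate. The issue is not only that $G_2$ might be absorbed, but that Lemma~\ref{lem:intersects} only arranges $G_0$ to intersect whatever gut is \emph{next} to it in the bundle order, not a chosen $G_1$; with three or more guts (and possibly several per layer, since $z$ need not be a vertex) you cannot simply ``place $G_0$ and $G_1$ in adjacent layers''. This bookkeeping is exactly what Lemma~\ref{lem:atleast3} already packages, so the shortest fix to your write-up is to invoke it rather than reinvent it.
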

\begin{proof}
	From the condition, we know that there is an element $z$ in $H_2(M,\partial M)$ such that it vanishes on each boundary component of $M$. So we can pick a facet surface $F(z)$ and have the sutured guts $\Gamma(z)$. Since $F(z)$ is disjoint with $\partial M$, $\partial \Gamma(z)$ contains $\partial M$, i.e., the three sutured tori. Because $\Gamma(z)$ only consists of solid tori and $T\times I$'s, $\Gamma(z)$ contains three $T\times I$'s. Therefore, by Lemma \ref{lem:2TI4ST}, $\Vol(M) \ge 2V_8$.
\end{proof}

\begin{definition}
	Let $M$ be a 3-manifold with toral boundary and $P$ be a union of some boundary components of $M$. We denote $D_P(M)$ as the result of doubling $M$ along $P$ and $D_P(X)$ as the result of doubling $X$ along $P$ for any subset $X$ of $M$. For a second homology class $z$ in $H_2(M,\partial M)$, we can have $D_P(z)$ to be the homology class of $D_P(z)$ in $H_2(D_P(M),\partial D_P(M))$ where $S$ represents $z$ in $H_2(M,\partial M)$.
	
	Specifically, when $P=\partial M$, we use $D(M)$, $D(X)$ and $D(z)$ by omitting $P$. 
\end{definition}

\begin{definition}\label{def:doubledguts}
	Let $M$ be a compact orientable 3-manifold with toral boundary, and $z$ be an element in $H_2(M,\partial M)$. Let $\Gamma(z)$ be the sutured guts of $z$. We cut along a maximal collection of non-parallel horizontal surfaces in $D(\Gamma(z))$ and then a maximal product annuli and tori. We call the \emph{doubled guts} of $z$ as the non-product components of the resulting sutured manifold, i.e., the \emph{horizontally prime guts} (see \cite[Definition 3.19]{AZ1}) of $D(\Gamma(z))$.
\end{definition}

\begin{lemma} \label{lem:thurstonnormdouble}
	Let $M$ be a compact orientable 3-manifold with toral boundary, and $P$ be a union of some boundary components of $M$. Let $z$ be an element in $H_2(M,\partial M)$. If $S$ is a norm-minimizing surface representing $z$, then $D_P(S)$ is also norm-minimizing.
\end{lemma}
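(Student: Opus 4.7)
The upper bound $x_{D_P(M)}(D_P(z)) \le 2\, x_M(z)$ is immediate. Isotope $S$ to be transverse to $P$ so that $\partial S\cap P$ is a disjoint union of circles (Euler characteristic $0$); then $\chi(D_P(S))=2\chi(S)$, and norm-minimality of $S$ in the irreducible, $\partial$-irreducible $M$ forces $S$ to have no disk or sphere components, yielding $\chi^-(D_P(S))=2\chi^-(S)=2\,x_M(z)$.

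For the reverse inequality, let $R'\subset D_P(M)$ be a norm-minimizing representative of $D_P(z)$, isotoped transverse to $P$. Using incompressibility of $R'$ (which follows from norm-minimality) together with the fact that $P$ is a union of tori, minimize $|R'\cap P|$ so that every circle of intersection is essential in $P$. Split $R'$ along $P$ to get $R'_i:=R'\cap M_i$ for $i=1,2$, where $M_1,M_2$ are the two copies of $M$ in $D_P(M)$. A standard innermost-disk argument using $\partial$-irreducibility of $M$ lets us further arrange each $R'_i$ to have no disk or sphere components, without changing $[R']$ or $\chi^-(R')$.

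Under the identifications $M_i\cong M$ and with signs $\epsilon_i\in\{\pm 1\}$ dictated by the orientations of the $M_i$ inside $D_P(M)$, the key claim is that $\epsilon_1[R'_1]+\epsilon_2[R'_2]=2z$ in $H_2(M,\partial M)$. Granted this, symmetry of the Thurston norm under $z\mapsto -z$, together with subadditivity and homogeneity, give
\[ 2\,x_M(z)=x_M(2z)\le x_M([R'_1])+x_M([R'_2])\le \chi^-(R'_1)+\chi^-(R'_2). \]
Since $R'\cap P$ consists of circles, $\chi(R')=\chi(R'_1)+\chi(R'_2)$; combined with the absence of disk and sphere components, $\chi^-(R'_1)+\chi^-(R'_2)=-\chi(R')=\chi^-(R')$, so $\chi^-(R')\ge 2\,x_M(z)$, completing the proof.

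The main obstacle is verifying the homology identity $\epsilon_1[R'_1]+\epsilon_2[R'_2]=2z$. The natural approach uses the orientation-reversing involution $\tau$ on $D_P(M)$ swapping $M_1$ and $M_2$: since $\tau$ fixes the set $D_P(S)$ but reverses the ambient orientation, $\tau_*[D_P(S)]=-[D_P(S)]$, so $D_P(z)$ lies in the $(-1)$-eigenspace of $\tau_*$. A Mayer--Vietoris computation for the decomposition $D_P(M)=M_1\cup_P M_2$ identifies this eigenspace with $H_2(M,\partial M)$ via a map sending $[R']\mapsto \epsilon_1[R'_1]+\epsilon_2[R'_2]$ and $[D_P(S)]\mapsto 2z$, which pins down the claim. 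Carefully tracking the orientations of the inclusions $M_i\hookrightarrow D_P(M)$ to determine the signs $\epsilon_i$ is the delicate technical step.
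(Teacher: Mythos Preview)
Your argument is correct, but it takes a longer route than the paper's. Both proofs cut a representative $U$ of $D_P(z)$ along $P$ into pieces $U\cap M$ and $U\cap M'$; the difference is in the homology step. You only claim that the \emph{sum} of the two pieces represents $2z$, and then invoke subadditivity of the Thurston norm. The paper instead observes the stronger fact that each piece \emph{individually} represents $z$ in $H_2(M,\partial M)$: this follows immediately from the excision isomorphism $H_2(M,\partial M)\cong H_2(D_P(M), M'\cup\partial D_P(M))$, under which the natural map from $H_2(D_P(M),\partial D_P(M))$ is realized geometrically by $[U]\mapsto[U\cap M]$ and sends $D_P(z)\mapsto z$. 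From there the paper finishes in one line: if $\chi_-(U)<2x(z)$ then one of $\chi_-(U\cap M)$, $\chi_-(U\cap M')$ is below $x(z)$, contradicting norm-minimality of $S$.

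Your Mayer--Vietoris/involution computation is valid and recovers the weaker identity you need, but the excision observation makes the orientation-tracking you flag as ``the delicate technical step'' disappear entirely. Also, the irreducibility and $\partial$-irreducibility hypotheses you invoke are not part of the lemma's statement; they are not actually needed, since $\chi^-(D_P(S))=2\chi^-(S)$ holds component-by-component regardless of whether $S$ has disk or sphere pieces, and a norm-minimizing $R'$ can always be replaced by an incompressible one in the same class without increasing $\chi^-$.
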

\begin{proof}
	Suppose $D_P(S)$ is not norm-minimizing. Then we have a surface $U$ representing $D_P((z)$ in $D_P(M)$ such that $\chi_-(U) < \chi_-(D_P(S)) = 2 x(z)$. We let $D_P(M)$ be $M \cup M'$ where $M'$ is a mirrored copy of $M$. Then either $\chi_-(U\cap M)$ or $\chi_-(U\cap M')$ is smaller than $x(z)$. Note that $U\cap M$ and the mirrored copy of $U\cap M'$ in $M$ represent $z$ in $H_2(M,\partial M)$. This contradicts the definition of Thurston norm.
\end{proof}

\begin{lemma}\label{lem:2defdoubledguts}
	Let $M$ be an orientable hyperbolic 3-manifold of finite volume such that every element in $H_2(M,\partial M)$ is a libroid class. Let $z$ be an element in $H_2(M,\partial M)$ and $P$ be a boundary component of $M$. Then the horizontally prime guts of $D_P(\Gamma(z))$ is equivalent to the guts of $D_P(z)$ in $D_P(M)$.
\end{lemma}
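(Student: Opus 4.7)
The plan is to exhibit both sides from a common facet surface and show that doubling commutes with the operations used to define the two kinds of guts.

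First I would pick a facet surface $F$ representing a multiple of $z$ in $M$; by the libroid assumption $M\spl F$ decomposes along a union of characteristic annuli and tori as $\Gamma(z)\cup W$, where $W$ is a product sutured manifold. Lemma~\ref{lem:thurstonnormdouble} guarantees that $D_P(F)$ is norm-minimizing in $D_P(M)$ representing the corresponding multiple of $D_P(z)$. One needs to verify that $D_P(F)$ is an admissible facet surface for $D_P(z)$: put $F$ in general position with $P$ so that $\partial F \cap P$ is a collection of essential simple closed curves on $P$, and then check that the doubled surface is incompressible, using that $F$ is incompressible in $M$ and that any compressing disk for $D_P(F)$ could be put into standard position with respect to $P$ and split into compressing pieces for $F$.

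Next I would invoke that cutting commutes with doubling, namely
\[
D_P(M)\spl D_P(F)\;\cong\;D_P\bigl(M\spl F\bigr)\;=\;D_P(\Gamma(z))\,\cup\,D_P(W),
\]
glued along the doubles of the interface annuli and tori. The key observation is that $D_P(W)$ is still a product sutured manifold: each component of $W$ is of the form $R\times I$ whose intersection with $P$ sits inside the sutured boundary as a collection of annuli, so doubling along that sub-annular locus yields $D_{P\cap R}(R)\times I$, again a product. Hence when one computes the horizontally prime non-product guts of the cut-open doubled manifold, every $D_P(W)$ summand is absorbed and the answer is the horizontally prime guts of $D_P(\Gamma(z))$.

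Finally I would match definitions: by Definition~\ref{def:doubledguts} the horizontally prime guts of $D_P(\Gamma(z))$ is by convention the left-hand side of the claimed equality, while by the definition of sutured guts applied to the facet surface $D_P(F)$ in $D_P(M)$ it equals the guts of $D_P(z)$, the right-hand side. The main obstacle I anticipate is step one — checking that $D_P(F)$ is genuinely a facet surface. In particular, making sure that after isotoping $\partial F \cap P$ to an essential collection, the doubled surface realizes the Thurston norm of $D_P(z)$ (handled by Lemma~\ref{lem:thurstonnormdouble}) and remains incompressible, and that no previously non-product piece of $\Gamma(z)$ acquires a compressing disk after doubling. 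The libroid hypothesis controls this: the characteristic annuli that can meet $P$ come only from $T\times I$ and solid-torus components of $\Gamma(z)$, so their interaction with $P$ under doubling is explicit, and any new essential annulus or horizontal surface in the doubled object must lie inside the doubled product part $D_P(W)$, where it can be absorbed into the product structure without changing the horizontally prime guts.
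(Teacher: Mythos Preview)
Your overall strategy---double a facet surface, observe that the doubled window is still a product, and read off the guts---is the same as the paper's, but there is a genuine gap at exactly the point you flagged as the main obstacle: $D_P(F)$ is \emph{not} a facet surface for $D_P(z)$ in general, and your proposed justification for why it would be is incorrect.

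The problem is your claim that ``any new essential annulus or horizontal surface in the doubled object must lie inside the doubled product part $D_P(W)$.'' This is false. Suppose $G$ is a $T\times I$ component of $\Gamma(z)$ whose toral suture lies on $P$. Then $D_P(G)$ is a $T\times I$ with two parallel annular sutures on \emph{each} boundary torus, and this sutured manifold admits a horizontal annulus (not parallel to $R_\pm$) splitting it into two $4$-STs. That horizontal surface lives entirely inside $D_P(\Gamma(z))$, not in $D_P(W)$, so $D_P(F)$ fails to be maximal. More generally, for each component $G$ of $\Gamma(z)$ the doubled piece $D_P(G)$ is not horizontally prime, and the missing horizontal surfaces are precisely the ones that witness this.

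The paper's proof closes this gap by explicitly building those extra surfaces. For each layer $N$ of $M\spl F(z)$ with guts component $G$, it constructs a taut surface $S'_N$ inside $D_P(G)$ decomposing it into its horizontally prime pieces (two $4$-STs when $G$ is a $T\times I$, two copies of $G$ when $G$ is a $2$-ST or $4$-ST), extends $S'_N$ through the product part to a surface $S_N\subset D_P(N)$ representing the class, and then takes $D_P(F(z))\cup\bigcup_N S_N$ as the facet surface for $D_P(z)$. Only after this enlargement does the guts computation match the horizontally prime guts of $D_P(\Gamma(z))$. Your argument can be repaired by inserting this step in place of the claim that $D_P(F)$ is already a facet surface.
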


\begin{proof}

Note that $P$ is the only essential torus in $D_P(M)$ and it is null-homologous. Hence the Thurston norm of $D_P(M)$ is non-degenerate. Therefore by \cite[Theorem 1.1]{AZ1}, the guts of $D_P(z)$ in $D_P(M)$ is well-defined.

 Let $F(z)$ be a facet surface representing $z$. Then $D_P(F(z))$ will be a collection of norm-minimizing surfaces representing $D_P(z)$ in $D_P(M)$. Since each component of $M \spl F(z)$ contains one component of $\Gamma(z)$ by Lemma \ref{lem:label}, each component of $D_P(M) \spl D_P(F(z))$ contains the double of one component of $\Gamma(z)$. %Hence we can take a facet surface $F(D_P(z))$ representing $D_P(z)$ and containing $D_P(F(z))$. 

Let $G$ be a component of $\Gamma(z)$ and $N$ be the component of $M \spl F(z)$ containing $G$. 

If $G$ is a $T\times I$ with 2 sutures on a boundary component and a sutured toral boundary, $D_P(G)$ is a $T\times I$ with 2 isotopic sutures on each boundary component. We take a taut surface $S'_N$ homologous to the $R_+$ part of $D_P(G)$ such that it decomposes $D_P(G)$ into two 4-ST's. If $G$ is a 4-ST or a 2-ST, we can also have $S'_N$ that decomposes $D_P(G)$ into two 4-ST's or 2-ST's. Then the horizontally prime guts of $D_P(G)$ is the union of these two solid tori. By extending $S'_N$ to the product sutured manifold $D_P(N)\backslash D_P(G)$, we have a taut surface $S_N$ which represents $z$. Furthermore, $\{S_N\}$ is a maximal collection of taut surfaces representing $z$ in $D_P(N)$ and non-isotopic to $R_\pm(N)$.

The union of $D_P(F(z))$ and $S_N$ for each component $N$ of $M \spl F(z)$ is a facet surface $F(D_P(z))$ representing $D_P(z)$ in $D_P(M)$. Since the guts of $F(D_P(z))$ is equivalent to the horizontally prime guts of $D_P(\Gamma(z))$ , we know the lemma is true.

\end{proof}

\begin{lemma}\label{lem:invariantdoubledguts}
	Let $M$ be an orientable hyperbolic 3-manifold of finite volume such that every element in $H_2(M,\partial M)$ is a libroid class. Let $y,z$ be in an open Thurston cone $\Delta$ of $M$ such that $y$ vanishes on exactly one boundary component of $M$. Then there is a $w$ in the open segment $(y,z)$ such that the doubled guts of $y$ is equivalent to the doubled guts of $u$ in $D(M)$ for every element $u$ in $[y,w]$.
	
	\end{lemma}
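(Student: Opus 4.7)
\smallskip

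\textbf{Proof plan.} The plan is to localize the difference between $\Gamma(y)$ and $\Gamma(u)$ near the unique boundary component $P$ of $M$ on which $y$ vanishes, and then to show that this local difference is killed after doubling and passing to horizontally prime guts. Vanishing on a fixed boundary component is a linear condition on $H_2(M,\partial M)$, and $\Delta$ is open in $H_2(M,\partial M;\R)$, so I will first choose $w\in(y,z)$ close enough to $y$ so that the closed segment $[y,w]$ lies in $\Delta$ and every element of $(y,w]$ fails to vanish on each of the three boundary components of $M$ (in particular it does not vanish on $P$, while $y$ itself vanishes only on $P$).

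Next I would compare $\Gamma(y)$ and $\Gamma(u)$ explicitly. Because $y$ vanishes on $P$, any facet surface $F(y)$ is disjoint from $P$, and Proposition \ref{prop:gutcomponentinhyperbolic} forces $\Gamma(y)$ to contain a $T\times I$ component $C$ whose toral suture is $P$ and whose two annular sutures lie on $F(y)$. Write $\Gamma_0:=\Gamma(y)\smallsetminus C$. For $u\in(y,w]$, I would use linearity of the Thurston norm on $\Delta$ together with the libroid hypothesis to build a facet surface $F(u)$ that agrees with $F(y)$ outside a small collar of $P$ and meets $P$ in essential curves recording $\partial u|_P$; concretely, $F(u)$ is obtained from $F(y)$ by cut-and-paste with an annulus having one boundary curve on $P$. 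Then $\Gamma(u)=\Gamma_0\sqcup C'_u$, where $C'_u$ is a union of solid-torus guts components (2-STs or 4-STs, by Proposition \ref{prop:gutcomponentinhyperbolic} and the nontriviality of $\partial u|_P$) whose sutures include the new essential curves on $P$.

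Then I would carry out the doubling. We get $D(\Gamma(y))=D(C)\cup D(\Gamma_0)$ and $D(\Gamma(u))=D(C'_u)\cup D(\Gamma_0)$, glued along the interfaces coming from the portions of boundary shared with $\Gamma_0$. The horizontally prime guts of $D(\Gamma_0)$ is common to both; so it suffices to show $D(C)$ and $D(C'_u)$ each have empty horizontally prime guts. Both admit an obvious horizontal decomposition along annuli coming from the doubled $R_\pm$: $D(C)$ is a neighborhood of a torus glued to itself along two annuli and is cut by a horizontal annulus into product sutured pieces, and $D(C'_u)$ is a union of doubled solid tori which similarly splits along horizontal annuli into product pieces. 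Thus the horizontally prime guts of $D(\Gamma(y))$ and $D(\Gamma(u))$ both reduce to that of $D(\Gamma_0)$, proving the claimed equivalence for every $u\in[y,w]$.

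The main obstacle I expect is the local comparison in the second step: justifying that $F(u)$ can indeed be chosen to agree with $F(y)$ outside a collar of $P$, and that the resulting new guts component $C'_u$ is exactly the solid-torus piece described above. This requires a careful continuity argument for norm-minimizing surfaces under small perturbation in $\Delta$, using linearity of the Thurston norm on the cone $\Delta$ and the libroid hypothesis to exclude acylindrical components from appearing near $P$; the rest of the argument is then a direct verification using Proposition \ref{prop:gutcomponentinhyperbolic} and the explicit description of doubled $T\times I$ and doubled 2-ST/4-ST pieces.
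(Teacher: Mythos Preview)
Your plan has both a concrete error and a real gap. The error is in step~3: the horizontally prime guts of $D(C)$ is \emph{not} empty. The only part of $\partial C$ lying on $\partial M$ is the toral suture $P$ (the two annular sutures lie on $F(y)\subset\inter(M)$), so $D(C)=C\cup_P\bar C$ is $T^2\times[-1,1]$ with two parallel annular sutures on each boundary torus; as computed in the proof of Lemma~\ref{lem:2defdoubledguts} and used again in the proof of Theorem~\ref{thm:volumeofbook}, a maximal horizontal decomposition of this yields two $4$-STs, not product pieces. Doubled solid-torus guts likewise survive (two copies of the original $2$-ST or $4$-ST). So the doubled guts of $y$ and of $u$ do not simply reduce to that of a common $D(\Gamma_0)$; the contributions near $P$ persist and would have to be matched in $D(M)$, which is precisely the nontrivial content of the lemma. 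The gap is the one you flag yourself in step~2: there is no continuity principle for facet surfaces that forces $\Gamma(u)=\Gamma_0\sqcup C'_u$ with $\Gamma_0$ unchanged, since a facet surface is a \emph{maximal} collection of nonparallel taut representatives and perturbing the class can in principle alter this collection away from $P$.

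The paper avoids both issues by passing to the partial double $D_P(M)$ rather than comparing $\Gamma(y)$ and $\Gamma(u)$ directly. In $D_P(M)$ the torus $P$ is the unique essential torus and is null-homologous, so the Thurston norm is nondegenerate; linearity on $\Delta$ then places $D_P(y)$ and $D_P(u)$ in a common open cone of $D_P(M)$, and \cite[Theorem~1.2]{AZ1} gives equivalence of their sutured guts in $D_P(M)$. Lemma~\ref{lem:2defdoubledguts} identifies these with the horizontally prime guts of $D_P(\Gamma(y))$ and $D_P(\Gamma(u))$. Finally, since $w$ is chosen so that neither $y$ nor any $u\in[y,w]$ vanishes on the remaining boundary tori, doubling along those tori introduces no new horizontal surfaces, and the full doubled guts in $D(M)$ agree.
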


\begin{proof}
Let $y$ vanish on a boundary component $P$. We take $w$ small enough so that every element in $[w,y]$ does not vanish on any boundary component other than $P$.

%Denote $D_P(u)$ to be the homology class of $D_P(S)$ in $H_2(D_P(M),\partial D_P(M))$ where $S$ represents $u$ in $H_2(M,\partial M)$. 
Note that $P$ is the only essential torus in $D_P(M)$ and it is null-homologous. Hence the Thurston norm of $D_P(M)$ is non-degenerate. Since $y$ and $w$ are in the same open Thurston cone, there is a class $w'$ such that we have $\chi_-(ky) = \chi_-(w')+\chi_-(w)$ and $w' = ky-w$ for some positive number $k$. Therefore $\chi_-(kD(y)) = \chi_-(D(v))+\chi_-(D(w))$. By Lemma \ref{lem:thurstonnormdouble}, we know that $D_P(y)$ and $D_P(w)$ are both in an open Thurston cone of $D_P(M)$. Besides, there is an open segment $(a, b)$ containing $D(y)$ and $D(w)$ in the open Thurston cone such that the restrictions of $a$ and $b$ on each boundary component are not in opposite orientations. By \cite[Theorem 1.2]{AZ1}, the guts of $D_P(y)$ is equivalent to the guts of $D_P(u)$ in $D_P(M)$ for every element $u$ in $[y,w]$.

By Lemma \ref{lem:2defdoubledguts}, we know that the horizontally prime guts of $D_P(\Gamma(y))$ is equivalent to the horizontally prime guts of $D_P(\Gamma(u))$. Let $S_P(y)$ and $S_P(u)$ be the union of a maximal collection of non-parallel horizontal surfaces in $D_P(\Gamma(y))$ and $D_P(\Gamma(z))$, respectively. By gluing themselves along boundary components other than $P$, we have a union of non-parallel horizontal surfaces $S(y)$ and $S(u)$ in $D(\Gamma(y))$ and $D(\Gamma(z))$, respectively. Because $y$ and $u$ do not vanish on any boundary component other than $P$, we know that each $S(y)$ and $S(u)$ is the union of a maximal collection of non-parallel horizontal surfaces. Therefore the horizontally prime guts of $D(\Gamma(y))$ is equivalent to the horizontally prime guts of $D(\Gamma(u))$.

\end{proof}

By Lemma \ref{lem:invariantdoubledguts} and \cite[Theorem 1.2]{AZ1}, the following theorem holds.

\begin{theorem}\label{thm:invariancedoubledguts} 

		Let $M$ be an orientable hyperbolic 3-manifold of finite volume such that every element in $H_2(M,\partial M)$ is a libroid class and $\Delta$ be an open Thurston cone. If no element in $\Delta$ vanishes on two boundary component, then for two classes $y,z$ in $\Delta$, the doubled guts of $y$ is equivalent to the doubled guts of $z$ in $D(M)$.

\end{theorem}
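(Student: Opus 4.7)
The plan is to show that the doubled guts is constant along any line segment $[y,z]\subset \Delta$ connecting two classes $y,z\in\Delta$. Since $\Delta$ is an open convex cone, such a segment lies entirely inside $\Delta$, so constancy along every segment implies the equivalence of doubled guts for $y$ and $z$.

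First I would analyze where the segment meets the ``vanishing loci''. For each boundary component $P_i$ of $M$, the set of classes in $H_2(M,\partial M;\mathbb{R})$ that vanish on $P_i$ is a linear subspace, and hence meets $[y,z]$ either in the empty set, in a single point, or in the whole segment. The hypothesis that no class in $\Delta$ vanishes on two boundary components forbids two distinct vanishing loci from simultaneously containing the segment, and moreover forces the points at which the segment meets different vanishing loci to be distinct. This yields a finite partition $y=y_0, y_1,\dots, y_k=z$ of $[y,z]$ such that each interior point $y_j$ vanishes on exactly one boundary component, while every class in each open sub-segment $(y_{j-1},y_j)$ vanishes on no boundary component.

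On each open sub-segment $(y_{j-1},y_j)$, all classes are generic, so the sutured guts $\Gamma(u)$ is constant by \cite[Theorem 1.2]{AZ1}, and by Lemma \ref{lem:2defdoubledguts} the doubled guts is the horizontally prime guts of $D(\Gamma(u))$, which is therefore also constant on $(y_{j-1},y_j)$. At each interior partition point $y_j$, I would apply Lemma \ref{lem:invariantdoubledguts} in both directions, once with the auxiliary class chosen in $(y_j,y_{j+1})$ and once with it chosen in $(y_{j-1},y_j)$; each application produces a sub-interval containing $y_j$ on which the doubled guts agrees with that of $y_j$. Combined with the constancy on each open sub-segment, this bridges the doubled guts across $y_j$. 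Iterating over all partition points and handling the endpoints $y,z$ (which may themselves be vanishing, requiring only a one-sided application of Lemma \ref{lem:invariantdoubledguts}) shows that the doubled guts is constant on $[y,z]$.

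The main obstacle I anticipate is bookkeeping rather than conceptual: one must verify that the vanishing loci really do cut $[y,z]$ into only finitely many pieces (ruling out a case where the entire segment lies on a vanishing locus by exploiting the no-two-boundaries hypothesis), and one must confirm that Lemma \ref{lem:invariantdoubledguts} applies symmetrically on both sides of a vanishing point — the lemma is stated with a fixed ``target'' direction $y\to z$, so bilateral applicability is obtained by choosing the auxiliary class on either side. Once these are in hand the proof is a clean concatenation of \cite[Theorem 1.2]{AZ1} on generic pieces with Lemma \ref{lem:invariantdoubledguts} at the finitely many vanishing partition points.
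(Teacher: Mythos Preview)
Your proposal is correct and follows essentially the same route as the paper's proof: partition a segment through $y$ and $z$ at the finitely many points where classes vanish on some boundary component, invoke \cite[Theorem 1.2]{AZ1} on the open sub-segments to get constancy of $\Gamma$ (hence of its double's horizontally prime guts), and bridge across each partition point with Lemma \ref{lem:invariantdoubledguts} applied in both directions. Two small corrections: the identification of the doubled guts with the horizontally prime guts of $D(\Gamma(u))$ is Definition \ref{def:doubledguts} rather than Lemma \ref{lem:2defdoubledguts}, and the case where $[y,z]$ lies entirely on a single vanishing locus is \emph{not} ruled out by the hypothesis (only simultaneous vanishing on two components is), but in that case no other vanishing locus can meet the segment and \cite[Theorem 1.2]{AZ1} applies directly to the whole segment---the paper sidesteps both the endpoint issue and this edge case by working with an open segment $(v,w)\supset\{y,z\}$ inside $\Delta$.
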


\begin{proof}
	Let $(v,w)$ be an open segment containing $y,z$ in $\Delta$. We divide $(v,w)$ by finitely many points such that each resulting open component does not contain two elements whose restrictions on a boundary component are in opposite orientations. By \cite[Theorem 1.2]{AZ1}, guts are invariant for second homology classes in each open segment. By Lemma \ref{lem:invariantdoubledguts}, we know that each point on $(v,w)$ has a neighborhood in $(v,w)$ such that double guts for second homology classes in this neighborhood are invariant. Combining these, we know that Theorem \ref{thm:invariancedoubledguts} is true.
\end{proof}

Now we start proving Theorem \ref{thm:volumeofbook}.

\begin{proof}[Proof of Theorem \ref{thm:volumeofbook}]
	By the well-orderness the volumes of hyperbolic 3-manifolds (see \cite[Theorem 5.12.1]{thurston1979geometry}), there is an orientable hyperbolic 3-cusped manifold $\Mb$ satisfying the condition in Theorem \ref{thm:volumeofbook} and having the minimal volume among all such 3-manifolds. Suppose $\Vol(\Mb) < 2 V_8$.

 There is a long exact sequence for $(\Mb, \partial \Mb)$:

\[
\cdots \to H_2(\Mb) \to H_2(\Mb,\partial \Mb) \stackrel{\partial}{\to} H_1(\partial \Mb) \to \cdots
\]

By Lemma \ref{lem:nononseparating}, we know that there is no non-separating closed surface in $\Mb$ and hence the map from $H_2(\Mb)$ to $H_2(\Mb, \partial \Mb)$ is 0. Therefore the map $\partial$ is injective. Because the image of $\partial$ is of rank 3 by Lemma \ref{lem:halflives}, $H_2(\Mb,\partial \Mb)$ is 3-dimensional. Since the Betti number of $\Mb$ is 3 and, for each boundary component $P$, $H_1(P) = \Z^2$, there is an element $z$ in $H_2(\Mb,\partial \Mb)$ such that $\partial z = 0$ in the $H_1(P)$ summand of $H_1(\partial \Mb)$, i.e., $z$ vanishes on $P$.

If there is a $z$ in $H_2(\Mb,\partial \Mb)$ vanishing on two boundary components, the sutured guts $\Gamma(z)$ contains 2 $T\times I$'s. By Lemmma \ref{lem:2TI4ST}, $\Vol(\Mb) \ge 2V_8$. So we assume there is no such class.

Let $\partial_1 \Mb,\partial_2 \Mb,\partial_3 \Mb$ be the components of $\partial \Mb$ and denote $y_i$ as a class in $H_2(\Mb,\partial \Mb)$ vanishing on $\partial_i \Mb$ for $1 \le i \le 3$. Then $\Gamma(y_i)$ contains a $T\times I$.

If $y_i$ is a vertex of the Thurston sphere, by Lemma \ref{lem:nonhomologous}, there exists two sutured guts component $G_0$ and $G_1$ of $\Gamma(y_i)$ such that  any components of $s(G_0)$ and $s(G_1)$ are homologically nontrivial and a component of $s(G_0)$ is not homologous to any multiple of a component of $s(G_1)$. Since $\Vol(\Mb) < 2V_8$, by Lemma \ref{lem:atleast3}(3), we can divide $\Gamma(y_i)$ into two sets $\Gamma_0$ and $\Gamma_1$ such that the sutures of all sutured guts components in each set are homologous up to multiplicity. By the property of $G_0$ and $G_1$, we can assume that $\Gamma_0$ contains $G_0$ and $\Gamma_1$ contains $G_1$. By Lemma \ref{lem:atleast3}(2), there is only one $T\times I$ component in $\Gamma(y_i)$. Let the $T\times I$ be in $\Gamma_0$. By Lemma \ref{lem:sequence}, we can take $u$ in an edge of the Thurston sphere with $y_i$ as an endpoint and $v$ in a face meeting the edge such that $\Gamma(y_i)$ has a sequence of 2 sutured decomposition
\[
	\Gamma(y_i) \stackrel{\Sigma_1}{\overline \leadsto} \Gamma(u)  \stackrel{\Sigma_2}{\overline \leadsto} \Gamma(v).
	\]
where $\Sigma_1$ and $\Sigma_2$ are properly norm-minimizing surfaces in $M$.

As in the proof of Lemma \ref{lem:suturenontrivial}, we can choose $e$ such that $\Sigma_1$ intersects a component of $s(\Gamma_1)$. Since the sutures of all sutured guts components of $\Gamma_1$ are homologous up to multiplicity, $\Sigma_1$ decomposes $\Gamma_1$ to a union of product sutured manifolds. Because $\Gamma(u)  \stackrel{\Sigma_2}{\overline \leadsto} \Gamma(v)$ is a nontrivial sutured decomposition, $\Sigma_1$ does not intersect a component of $s(\Gamma_0)$. Hence $\Sigma_1$ either avoids the $T\times I$ or meets the boundary parallel to the sutures of $T\times I$. If $\Sigma_1$ meets the boundary parallel to the sutures of $T\times I$, $\Sigma_1$ decompose $T\times I$ to a 4-ST with sutures isotopic to a curve $C_i$ on $\partial_i M$.

 This means that for each boundary component $\partial_i \Mb$, no matter whether $y_i$ is a vertex of the Thurston sphere or in an open edge or face, there is an element $z_i$ in $H_2(\Mb,\partial \Mb)$ such that $z_i$ is in an open edge or face and $\Gamma(z_i)$ contains a sutured guts component as a $T\times I$ or 4-ST with sutures isotopic to a curve $C_i$ on $\partial_i M$.

Since the double of a $T\times I$ with 2 sutures on a boundary component is a $T\times I$ with 2 parallel sutures on each boundary component, by cutting it along a maximal collection of horizontal surfaces, we have two 4-STs. So the horizontally prime guts of the double of a $T \times I$ is a union of two 4-STs. As for a 4-ST or 2-ST, the horizontally prime guts of the double of it is a union of two copies of itself. Therefore if the doubled guts of $u$ in $H_2(M,\partial M)$ contains a 4-ST with sutures isotopic to a curve $C_i$ on $\partial_i M$, we know that the guts $\Gamma(u)$ contain a $T\times I$ or a 4-ST with sutures isotopic to a curve $C_i$ on $\partial_i M$.

Suppose $z_i$ is in an open edge $e$ meeting a vertex $v$. By Theorem \ref{thm:invariancedoubledguts}, we know that the doubled guts of elements in an open Thurston cone is invariant up to equivalence in $D(M)$. So the doubled guts of every element in $e$ contains a 4-ST and hence its sutured guts contains a $T\times I$ or 4-ST, denoted as $G$. Moreover $C_i$ is isotopic to each suture of the $T\times I$ or 4-ST.

By Lemma \ref{lem:sequence}, this means the sutured guts of $v$ also contains a $T\times I$ or 4-ST with sutures isotopic to $C_i$. Take a $kv -z_i$ so that $kv -z_i$ is in an open edge $e'$ meeting $v$ and $\Gamma(v)$ can be decomposed by a surface $\Sigma$ to have $\Gamma(kv -z_i)$. Because $z_i$ and $v$ do not intersect $C_i$ homologically, we know $kv -z_i$ also does not intersect $C_i$ homologically. Therefore $\Gamma(kv -z_i)$ has a component as a $T\times I$ or 4-ST with sutures isotopic to a curve $C_i$. By Theorem \ref{thm:invariancedoubledguts}, we know that the sutured guts of every element in $e'$ contains a $T\times I$ or 4-ST with sutures isotopic to $C_i$. 

Keep doing this, we know that there exists a 2-dimensional subspace $V_i$ passing through $z_i$ and $v$ such that for each nonzero element $u$ in $V_i$, $\Gamma(u)$ contains a $T\times I$ or a 4-ST with sutures isotopic to $C_i$.

If $z_i$ is in an open face, we can apply the same argument for any 2-dimensional subspace $V_i$ passing through $z_i$. We pick a $V_i$ for $z_i$.

Since $V_1$ and $V_2$ are 2-dimensional subspaces and $H_2(\Mb, \partial \Mb)$ is a 3-dimensional space, $V_1\cap V_2$ is at least 1-dimensional. Let $z$ be a nonzero element in $V_1\cap V_2$. Then $\Gamma(z)$ contains $G_i$ which is a $T\times I$ or 4-ST such that $C_i$ is isotopic to each suture of $G_i$ for $i=1,2$. Because $\Mb$ is acylindrical, $C_1$ is not isotopic to $C_2$ and hence $G_1$ is not the same as $G_2$. We can pick a vertex $v$ such that the open edge or face containing $z$ meets $v$. Then by Lemma \ref{lem:sequence}, $\Gamma(v)$ contains $G_1,G_2$ and is at least depth 2. Note that the sutures of $G_1$ and $G_2$ are not isotopic. Then by Theorem \ref{thm:2TIor4ST}, the volume of $\Mb$ is at least $1.5 V_8$.
\end{proof}

\bibliographystyle{amsalpha}
%\nocite{*}
\bibliography{minimal_volume}

\newcommand{\etalchar}[1]{$^{#1}$}
\providecommand{\bysame}{\leavevmode\hbox to3em{\hrulefill}\thinspace}
\providecommand{\MR}{\relax\ifhmode\unskip\space\fi MR }
% \MRhref is called by the amsart/book/proc definition of \MR.
\providecommand{\MRhref}[2]{%
  \href{http://www.ams.org/mathscinet-getitem?mr=#1}{#2}
}
\providecommand{\href}[2]{#2}
\begin{thebibliography}{ADST05}

\bibitem[AD15]{agol2015certifying}
Ian Agol and Nathan~M Dunfield, \emph{Certifying the thurston norm via sl (2,
  c)-twisted homology}, arXiv preprint arXiv:1501.02136 (2015).

\bibitem[ADST05]{agol2005lower}
Ian Agol, Nathan~M Dunfield, Peter~A Storm, and William~P Thurston, \emph{Lower
  bounds on volumes of hyperbolic haken 3-manifolds}, arXiv preprint
  math/0506338 (2005).

\bibitem[Ago10]{agol2010minimal}
Ian Agol, \emph{The minimal volume orientable hyperbolic 2-cusped 3-manifolds},
  Proceedings of the American Mathematical Society \textbf{138} (2010), no.~10,
  3723--3732.

\bibitem[AZ22]{AZ1}
Ian Agol and Yue Zhang, \emph{Guts in sutured decompositions and the thurston
  norm}, arXiv preprint arXiv:2203.12095 (2022).

\bibitem[CFW10]{calegari2010positivity}
Danny Calegari, Michael Freedman, and Kevin Walker, \emph{Positivity of the
  universal pairing in 3 dimensions}, Journal of the American Mathematical
  Society \textbf{23} (2010), no.~1, 107--188.

\bibitem[CM01]{cao2001orientable}
Chun Cao and G~Robert Meyerhoff, \emph{The orientable cusped hyperbolic
  3-manifolds of minimum volume}, Inventiones mathematicae \textbf{146} (2001),
  no.~3, 451--478.

\bibitem[FK14]{FK}
Stefan Friedl and Takahiro Kitayama, \emph{The virtual fibering theorem for
  3-manifolds}, Enseign. Math. \textbf{60} (2014), no.~1-2, 79--107.
  \MR{3262436}

\bibitem[G{\etalchar{+}}86]{gabai1986genera}
David Gabai et~al., \emph{Genera of the alternating links}, Duke Mathematical
  Journal \textbf{53} (1986), no.~3, 677--681.

\bibitem[Gab83]{Ga1}
David Gabai, \emph{Foliations and the topology of {$3$}-manifolds}, J.
  Differential Geom. \textbf{18} (1983), no.~3, 445--503. \MR{723813}

\bibitem[GMM09]{gabai2009minimum}
David Gabai, Robert Meyerhoff, and Peter Milley, \emph{Minimum volume cusped
  hyperbolic three-manifolds}, Journal of the American Mathematical Society
  \textbf{22} (2009), no.~4, 1157--1215.

\bibitem[Hat]{hatcher2000notes}
Allen Hatcher, \emph{Notes on basic 3-manifold topology}.

\bibitem[Miy94]{miyamoto1994volumes}
Yosuke Miyamoto, \emph{Volumes of hyperbolic manifolds with geodesic boundary},
  Topology \textbf{33} (1994), no.~4, 613--629.

\bibitem[Mor84]{morgan1984chapter}
John~W Morgan, \emph{Chapter v on thurston's uniformization theorem for
  three-dimensional manifolds}, Pure and Applied Mathematics, vol. 112,
  Elsevier, 1984, pp.~37--125.

\bibitem[Thu79]{thurston1979geometry}
William~P Thurston, \emph{The geometry and topology of three-manifolds},
  Princeton University Princeton, NJ, 1979.

\bibitem[Yos13]{yoshida2013minimal}
Ken'ichi Yoshida, \emph{The minimal volume orientable hyperbolic 3-manifold
  with 4 cusps}, Pacific Journal of Mathematics \textbf{266} (2013), no.~2,
  457--476.

\bibitem[Zha20]{zhang2020guts}
Yue Zhang, \emph{Guts, dehn fillings and volumes of hyperbolic manifolds},
  University of California, Berkeley, 2020.

\end{thebibliography}

% \appendix
% \chapter{More Monticello Candidates}

\end{document}